\theoremstyle{plain}
\newtheorem{theorem}{Theorem}[section]
\newtheorem{proposition}[theorem]{Proposition}
\newtheorem{lemma}[theorem]{Lemma}
\newtheorem{corollary}[theorem]{Corollary}
\theoremstyle{definition}
\newtheorem{definition}[theorem]{Definition}
\newtheorem{assumption}[theorem]{Assumption}
\theoremstyle{remark}
\newtheorem{remark}[theorem]{Remark}
\newcommand*\colourcheck[1]{%
	\expandafter\newcommand\csname gcmark\endcsname{\textcolor{#1}{\ding{52}}}%
}
\newcommand*\colourxmark[1]{%
	\expandafter\newcommand\csname rxmark\endcsname{\textcolor{#1}{\ding{55}}}%
}
\definecolor{mygreen}{HTML}{02862a}
\definecolor{myred}{HTML}{9a0000}
\definecolor{linen}{HTML}{FAF0E6} 
\definecolor{darkteal}{RGB}{0, 110, 110}
\newcommand{\algname}[1]{{\sf {\color{darkteal}\footnotesize #1}}\xspace}
\newcommand{\algnamesmall}[1]{{\sf {\color{darkteal}\scriptsize #1}}\xspace}
\begin{document}

\runningtitle{Taming Nonconvex Stochastic Mirror Descent}

\twocolumn[

\aistatstitle{Taming Nonconvex Stochastic Mirror Descent \\ with General Bregman Divergence}

\aistatsauthor{ Ilyas Fatkhullin \And Niao He}

\aistatsaddress{ETH Zürich \And ETH Zürich} 

]

\begin{abstract}
This paper revisits the convergence of Stochastic Mirror Descent (\algname{SMD}) in the contemporary nonconvex optimization setting. Existing results for batch-free nonconvex \algname{SMD} restrict the choice of the distance generating function (DGF) to be differentiable with Lipschitz continuous gradients, thereby excluding important setups such as Shannon entropy. In this work, we present a new convergence analysis of nonconvex \algname{SMD} supporting general DGF, that overcomes the above limitations and relies solely on the standard assumptions. Moreover, our convergence is established with respect to the Bregman Forward-Backward envelope, which is a stronger measure than the commonly used squared norm of gradient mapping. We further extend our results to guarantee high probability convergence under sub-Gaussian noise and global convergence under the generalized Bregman Proximal Polyak-{\L}ojasiewicz condition. Additionally, we illustrate the advantages of our improved \algname{SMD} theory in various nonconvex machine learning tasks by harnessing nonsmooth DGFs. Notably, in the context of nonconvex differentially private (DP) learning, our theory yields a simple algorithm with a (nearly) dimension-independent utility bound. For the problem of training linear neural networks, we develop provably convergent stochastic algorithms. 

\end{abstract}

\section{INTRODUCTION}

We consider stochastic composite optimization
\begin{equation}\label{eq:composite_problem}
	\min_{x\in\cX}  \Exp{f(x, \xi)} + r(x) ,
\end{equation}
where $F(x) := \Exp{f(x, \xi)}$ is differentiable and (possibly) nonconvex, $r(\cdot)$ is convex, proper and lower-semi-continuous, $\cX$ is a closed convex subset of $\R^d$. The random variable (r.v.) $\xi$ is distributed according to an unknown distribution $P$. We denote $\Phi := F + r$ and let $\Phi^* := \inf_{x\in \cX}\Phi(x) > -\infty$. 

A popular algorithm for solving \eqref{eq:composite_problem} is Stochastic Mirror Descent (\algname{SMD}), which has an update rule 
\begin{equation}\label{eq:SMD}
	x_{t+1} = \argmin_{x\in \cX}  \stepsize_t(\langle \nabla f (x_t, \xi_t), x \rangle + r(x) ) + D_{\omega}(x, x_t)  , 
\end{equation}
where $D_{\omega}(x, y) $ is the Bregman divergence between points $x, y \in \cX$ induced by a distance generating function (DGF) $\omega(\cdot)$; see Section~\ref{sec:prelim} for the definitions. When $r(\cdot) = 0$, $\cX = \R^d$ and $\omega(x) = \frac{1}{2}\sqnorm{x}_2$, we have $D_{\omega}(x, y) = \frac{1}{2}\sqnorm{x - y}_2$, and \algname{SMD} reduces to the standard Stochastic Gradient Descent (\algname{SGD}). However, it is often useful to consider more general (non-Euclidean) DGFs.

\algname{SMD} with general DGF was originally proposed in the pioneering work of \citet{nemirovskij_yudin_1979_eff,nemirovskij1983problem}, and later found many fruitful applications \citep{ben2001ordered,shalev2012online,arora2012multiplicative} leveraging nonsmooth instances of DGFs. In the last few decades, \algname{SMD} has been extensively analyzed in the convex setting under various assumption, e.g., \citep{beck2003mirror,lan2012optimal,allen2014linear,birnbaum2011distributed}, including relative smoothness \citep{lu2018relatively,bauschke2017descent,dragomir2021optimal,hanzely2021accelerated} and stochastic optimization \citep{lu2019relative,nazin2019algorithms,zhou2020convergence,hanzely2021fastest,vural2022mirror,liu2023high,nguyen2023improved}. However, despite the vast theoretical progress, convergence analysis of nonconvex \algname{SMD} with general DGF still remains elusive. 

\subsection{Related Work}
We now discuss the related work in the nonconvex stochastic setting. In the unconstrained Euclidean case, \citet{ghadimi2013stochastic} propose the first non-asymptotic analysis of nonconvex \algname{SGD}. Later, \citet{ghadimi2016mini_batch} consider the more general composite problem \eqref{eq:composite_problem} with arbitrary convex $r(\cdot)$, $\cX$, and propose a modified algorithm using large mini-batches. Unfortunately, the use of large mini-batch appears to be crucial in the proof proposed in \citep{ghadimi2016mini_batch} even in Euclidean setting. Later, \citet{davis2019stoch_model_based_WC} address this issue by proposing a different analysis for \algname{Prox-SGD} (method \eqref{eq:SMD} with $\omega(x) = \frac{1}{2} \sqnorm{x}_2)$. Their elegant proof, using the notion of the so-called Moreau envelope, allows them to remove the large batch requirement in the Euclidean setting. However, their analysis crucially relies on the use of Euclidean geometry and appears difficult to extend to the more general nonsmooth DGFs of interest. In particular, the subsequent works \citep{zhang2018convergence,davis2018stochastic_HO_Gr} do consider more general DGF and derive convergence rates for \eqref{eq:SMD}. However, both works assume a smooth DGF to justify their proposed convergence measures, see our Section~\ref{sec:conv_FOSP_exp} for a more detailed comparison. Another line of work uses momentum or variance reduced estimators, e.g., \citep{zhang2021variance,huang2022bregman,fatkhullin2023momentum,ding2023_NC_MSBPG}, but agian their analysis is limited to the Euclidean geometry.

\subsection{Contributions}
\begin{itemize}
	\item In this work, we develop a new convergence analysis for \algname{SMD} under the general assumptions of relative smoothness and bounded variance of stochastic gradients.  Importantly, unlike the prior work, our analysis naturally accommodates general nonsmooth DGFs, including the important case of Shannon entropy. 
	Moreover, our analysis (i) works for any batch size, (ii) does not require the bounded gradients assumption, (iii) supports any closed convex set $\cX$, and (iv) guarantees convergence on a strong stationarity measure -- the Bregman Forward-Backward envelope.
	\item We further demonstrate the flexibility of our proof technique by extending it in two directions. First, we perform a high probability analysis under the sub-Gaussian noise improving upon the previously known rates under weaker assumptions. Next, we establish the global convergence in the function value for \algname{SMD} under the generalized version of the Proximal Polyak-{\L}ojasiewicz condition. In both cases, when specialized to the unconstrained Euclidean setup, our rates can recover the state-of-the-art bounds, up to small absolute constants.  
	
	\item Finally, we demonstrate the importance of our general theory in various machine learning contexts, including differential privacy, policy optimization in reinforcement learning, and training deep linear neural networks. For each of the considered problems, our new \algname{SMD} theory allows us to either improve convergence rates or design provably convergent stochastic algorithms. In all cases, we leverage nonsmooth DGFs to attain the result. 
	
\end{itemize}

\paragraph{Our Techniques.}
The key idea of our analysis is the use of a new Lyapunov function in the form of a weighted sum of the function value $\Phi(\cdot)$ and its Bregman Moreau envelope $\Phi_{1/\rho}(\cdot)$:
 $$
 \lambda_t := \stepsize_{t-1} \rho (\Phi(x_t) - \Phi^* )  + \Phi_{1/\rho}(x_t) - \Phi^*  .  
 $$ 
We recall that the classic analysis of (large batch) \algname{SMD} in \citep{ghadimi2016mini_batch} uses the function value as a Lyapunov function, i.e., $\lambda_{t, 1} := \Phi(x_t) - \Phi^*$. While this approach is very intuitive and matches with analysis in unconstrained case, it seems very difficult to generalize to more general constrained problem \eqref{eq:composite_problem} even in the Euclidean setting. On the other hand, the analysis pioneered in \citep{davis2019stoch_model_based_WC} uses $\lambda_{t , 2} := \Phi_{1/\rho}(x_t) - \Phi^*$ as a Lyapunov function, which does not seem straightforward to extend into non-Euclidean setups, unless the smoothness of DGF is additionally imposed. Our Lyapunov function contains a weighted average of the above two quantities, i.e., $\lambda_t = \stepsize_{t-1} \rho \lambda_{t, 1} + \lambda_{t, 2} $, where $\cb{\stepsize_t}_{t\geq0}$ is the step-size sequence (with $\stepsize_{-1} = \stepsize_0$), $\rho > 0$. This modified Lyapunov function allows to better utilize (relative) smoothness of $F(\cdot)$ in the analysis. Namely, both upper and lower bound inequalities in Assumption~\ref{ass:rel_smooth} will be used in the proof.

\section{PRELIMINARIES}\label{sec:prelim}
We fix an arbitrary norm $\norm{\cdot}$ defined on $\cX \subset \R^d$, and denote by  $\norm{\cdot}_* := \sup_{z: \norm{z}\leq 1} \langle \cdot, z \rangle$ its dual. The Euclidean norm is denoted by $\norm{\cdot}_2$. We denote by $\delta_{\cX}$ the indicator function of a convex set $\cX$, i.e., $\delta_{\cX}(x) = 0$ if $x\in \cX$ and $+\infty$ otherwise. For a closed proper function $\Phi: \mathbb{R}^d \rightarrow \mathbb{R} \cup \{+\infty\}$ with $\textrm{dom}\, \Phi := \cb{x \in \R^d\mid \Phi(x) < +\infty}$, the Fr\'echet subdifferential at a point $x\in \R^d$ is denoted by $\partial \Phi(x)$ and is defined as a set of points $g \in \mathbb{R}^d $ such that $ \Phi(y) \geq \Phi(x)+\langle g, y-x\rangle+o(\|y-x\|), \forall y \in \mathbb{R}^d $ if $x\in \textrm{dom}\, \Phi$. We set $\partial \Phi(x) = \emptyset$ if $x\notin  \textrm{dom}\, \Phi$ \citep{davis2019proximally}.\footnote{When $\Phi = \delta_{\mathcal{X}}$, the function is convex and $\partial \delta_{\mathcal{X}}(x)$ coincides with the usual subdifferential in the convex analysis.} We denote by $\text{cl}(\cX)$ and $\text{ri}(\cX)$ the closure and the relative interior of $\cX$ respectively. 

Let $\cS \subset \R^d$ be an open set and $\omega: \text{cl}(\cS) \rightarrow \R$ be continuously differentiable on $\cS$. Then we say that $\omega(\cdot)$ is a distance generating function (DGF) (with zone $\cS$) if it is $1$-strongly convex w.r.t. $\norm{\cdot}$ on $\text{cl}(\cS)$.
We assume throughout that $\cS$ is chosen such that $\text{ri}(\cX) \subset \cS$ \citep{chen1993convergence}.\footnote{Following \citet{chen1993convergence}, we can verify that, in the Euclidean setup ($\omega(x) = \frac{1}{2}\sqnorm{x}_2$), one can set $\cS = \R^d$; in the simplex setup ($\omega(x) = \sum_{i=1}^d x^{(i)} \log x^{(i)} $), the choice $\cS = \cb{x\in \R^d \mid x^{(i)} > 0 \text{ for all } i\in[d]}$ is suitable.} For simplicity, we let $\opn{dom} r = \R^d$. The \textit{Bregman divergence} \citep{bregman1967relaxation} induced by $\omega(\cdot)$ is 
$$
D_{\omega}(x, y) := \omega(x) - \omega(y) - \langle \nabla \omega(y), x - y \rangle \quad \text{for } x, y \in \cS  . 
$$
We denote by $D_{\omega}^{\text{sym}}(x , y ) := D_{\omega}(x, y) + D_{\omega}(y, x ) $ a symmetrized Bregman divergence.

For any $\Phi : \R^d \rightarrow \R \cup \cb{+\infty}$ and a real $\rho> 0$, the Bregman Moreau envelope and the proximal operator are defined respectively by 
$$
\Phi_{1/\rho}(x) := \min_{y\in \cX} \sb{ \Phi(y) + \rho D_{\omega}(y, x) } ,
$$
$$
\operatorname{prox}_{\Phi /\rho}(x) := \argmin_{y\in \cX} \sb{ \Phi(y) + \rho D_{\omega}(y, x)  } .
$$

A point $x\in \cX \cap \cS  $ is called a first-order stationary point (FOSP) of \eqref{eq:composite_problem} if $0\in \partial (\Phi + \delta_{\cX})(x)$ for $\Phi := F + r$. 

\subsection{FOSP Measures}
We define three different measures of first-order stationarity for a candidate solution $x\in \cX \cap \cS  $. 

$(i)$ \textit{Bregman Proximal Mapping (BPM)} 
$$
 \Delta_{\rho}(x) := \rho^2 D_{\omega}^{\text{sym}}(\hat x, x)   , \quad  \hat x := \operatorname{prox}_{\Phi/\rho}(x). 
$$
$(ii)$ \textit{Bregman Gradient Mapping (BGM)} 
$$
\Delta_{\rho}^+(x) := \rho^2 D_{\omega}^{\text{sym}}(x^+, x) ,
$$
$$
x^+ := \argmin_{y \in \cX} \, \langle \nabla F(x), y \rangle + r(y) + \rho D_{\omega}(y, x) . 
$$

$(iii)$ \textit{Bregman Forward-Backward Envelope (BFBE)} 
$$
 \mathcal{D}_{\rho}(x) := - 2 \rho \min_{y\in \cX}  Q_\rho(x, y) , \\
$$
$$
Q_\rho(x, y) := \langle\nabla F(x), y - x\rangle + \rho D_{\omega}(y, x) + r(y) - r(x)  .
$$

In unconstrained Euclidean case, i.e., $\cX = \R^d$, $r(\cdot) = 0$ and $\omega(x) = \frac{1}{2}\sqnorm{x}_2$, we have $ \Delta_{\rho}^+(x) = \mathcal{D}_{\rho} (x) = \sqnorm{\nabla F(x) }_2$, which is the standard stationarity measure in non-convex optimization.\footnote{This is, however, not true for BPM, which reduces to the gradient norm of a surrogate loss, i.e., $\Delta_{\rho}(x) = \sqnorm{\nabla F_{1/\rho}(x)}_2$ for $\rho > \ell$, where $\ell$ is a smoothness constant of $F(\cdot)$.} 
Note that all three quantities presented above are measures of FOSP in the sense that if one of them $ \Delta_{\rho}( x)$ ,  $ \Delta_{\rho}^+(  x)$ or  $  \mathcal{D}_{\rho} (x) $ is zero for some $x\in \cX \cap \cS $, then $0 \in \partial (\Phi + \delta_{\cX})( x) $. However, it is more practical to understand what happens if one of them is only $\varepsilon$-close to zero. In Section~\ref{sec:connections} we establish the connections between these quantities for any $x\in \cX \cap \cS$, and find that BFBE, $\cD_\rho(x)$, is the strongest among the three. We should mention that the use of BFBE is not new for the analysis of optimization methods. In the Euclidean case, BFBE was initially proposed in \citep{patrinos2013proximal_Newton}, and its properties were later analyzed in \citep{stella2017forward,liu2017further_porp_FBE}. Later, \citet{ahookhosh2021bregman} consider BFBE in general non-Euclidean setting. 
However, to our knowledge it was not considered in the context of stochastic even in the Euclidean setup.

\section{ASSUMPTIONS}
Throughout the paper we make the following basic assumptions on $F(\cdot)$ and the stochastic gradients.
\begin{assumption}[Relative smoothness \citep{bauschke2017descent,lu2018relatively}]\label{ass:rel_smooth}
	A differentiable function $F: \cX \cap \cS\rightarrow \R$ is said to be $\ell$-\textit{relatively smooth} on $\cX \cap \cS$ with respect to (w.r.t.) $\omega(\cdot)$ if for all $x, y \in \cX \cap \cS$ 
	$$
	- \ell D_{\omega}(x,y) \leq F(x) - F(y) - \langle \nabla F(y), x - y \rangle \leq \ell D_{\omega}(x,y)  . 
	$$
 We denote such class of functions as $(\ell, \omega)$-smooth.
\end{assumption}

It is known that smoothness w.r.t.$\,\norm{\cdot}$, i.e., $\norm{\nabla F(x) - \nabla F(y)}_* \leq \ell \norm{x - y}$ for all $x, y \in \cX \cap \cS$,
implies 
Assumption~\ref{ass:rel_smooth} \citep{nesterov2018lectures}. 

\begin{assumption}\label{ass:BV}
	We have access to a stochastic oracle that outputs a random vector $\nabla f(x, \xi)$ for any given $x\in\cX$, such that $\Exp{ \nabla f (x, \xi) } = \nabla F(x), $
	\begin{equation*}
		\Exp{ \sqnorm{ \nabla f(x, \xi) - \nabla F(x) }_* } \leq \sigma^2 ,
	\end{equation*}
where the expectation is taken w.r.t. $\xi \sim P$.
\end{assumption}

\section{MAIN RESULTS}
\subsection{Connections between FOSP Measures}\label{sec:connections}
We start by establishing the connections between introduced convergence measures. It turns out that BPM and BGM are essentially equivalent, i.e., differ only by a small (absolute) multiplicative constant. 
\begin{lemma}[BPM $\approx$ BGM]\label{le:BPMandBGM}
	Let $F(\cdot)$ be $(\ell, \omega)$-smooth and $\sqrt{D_{\omega}^{\text{sym}}(x, y)}$ be a metric. Then for any $x \in \cX \cap \cS $, and $\rho, s > 0$ such that $ \rho  > \ell / s +  2 \ell $, it holds
	$$
	\frac{ \Delta_{\rho}(x)}{C(\ell, \rho, s)} \leq \Delta_{\rho}^+ (x) \leq C(\ell, \rho, s) \Delta_{\rho}(x) , $$
	where $C(\ell, \rho, s) := \frac{(1+s)(\rho - \ell) + (1 + s^{-1}) \ell }{\rho - \ell - (1+s^{-1}) \ell }$.  In particular, for $s = 1 $, $\rho = 4 \ell$, we have $C(\ell, \rho, s) = 8$, and
	$$
	\frac{1}{8} \Delta_{4\ell}(x) \leq  \Delta_{4\ell}^+(x) \leq 8 \Delta_{4\ell}(x) .
	$$
\end{lemma}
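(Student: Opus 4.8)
The plan is to exploit that both $\hat x = \operatorname{prox}_{\Phi/\rho}(x)$ and $x^+$ are exact minimizers of strongly convex subproblems whose objectives differ only by the (relatively smooth) gap between $F$ and its linearization at $x$. Write
\[
g(y) := F(y) + r(y) + \rho D_{\omega}(y,x), \qquad h(y) := \langle \nabla F(x), y\rangle + r(y) + \rho D_{\omega}(y,x),
\]
minimized by $\hat x$ and $x^+$ respectively (both lie in $\cX \cap \cS$, and $\rho > \ell$ makes $g$ effectively strongly convex), and note $g(y)-h(y) = F(y) - \langle\nabla F(x),y\rangle$. First I would record two growth estimates coming from first-order optimality, the three-point identity $D_{\omega}(z,x) = D_{\omega}(z,y) + D_{\omega}(y,x) + \langle\nabla\omega(y)-\nabla\omega(x),\, z-y\rangle$, convexity of $r$, and — for $g$ only — the \emph{lower} inequality of \cref{ass:rel_smooth}: for every $z\in\cX$,
\[
g(z) - g(\hat x) \ge (\rho-\ell)\, D_{\omega}(z,\hat x), \qquad h(z) - h(x^+) \ge \rho\, D_{\omega}(z, x^+).
\]

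Second, I would set $z=x^+$ in the first estimate and $z=\hat x$ in the second and add them. Since $g-h = F - \langle\nabla F(x),\cdot\rangle$, the left-hand side telescopes to $F(x^+) - F(\hat x) - \langle\nabla F(x), x^+-\hat x\rangle$. Splitting this as $\big[F(x^+)-F(x)-\langle\nabla F(x),x^+-x\rangle\big] - \big[F(\hat x)-F(x)-\langle\nabla F(x),\hat x-x\rangle\big]$ and applying the \emph{upper} bound of relative smoothness to the first bracket and the \emph{lower} bound to the second gives
\[
(\rho-\ell)\, D_{\omega}(x^+,\hat x) + \rho\, D_{\omega}(\hat x, x^+) \le \ell\, D_{\omega}(x^+, x) + \ell\, D_{\omega}(\hat x, x).
\]
Bounding $D_{\omega}\le D_{\omega}^{\text{sym}}$ on the right and $(\rho-\ell)D_{\omega}^{\text{sym}}(\hat x,x^+) \le (\rho-\ell)D_{\omega}(x^+,\hat x)+\rho D_{\omega}(\hat x,x^+)$ on the left yields the clean estimate $(\rho-\ell)\,M \le \ell\,(P+G)$, where $M:=D_{\omega}^{\text{sym}}(\hat x,x^+)$, $P:=D_{\omega}^{\text{sym}}(\hat x,x)$ and $G:=D_{\omega}^{\text{sym}}(x^+,x)$; thus the cross divergence $M$ is controlled by $P$ and $G$.

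Finally I would convert this into the ratio $G/P$ using that $\sqrt{D_{\omega}^{\text{sym}}}$ is a metric. The triangle inequality gives $\sqrt{G}\le\sqrt{M}+\sqrt{P}$; squaring and applying Young's inequality $2\sqrt{MP}\le s^{-1}M + sP$ gives $G \le (1+s^{-1})M + (1+s)P$. Substituting $M\le \tfrac{\ell}{\rho-\ell}(P+G)$ and solving the resulting linear inequality for $G$ produces $G\le C(\ell,\rho,s)\,P$ with exactly the stated constant, the denominator $\rho-\ell-(1+s^{-1})\ell$ being positive precisely under the hypothesis $\rho > \ell/s + 2\ell$. The reverse bound $P\le C(\ell,\rho,s)\,G$ follows by the symmetric argument using $\sqrt{P}\le\sqrt{M}+\sqrt{G}$. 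Multiplying through by $\rho^2$ gives the two-sided comparison $\tfrac{1}{C}\Delta_{\rho}(x)\le\Delta_{\rho}^+(x)\le C\,\Delta_{\rho}(x)$, and substituting $s=1,\ \rho=4\ell$ gives $C=8$.

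The main obstacle, and the only genuinely delicate step, is the cross-term bound in the second paragraph: it requires combining the two subproblems' optimality conditions in the correct order and invoking \emph{both} sides of \cref{ass:rel_smooth} simultaneously (one inequality for each bracket). Everything after that is the metric triangle inequality together with Young's-inequality bookkeeping arranged to reproduce $C(\ell,\rho,s)$.
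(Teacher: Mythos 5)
Your proof is correct and follows essentially the same route as the paper's: both arguments reduce the lemma to the cross-divergence bound $D_{\omega}^{\text{sym}}(\hat x, x^+) \leq \frac{\ell}{\rho-\ell}\bigl(D_{\omega}^{\text{sym}}(\hat x,x) + D_{\omega}^{\text{sym}}(x^+,x)\bigr)$ (the paper's Lemma~\ref{le:BPMandBGM_close}) and then finish with the identical triangle-inequality and Young's-inequality bookkeeping, yielding the same constant $C(\ell,\rho,s)$. The only variation is in how that intermediate bound is derived --- you use quadratic growth of the two subproblem objectives $g$ and $h$ around their minimizers together with both sides of relative smoothness applied to the two linearization-error brackets, whereas the paper subtracts the two first-order optimality conditions, applies the three-point identity, and invokes a three-point consequence of relative smoothness --- and both derivations are valid.
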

This result is in a similar spirit to Theorem 4.5 in \citep{drusvyatskiy2019efficiency}. However, their proof only works in the Euclidean setting and does not readily extend to other DGFs. Our proof is different and can accommodate a possibly nonsmooth DGF.\footnote{Unfortunately, it is unclear if the above result holds for arbitrary $\omega(\cdot)$ that does not induce a metric. Note that, in general, DGF might not induce a metric even for popular choices of $\omega(\cdot)$. For instance, the Shannon entropy induces $\sqrt{D_{\omega}^{\text{sym}}(x, y)}$ that does not satisfy the triangle inequality, see, e.g., Theorem 3 in  \citep{acharyya2013bregman} for details.} Next, we examine the relation between BGM and BFBE. 

\begin{lemma}[BFBE $>$ BGM]\label{le:FB_env}
	For any $x\in \cX \cap \cS$ 
	$$2 \cD_{\rho/2}(x) \geq  \Delta_{\rho}^+(x). $$ 
	There is an instance of problem \eqref{eq:composite_problem} with $\ell = 1$, $\cX = [0,1]$ and $\argmin_{y\in \cX}\Phi(x) = 0$ such that for any $\rho \in [1,2]$, $\rho_1 \geq 1$ and $x \in (0,1]$ it holds 
	$$
	\frac{\cD_{\rho}(x)}{\Delta_{\rho_1}^+(x) } \geq \frac{2}{|x|} . 
	$$
\end{lemma}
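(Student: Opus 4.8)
The plan is to exhibit a single explicit one–dimensional instance — a choice of DGF $\omega(\cdot)$, objective $F(\cdot)$ and regularizer $r(\cdot)$ on $\cX = [0,1]$ — and then to compute both $\cD_\rho(x)$ and $\Delta_{\rho_1}^+(x)$ in closed form and compare them. Since the inequality must hold for \emph{every} $\rho_1 \geq 1$, the binding quantity is $\sup_{\rho_1 \geq 1}\Delta_{\rho_1}^+(x)$, so the real target is to show $\cD_\rho(x) \geq \tfrac{2}{x}\sup_{\rho_1 \geq 1}\Delta_{\rho_1}^+(x)$. The key structural lever I would exploit is the asymmetry between the two measures: $\cD_\rho$ is built from the \emph{forward} divergence $D_\omega(y,x)$ appearing in $Q_\rho(x,y)$ and is free to place its minimizer at the global minimizer $y=0$, whereas $\Delta_{\rho_1}^+$ is built from the \emph{symmetrized} divergence $D_\omega^{\text{sym}}(x^+,x)$ evaluated at the proximal point $x^+$, which for large $\rho_1$ is forced to stay near $x$. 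I would pick a non-Euclidean $\omega(\cdot)$ (an entropy-type DGF, so that the paper's emphasis on nonsmooth DGFs is what drives the separation) together with an $F(\cdot)$ that is $(\,1,\omega)$-smooth and whose minimizer over $[0,1]$ is exactly $0$; the separation cannot arise from the Euclidean DGF with $r=0$, so the construction must genuinely use the geometry (and possibly a nonsmooth $r$ to decouple the local slope at $x$ from the global drop in $\Phi$ between $x$ and $0$).

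Concretely I would proceed in three steps. First, compute $\cD_\rho(x)$: writing $\hat y := \argmin_{y\in\cX} Q_\rho(x,y)$ and using the first-order optimality condition $\nabla F(x) + \rho(\nabla\omega(\hat y) - \nabla\omega(x)) + \partial r(\hat y) + N_{\cX}(\hat y) \ni 0$, I would argue that for the constructed instance and all $\rho \in [1,2]$ the minimizer is attained at the boundary $\hat y = 0$, so that $\cD_\rho(x) = -2\rho\, Q_\rho(x,0) = 2\rho\langle \nabla F(x), x\rangle - 2\rho^2 D_\omega(0,x) - 2\rho\bigl(r(0)-r(x)\bigr)$, giving an explicit lower bound of the correct order in $x$. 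Second, compute $\Delta_{\rho_1}^+(x)$ for a \emph{general} $\rho_1 \geq 1$: solve for $x^+$ from its optimality condition, use the identity $D_\omega^{\text{sym}}(x^+,x) = \langle \nabla\omega(x^+) - \nabla\omega(x), x^+ - x\rangle$ to get a clean expression, and then bound $\sup_{\rho_1 \geq 1}\Delta_{\rho_1}^+(x)$; here the $\rho_1 \to \infty$ regime, where $x^+ \to x$ and $\Delta_{\rho_1}^+(x) \to \operatorname{dist}(0,\nabla F(x)+\partial r(x))^2/\omega''(x)$, is the one to control. Third, divide the two estimates and verify the ratio is at least $2/x$.

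The hard part will be Step 2 — the uniform control of $\Delta_{\rho_1}^+(x)$ over \emph{all} $\rho_1 \geq 1$ simultaneously — because the denominator is largest precisely in the large-$\rho_1$ limit, which is the regime most dangerous to the claimed bound; the construction must be arranged so that this supremum is smaller than $\cD_\rho(x)$ by the full factor $x$. A secondary obstacle is verifying that the chosen $F(\cdot)$ really is $(\,1,\omega)$-smooth with $\argmin_{y\in\cX}\Phi(y)=0$ while still forcing $\hat y = 0$ for all $\rho\in[1,2]$, and that these two requirements are mutually consistent. Once the instance is fixed, the remaining computations are elementary one-dimensional optimizations and can be carried out explicitly.
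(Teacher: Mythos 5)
You only address the second claim of the lemma. The first claim --- that $2\,\cD_{\rho/2}(x)\geq \Delta_{\rho}^{+}(x)$ for \emph{every} $x\in\cX\cap\cS$ --- is never mentioned, and it is not a triviality: the paper proves it by taking $u^{+}\in\partial r(x^{+})$ from the optimality condition $0=\nabla F(x)+\rho(\nabla\omega(x^{+})-\nabla\omega(x))+u^{+}$, using convexity of $r$ to get $\langle\nabla F(x),x-x^{+}\rangle+r(x)-r(x^{+})\geq \rho\, D_{\omega}^{\text{sym}}(x^{+},x)$, and then plugging the (suboptimal) point $x^{+}$ into $Q_{\rho_1}(x,\cdot)$ to obtain $\tfrac{1}{2\rho_1}\cD_{\rho_1}(x)\geq(\rho-\rho_1)D_{\omega}^{\text{sym}}(x^{+},x)$, which with $\rho_1=\rho/2$ gives the bound. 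Omitting half the statement is a genuine gap on its own.

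For the counterexample, your proposal remains a plan: no $F$, $r$, or $\omega$ is ever written down, and the mechanism you put forward as primary --- an entropy-type DGF, on the premise that a nonsmooth non-Euclidean geometry must drive the separation --- is not what drives it. The paper's instance is entirely Euclidean: $\omega(x)=\tfrac12 x^2$, $F(x)=\tfrac12 x^2$, $r(x)=|x|$ on $\cX=[0,1]$. Soft-thresholding gives $x^{+}=0$, so $\Delta_{\rho_1}^{+}(x)$ is quadratic in $x$, whereas $\cD_{\rho}(x)=2\rho|x|+2\rho(1-\rho/2)x^{2}\geq 2\rho|x|$ picks up a term \emph{linear} in $x$ coming from the drop $r(x)-r(0)=|x|$; the ratio therefore scales like $1/|x|$. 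The lever is the first-order regularizer entering $\cD_{\rho}$ through function values versus the squared distance entering $\Delta_{\rho_1}^{+}$ --- precisely the possibility you relegate to a parenthetical, while your main story (the minimizer of $Q_{\rho}$ escaping to $y=0$ while $x^{+}$ stays near $x$) is not what happens here, since both minimizers coincide at $0$. Your concern about uniformity over $\rho_1\geq 1$ is legitimate (the $\rho_1^{2}$ prefactor in $\Delta_{\rho_1}^{+}$ makes the supremum over $\rho_1$ delicate, a point the paper's own computation glosses over), but identifying the difficulty is not resolving it: without a concrete instance and the accompanying closed-form evaluation of both quantities, no inequality has been established.
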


The above lemma implies that BFBE is a strictly stronger convergence measure than previously considered BGM and BPM. Moreover, the difference between BFBE and BGM can be arbitrarily large even when $x$ is close to the optimum! This effect is actually very common and happens already in the Euclidean case with classical regularizer $r(x) = \norm{x}_1$. The explanation for this phenomenon is simple. Notice that BGM is defined in the primal terms, i.e., the squared distance between $x$ and $x^+$, while BFBE is defined in the functional terms (the minimum value of $Q_{\rho}(x,y)$ over $y$). Therefore, BFBE unlike BGM scales with the value of $r(x) = |x|$ rather than $x^2 $.  

We conclude from Lemma~\ref{le:BPMandBGM} and \ref{le:FB_env} that $\cD_{\rho}(x)$ is the strongest convergence measure among the three. In the subsequent sections we aim to establish convergence of \algname{SMD} directly w.r.t.\,BFBE instead of using BGM or BPM.

\subsection{Convergence to FOSP in Expectation}\label{sec:conv_FOSP_exp}
We start with our key result, which establishes convergence of \algname{SMD} in expectation. 
\begin{tcolorbox}[colback=gray!5!white,colframe=gray!75!black]
	\begin{theorem}\label{thm:SMD}
	Let Assumptions~\ref{ass:rel_smooth} and \ref{ass:BV} hold. Let the sequence $\cb{\eta_t}_{t\geq0}$ be non-increasing with $\eta_0 \leq 1/(2\ell)$, and $\bar{x}_T $ be randomly chosen from the iterates $x_0, \ldots, x_{T-1}$ with probabilities $p_t = \stepsize_t / \sum_{t=0}^{T-1}\stepsize_t$. Then 
	\begin{equation}\label{eq:main_diminishing_sz}
		\Exp{ \cD_{3\ell}(\bar x_T) } \leq \frac{3 \lambda_0 + 6 \ell \sigma^2  \sum_{t=0}^{T-1} \eta_t^2 }{ \sum_{t=0}^{T-1} \eta_t } , 
	\end{equation}
	where $\lambda_0 := \Phi_{1/\rho}(x_0) - \Phi^* + \Phi(x_0) - \Phi^*$. If we set constant step-size $\stepsize_t =\min\cb{\frac{1}{2\ell}, \sqrt{\frac{\lambda_0}{\sigma^2 \ell T}}}$, then 
	$$
	\Exp{ \cD_{3\ell}(\bar x_T) } = \cO\rb{\frac{\ell \lambda_0}{T} + \sqrt{\frac{\sigma^2 \ell \lambda_0}{T}} }.
	$$
\end{theorem}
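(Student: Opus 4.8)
The plan is to prove a one-step Lyapunov inequality of the form
\[
\mathbb{E}_t[\lambda_{t+1}] \leq \lambda_t - \tfrac{1}{3}\eta_t\, \cD_{3\ell}(x_t) + 2\ell\sigma^2\eta_t^2 ,
\]
where $\mathbb{E}_t$ is the conditional expectation given $x_t$, and then telescope. Summing over $t=0,\dots,T-1$, taking total expectation, and using $\lambda_T\geq 0$ together with $\lambda_0\leq \Phi_{1/\rho}(x_0)-\Phi^*+\Phi(x_0)-\Phi^*$ (which follows from $\eta_{-1}=\eta_0$ and $\eta_0\rho\leq 1$) gives $\tfrac13\sum_t\eta_t\,\Exp{\cD_{3\ell}(x_t)}\leq \lambda_0 + 2\ell\sigma^2\sum_t\eta_t^2$. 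Since $\bar x_T$ is drawn with probabilities $p_t=\eta_t/\sum_s\eta_s$, the left-hand side equals $\tfrac13(\sum_t\eta_t)\,\Exp{\cD_{3\ell}(\bar x_T)}$, and dividing by $\tfrac13\sum_t\eta_t$ yields \eqref{eq:main_diminishing_sz}. The constant step-size bound then follows by substituting $\sum_t\eta_t=T\eta$, $\sum_t\eta_t^2=T\eta^2$ and simplifying the two regimes of $\eta=\min\{1/(2\ell),\sqrt{\lambda_0/(\sigma^2\ell T)}\}$.

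The one-step inequality is assembled from a descent estimate for each component of $\lambda_t$. The common tool is the three-point (prox) inequality obtained from the optimality condition of \eqref{eq:SMD} and convexity of $r$: for every $y\in\cX$,
\[
\eta_t\sb{\langle \nabla f(x_t,\xi_t), x_{t+1}-y\rangle + r(x_{t+1})-r(y)} \leq D_\omega(y,x_t) - D_\omega(y,x_{t+1}) - D_\omega(x_{t+1},x_t) .
\]
For the function-value part $\lambda_{t,1}=\Phi(x_t)-\Phi^*$, I would combine the \emph{upper} bound of Assumption~\ref{ass:rel_smooth} with $r(x_{t+1})-r(x_t)$ to get $\Phi(x_{t+1})-\Phi(x_t)\leq Q_\rho(x_t,x_{t+1}) - (\rho-\ell)D_\omega(x_{t+1},x_t)$. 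The key point is that $x_{t+1}$ is itself the minimizer of a $Q$-type objective built from $\nabla f(x_t,\xi_t)$, so its optimality permits comparison with any competitor; testing the three-point inequality against the competitor $x^+$ (the point defining $\cD_\rho$) surfaces $\min_y Q_\rho(x_t,y)=-\tfrac1{2\rho}\cD_\rho(x_t)$ up to a noise remainder, which is precisely how BFBE enters the descent rather than the weaker BGM. For the envelope part $\lambda_{t,2}=\Phi_{1/\rho}(x_t)-\Phi^*$, I would use $\Phi_{1/\rho}(x_{t+1})\leq\Phi(\hat x_t)+\rho D_\omega(\hat x_t,x_{t+1})$ with $\hat x_t=\operatorname{prox}_{\Phi/\rho}(x_t)$ and the three-point inequality at $y=\hat x_t$; here the \emph{lower} bound of Assumption~\ref{ass:rel_smooth} guarantees that the envelope is well defined and that the resulting progress is genuinely negative.

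Forming $\lambda_{t+1}-\lambda_t$, I would first discard $(\eta_t-\eta_{t-1})\rho(\Phi(x_t)-\Phi^*)\leq 0$ using monotonicity of $\cb{\eta_t}$ and $\Phi(x_t)\geq\Phi^*$. The weighting $\eta_{t-1}\rho$ is chosen so that the positive $\rho D_\omega(x_{t+1},x_t)$ terms from the two estimates cancel against the negative Bregman terms $-(\rho-\ell)D_\omega(x_{t+1},x_t)$ and $-\tfrac1{\eta_t}D_\omega^{\text{sym}}(x_t,x_{t+1})$; the cap $\eta_0\leq 1/(2\ell)$ (so that $\eta_t\ell\leq 1/2$) gives these residual terms a definite sign and is what converts the natural parameter $\ell+1/\eta_t$ into the stated measure parameter $3\ell$. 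I expect the main obstacle to be the noise term $\eta_t\rho\langle \nabla F(x_t)-\nabla f(x_t,\xi_t), x_{t+1}-x_t\rangle$: since $x_{t+1}$ depends on $\xi_t$, its conditional mean does not vanish, which is exactly the difficulty that forces large batches in \citep{ghadimi2016mini_batch}.

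I would resolve the noise term by introducing the ghost iterate $\tilde x_{t+1}$ defined by the same update with the exact gradient $\nabla F(x_t)$. The cross term then splits as $\langle \nabla F(x_t)-\nabla f(x_t,\xi_t), \tilde x_{t+1}-x_t\rangle$, whose conditional mean is zero because $\tilde x_{t+1}$ is measurable given $x_t$, plus $\langle \nabla F(x_t)-\nabla f(x_t,\xi_t), x_{t+1}-\tilde x_{t+1}\rangle$. The $1$-strong convexity of $\omega$ makes the Bregman prox map stable under linear perturbations, giving $\norm{x_{t+1}-\tilde x_{t+1}}\leq \eta_t\norm{\nabla f(x_t,\xi_t)-\nabla F(x_t)}_*$; combined with Cauchy--Schwarz and Assumption~\ref{ass:BV}, the conditional expectation of the remaining cross term is at most $\eta_t\sigma^2$, so after multiplying by the $\cO(\eta_t\ell)$ prefactor and absorbing the leftover positive square into the negative Bregman terms, the noise contributes only $\cO(\ell\sigma^2\eta_t^2)$. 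This is the step I expect to require the most care, since the perturbation and square-completion must be carried out intrinsically in the (possibly nonsmooth) Bregman geometry rather than through a Euclidean expansion.
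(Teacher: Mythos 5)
Your proposal is correct and follows essentially the same route as the paper: the same Lyapunov function $\lambda_t=\stepsize_{t-1}\rho(\Phi(x_t)-\Phi^*)+\Phi_{1/\rho}(x_t)-\Phi^*$, the same use of both the upper and lower relative-smoothness inequalities, the same three-point/prox inequality tested at $\hat x_t=\operatorname{prox}_{\Phi/\rho}(x_t)$, and the same telescoping with $\rho=2\ell$ so that the measure parameter becomes $\rho+\ell=3\ell$. The one place you genuinely deviate is the biased cross term $\rho\stepsize_t\langle\psi_t,x_t-x_{t+1}\rangle$: you propose a ghost iterate $\tilde x_{t+1}$ (computed with $\nabla F(x_t)$) together with the $1$-strong-convexity stability bound $\|x_{t+1}-\tilde x_{t+1}\|\le\stepsize_t\|\psi_t\|_*$, whereas the paper simply applies Young's inequality and absorbs the resulting $\tfrac{1-\stepsize_t\ell}{2}\sqnorm{x_{t+1}-x_t}$ into the available $-(1-\stepsize_t\ell)D_{\omega}(x_{t+1},x_t)\le-\tfrac{1-\stepsize_t\ell}{2}\sqnorm{x_{t+1}-x_t}$. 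Both yield the same $\mathcal{O}(\ell\sigma^2\stepsize_t^2)$ contribution; the paper's absorption is shorter and avoids an extra iterate, while your ghost-iterate argument is slightly more modular (the martingale part vanishes exactly and the negative Bregman term can simply be discarded) and is the kind of decomposition that generalizes more readily when one cannot afford to spend the whole negative quadratic on the noise. Two small imprecisions in your sketch, neither fatal: the prox inequality produces $-D_{\omega}(x_{t+1},x_t)$ (and a discarded $-D_{\omega}(\hat x_t,x_{t+1})$), not a symmetrized divergence with a $1/\stepsize_t$ factor; and in the paper the BFBE is surfaced not at the update's own optimality condition but inside the deterministic inequality $\Phi_{1/\rho}(x)\le\Phi(x)-\tfrac{1}{2\rho_1}\cD_{\rho_1}(x)$, where $x^+$ serves as the competitor for the prox defining $\Phi_{1/\rho}$.
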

\end{tcolorbox}
\begin{proof}[Proof sketch:] We start with 

\textbf{Step I. Deterministic descent w.r.t.\,BFBE.} We show that for any $\rho_1 \geq \rho + \ell$ 
$$
  \Phi_{1/\rho}( x) \leq  \Phi\left(x\right) - \fr{1}{2 \rho_1} \cD_{\rho_1}( x  )  .
$$
This inequality corresponds to deterministic descent on $\Phi(\cdot)$ of the Bregman Proximal Point Method. It will be useful in the next step to derive a recursion on $\cD_{\rho_1}( x  )$.

\textbf{Step II. One step progress on the Lyapunov function.} This step is the most technical one and consists of showing a progress on a carefully chosen Lyapunov function $ \lambda_t := \Phi_{1/\rho}(x_t) - \Phi^* + \stepsize_{t-1} \rho (\Phi(x_t) - \Phi^* ) $, where $\stepsize_{-1} = \stepsize_0$, $\rho > 0$: 
\begin{align*}
\lambda_{t+1} &\leq \lambda_t - \fr{\stepsize_t \rho }{2 (\rho + \ell) } \cD_{\rho + \ell}( x_t  )  +  \rho   \stepsize_t  \langle \psi_t , \hat x_t - x_{t} \rangle  \\
&  +  \rho (  \stepsize_t  \langle \psi_t , x_t - x_{t+1}\rangle   - (1-\stepsize_t \ell) D_{\omega}(x_{t+1}, x_t) ) ,
\end{align*}
where $\psi_t := \nabla f(x_t, \xi_t) - \nabla F(x_t)$.  

  \textbf{Step III. Dealing with stochastic terms.} The goal of this step is to control the stochastic terms in the above inequality using Assumption~\ref{ass:BV}. 
  
  It remains to telescope and set the step-sizes to derive the final result. 
  \end{proof}
  When specialized to the unconstrained Euclidean setting, the result of Theorem~\ref{thm:SMD} recovers (up to a small absolute constant) previously established convergence bounds for \algname{SGD} \citep{ghadimi2013stochastic} (since in this case we have $\cD_{3\ell}(\bar x_T) = \sqnorm{\nabla F(\bar x_T)}_2$), which is known to be optimal \citep{arjevani2023lower,drori2020complexity,yang2023two_sides}. However, already in the composite setting (when $r(\cdot) \neq 0$), our result is stronger than previously derived bounds for \algname{Prox-SGD} \citep{davis2018stochastic_HO_Gr} because BFBE can be much larger than BPM/BGM even in the Euclidean case as we have seen in Lemma~\ref{le:FB_env}. 
  
  In the more general non-Euclidean case, compared to Theorem 2 in \citep{ghadimi2016mini_batch}, our method does not require using large batches, and our proof works for any batch size. Moreover, \citep{ghadimi2016mini_batch} relies on the stronger assumptions: smoothness and bounded variance in the primal norm. Furthermore, a much weaker convergence measure is used in \citep{ghadimi2016mini_batch}: the squared norm of the difference between $x_t$ and $x_t^+$.\footnote{Notice that $\rho^2 \sqnorm{x - x^+} \leq \Delta_{\rho}^+(x)  \leq 2 \cD_{\rho/2}(x)$, where the first inequality holds by strong convexity of $\omega(\cdot)$, and the second is due to Lemma~\ref{le:FB_env}.} 
  
  	\citet{davis2018stochastic_HO_Gr} derive convergence of \algname{SMD} w.r.t.\,the Bregman divergence between $\hat x_t$ and $x_t$, i.e., $D_{\omega}(\hat x_t, x_t)$. Such convergence measure is not satisfactory for two reasons. First, for a general DGF of interest, the Bregman divergence is not symmetric, and it can happen that $D_{\omega}(\hat x_t, x_t)$ vanishes, while $D_{\omega}( x_t, \hat x_t)$ does not (see, e.g.,  Proposition 2 in \citep{bauschke2017descent}). Second, to justify this measure the authors in \citep{davis2018stochastic_HO_Gr} assume $\omega(\cdot)$ to be twice differentiable and notice that $2 \rho^2 D_{\omega}(\hat x_t, x_t) \geq \sqnorm{ ( \nabla^2 \omega(x_t) )^{-1} \nabla \Phi_{1/\rho}(x_t) }_* $, where $ \nabla \Phi_{1/\rho} (\cdot)$ is the gradient of the Moreau envelope of $\Phi(\cdot)$. However, the latter measure also does not seem to be sufficient either: even if we additionally assume the uniform smoothness of $\omega(\cdot)$, it is unclear how $ \nabla \Phi_{1/\rho} (\cdot)$ is connected to the standard convergence measures such as the gradient mapping in non-Euclidean setting. In the concurrent work to \citep{davis2018stochastic_HO_Gr}, \citet{zhang2018convergence} derive convergence of \algname{SMD} on the BPM. They also notice that if $\omega(\cdot)$ is differentiable and smooth (i.e.,\,$\nabla \omega(\cdot)$ is $M$-Lipschitz continuous) on $\cX$, then $\text{dist}^2(0, \partial (\Phi + \delta_{\cX}) (\hat x_t )) \leq M \Delta_{\rho}(x_t) $. However, we argue that such assumption is very strong since commonly used DGFs such as Shannon entropy are not smooth. Moreover, the analysis in \citep{zhang2018convergence} uses bounded gradients (BG) assumption, which fails to hold even for a quadratic function if $\cX$ is unbounded.\footnote{Not saying about the general relatively smooth functions, for which BG can fail even on a compact domain.}

  \subsection{High Probability Convergence to FOSP under Sub-Gaussian Noise}
  While convergence in expectation for a randomly selected point $\bar x_T$ is classical and widely accepted in stochastic optimization, it does not necessarily guarantee convergence for a single run of the method. In this section, we extend our  Theorem~\ref{thm:SMD} to guarantee convergence for a single run of \algname{SMD} with high probability. To obtain high probability bounds, we replace our Assumption~\ref{ass:BV} with the following commonly used \q{light tail} assumption on the stochastic noise distribution. 
  \begin{assumption}\label{ass:subgauss}
  	We have access to a stochastic oracle that outputs a random vector $\nabla f(x, \xi)$ for any given $x\in \cX$, such that $\Exp{\nabla f(x, \xi) } = \nabla F(x)$, and 
  	$$
  	\norm{\nabla f(x, \xi) - \nabla F(x) }_* \quad \text{is $\sigma$-sub-Gaussian r.v. } \footnote{A random variable $X$ is called $\sigma$-sub-Gaussian if $\Exp{ \exp(\lambda^2 X^2 )} \leq \exp(\lambda^2 \sigma^2 )$ for all $\lambda \in \R$ with $|\lambda| \leq 1/\sigma$.} 
  	$$
  \end{assumption}
  
  \begin{tcolorbox}[colback=gray!5!white,colframe=gray!75!black]
  \begin{theorem}\label{thm:high_prob_subgauss}
  	Let Assumptions~\ref{ass:rel_smooth} and \ref{ass:subgauss} hold. Let the sequence $\cb{\eta_t}_{t\geq0}$ be non-increasing with $\eta_0 \leq 1/(2\ell)$. Then with probability at least $1 - \beta$
  	\begin{eqnarray}
  		\frac{1}{ \sum_{t=0}^{T-1} \stepsize_t } \sum_{t=0}^{T-1} \stepsize_t  \, \cD_{5\ell}(x_t) \leq 
  		\fr{ 5 \wt \lambda_0 + 60 \sigma^2 \ell  \sum_{t=0}^{T-1} \stepsize_t^2  }{2 \sum_{t=0}^{T-1} \stepsize_t } , \notag 
  	\end{eqnarray}
  	where $\wt \lambda_0 :=  3 \, ( \Phi(x_0) - \Phi^*) + 8\,  \stepsize_0 \sigma^2 \log \rb{ \nfr 1 \beta } $ .
  \end{theorem}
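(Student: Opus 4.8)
The plan is to recycle the deterministic one-step inequality from Steps~I--II of Theorem~\ref{thm:SMD}, now specialized to $\rho=4\ell$ so that $\rho+\ell=5\ell$ and the descent term is exactly $-\frac{\eta_t\rho}{2(\rho+\ell)}\cD_{5\ell}(x_t)=-\frac{2}{5}\eta_t\cD_{5\ell}(x_t)$, matching the measure in the statement. The only stochastic quantities in that inequality are the martingale-like term $\rho\eta_t\langle\psi_t,\hat x_t-x_t\rangle$ and the $x_{t+1}$-dependent term $\rho(\eta_t\langle\psi_t,x_t-x_{t+1}\rangle-(1-\eta_t\ell)D_\omega(x_{t+1},x_t))$. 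The second is handled deterministically: by $1$-strong convexity of $\omega$ we have $D_\omega(x_{t+1},x_t)\ge\frac12\sqnorm{x_{t+1}-x_t}$, and Young's inequality with $\eta_t\ell\le\frac12$ bounds it by $\rho\eta_t^2\sqnorm{\psi_t}_*$. Telescoping, using $\lambda_T\ge0$ and the elementary bound $\lambda_0\le3(\Phi(x_0)-\Phi^*)$ (from $\Phi_{1/\rho}(x_0)\le\Phi(x_0)$ and $\eta_0\rho=4\ell\eta_0\le2$), I obtain the master inequality
\begin{equation*}
\frac{2}{5}\sum_{t=0}^{T-1}\eta_t\,\cD_{5\ell}(x_t)\;\le\;\lambda_0\;+\;\rho\sum_{t=0}^{T-1}\eta_t\langle\psi_t,\hat x_t-x_t\rangle\;+\;\rho\sum_{t=0}^{T-1}\eta_t^2\sqnorm{\psi_t}_* .
\end{equation*}
All that remains is to turn the two noise sums into a high-probability statement.

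I would concentrate both sums simultaneously through a \emph{single} exponential supermartingale rather than treating them separately. Let $\mathcal{F}_t:=\sigma(\xi_0,\dots,\xi_{t-1})$, so $x_t,\hat x_t$ are $\mathcal{F}_t$-measurable and $\mathbb{E}[\psi_t\mid\mathcal{F}_t]=0$. Since $\norm{\psi_t}_*$ is $\sigma$-sub-Gaussian and $\hat x_t-x_t$ is $\mathcal{F}_t$-measurable, the increment $\langle\psi_t,\hat x_t-x_t\rangle$ is conditionally mean-zero and sub-Gaussian with variance proxy of order $\sqnorm{\hat x_t-x_t}\sigma^2$; the term $\sqnorm{\psi_t}_*$ is a squared sub-Gaussian with finite conditional MGF $\mathbb{E}[\exp(\mu\sqnorm{\psi_t}_*)\mid\mathcal{F}_t]\le\exp(c\mu\sigma^2)$ whenever $\mu\lesssim\sigma^{-2}$. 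Multiplying the recursion by $\theta>0$, exponentiating, and splitting the joint conditional MGF of the two terms by Cauchy--Schwarz, I will show that
\begin{equation*}
W_t:=\exp\!\Big(\theta\lambda_t+\tfrac{2}{5}\theta\textstyle\sum_{s<t}\eta_s\cD_{5\ell}(x_s)-\theta B_t\Big)
\end{equation*}
is a supermartingale for a suitable predictable compensator $B_t$, provided $\theta\rho\eta_t^2\lesssim\sigma^{-2}$, which holds for $\theta\asymp(\eta_0\sigma^2)^{-1}$ because $\eta_t\le\eta_0$. Applying Ville's (or Markov's) inequality to $W_T$ and using $\lambda_T\ge0$ then converts the master inequality into the claimed bound, with the single logarithmic term $\frac1\theta\log\frac1\beta\asymp\eta_0\sigma^2\log\frac1\beta$ feeding into $\widetilde\lambda_0$.

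The delicate point is that the martingale variance proxy is \emph{self-bounding}: it scales with $\sqnorm{\hat x_t-x_t}$, a quantity tied to the progress itself. Using $1$-strong convexity, $\sqnorm{\hat x_t-x_t}\le2D_\omega(x_t,\hat x_t)$, and the optimality conditions of $\hat x_t=\operatorname{prox}_{\Phi/\rho}(x_t)$ together with \emph{both} inequalities of Assumption~\ref{ass:rel_smooth} (this is where the lower relative-smoothness bound enters), one can bound the prox displacement by the stationarity measure up to the reverse divergence, namely $30\ell^2 D_\omega(x_t,\hat x_t)\le\cD_{5\ell}(x_t)+20\ell^2 D_\omega(\hat x_t,x_t)$, while $D_\omega(\hat x_t,x_t)\le\frac1\rho(\Phi_{1/\rho}(x_t)-\Phi^*)\le\frac1\rho\lambda_t$. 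The $\cD_{5\ell}$ piece is absorbed into the telescoped descent carried inside $W_t$ (using $\eta_t^2\le\eta_0\eta_t$ and $\theta\asymp(\eta_0\sigma^2)^{-1}$, which keeps its coefficient a small fraction of $\frac25$), and the quadratic-noise mean accounts for the $60\sigma^2\ell\sum_t\eta_t^2$ term (up to the absolute constant); the residual $\frac1\rho\lambda_t$ contribution must be folded back against the $\theta\lambda_t$ already present in $W_t$. All of these calibrations use precisely the step-size hypotheses $\eta_0\le1/(2\ell)$ and monotonicity of $\{\eta_t\}$.

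The main obstacle is exactly this coupling. In a general non-Euclidean geometry the BPM displacement $\sqnorm{\hat x_t-x_t}$ \emph{cannot} be controlled by the BFBE $\cD_{5\ell}(x_t)$ alone: the reverse term $D_\omega(\hat x_t,x_t)$ enters with the wrong sign and, since $\sqrt{D_\omega^{\text{sym}}}$ need not be a metric (cf.\ the footnote to Lemma~\ref{le:BPMandBGM}), it cannot be traded for the forward one. Hence a genuine fraction of the variance proxy remains tied to the Lyapunov value $\lambda_t$, and a naive Azuma-type bound with a deterministic variance proxy is unavailable. The whole purpose of placing the compensator $B_t$ inside the \emph{same} exponential process that already carries $\theta\lambda_t$ and the accumulated descent is to neutralize this residual coupling. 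Verifying that $W_t$ stays a supermartingale under $\theta\asymp(\eta_0\sigma^2)^{-1}$ --- and, crucially, that the probabilistic price collapses to a single $\eta_0\sigma^2\log(1/\beta)$ term rather than a coarser $\sum_t\eta_t^2$-weighted deviation --- is the technical heart of the argument.
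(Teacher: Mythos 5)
Your overall architecture is the paper's: the same Lyapunov sequence $\lambda_t$, the same one-step inequality from Step~II of Theorem~\ref{thm:SMD}, an exponential (super)martingale / conditional-MGF recursion closed by Markov's inequality, and the MGF bound of Lemma~\ref{le:lemma22_liu} with the calibration $w \asymp (\eta_0\sigma^2)^{-1}$ producing the single $\eta_0\sigma^2\log(1/\beta)$ term. But you discard the one ingredient that makes the argument close, and the workaround you sketch in its place is not resolved. Specifically, your ``master inequality'' drops the negative term $-\eta_t\rho(\rho-\ell)D_{\omega}(\hat x_t,x_t)$ that inequality \eqref{eq:one_step_progress_Lyap} retains. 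By $1$-strong convexity of $\omega$ one has $\sqnorm{\hat x_t-x_t}\le 2D_{\omega}(\hat x_t,x_t)$ \emph{in the same orientation} as that retained term, so after applying Lemma~\ref{le:lemma22_liu} the state-dependent part of the variance proxy, $3\sigma^2w^2\rho^2\eta_t^2\sqnorm{\hat x_t-x_t}$, is cancelled \emph{pointwise} inside the conditional MGF by choosing $w=\frac{\rho-\ell}{6\sigma^2\rho\eta_0}$ (so that $6\sigma^2w^2\rho^2\eta_t^2\le w\eta_t\rho(\rho-\ell)$). There is no coupling with $\cD_{5\ell}(x_t)$ or with $\lambda_t$ to manage at all; what you identify as ``the technical heart'' simply does not arise in the paper's proof.

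Because you threw that term away, you are forced to control $\sqnorm{\hat x_t-x_t}$ via $D_{\omega}(x_t,\hat x_t)$ (the reverse orientation), then via $\cD_{5\ell}(x_t)+20\ell^2 D_{\omega}(\hat x_t,x_t)$ and $D_{\omega}(\hat x_t,x_t)\le\rho^{-1}\lambda_t$, and finally to ``fold the residual $\rho^{-1}\lambda_t$ back against the $\theta\lambda_t$ already present in $W_t$.'' That last step is the genuine gap: there is no negative drift in $\lambda_t$ in the one-step inequality, so absorbing a conditional-variance contribution of order $\theta^2\rho\eta_t^2\sigma^2\lambda_t$ into the exponent forces a multiplicative inflation of the form $\prod_t(1+a_t)$ with $a_t\asymp\theta\rho\eta_t^2\sigma^2$; with $\theta\asymp(\eta_0\sigma^2)^{-1}$ this gives $a_t\asymp\ell\eta_t$ and hence a factor $\exp\rb{c\,\ell\sum_{t<T}\eta_t}$, which diverges precisely in the regime where the bound is supposed to be meaningful. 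So as written the proposal does not yield the stated inequality. The fix is not more supermartingale machinery but simply keeping $-\eta_t\rho(\rho-\ell)D_{\omega}(\hat x_t,x_t)$ inside $Z_t$; with that, your construction reduces to the paper's proof essentially verbatim.
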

  \end{tcolorbox}
	To our knowledge, the above theorem is the first high probability bound for nonconvex \algname{SMD} without use of large batches. If we use large mini-batch,\footnote{Which reduces $\sigma^2$ to $\sigma^2/B$ for mini-batch of size $B$.} then the above theorem implies $\cO\rb{  \frac{ 1 }{\varepsilon^2} + \frac{\sigma^2 }{\varepsilon^2} \log \rb{\nfr 1 \beta} + \frac{\sigma^2 }{\varepsilon^4}}$ sample complexity to ensure $\min_{0\leq t\leq T-1} \cD_{5\ell}(x_t) \leq \varepsilon^2$. Compared to the bound derived in \citep{ghadimi2016mini_batch}, which is $\cO\rb{  \frac{ 1 }{\varepsilon^2} \log \rb{\nfr 1 \beta} + \frac{\sigma^2 }{\varepsilon^4} \log \rb{\nfr 1 \beta} } $\footnote{Discarding the samples for post-proccesing step in equation (71) therin.}, our sample complexity is better by a factor of $\log \rb{\nfr 1 \beta}$. Moreover, our Assumptions~\ref{ass:rel_smooth} and \ref{ass:subgauss} are weaker than in \citep{ghadimi2016mini_batch}. When specialized to the Euclidean setup and setting the specific step-size sequences, our Theorem~\ref{thm:high_prob_subgauss} can recover (up to an absolute constant) recently derived high probability bounds for nonconvex \algname{SGD} \citep{liu2023high}. However, unlike \citep{liu2023high}, our theorem holds for any square summable step-sizes and accommodates more general (non-Euclidean) norm in Assumption~\ref{ass:subgauss}. We will demonstrate the crucial benefit of using non-Euclidean setup later in Section~\ref{sec:DP_learning}.

\subsection{Global Convergence under Generalized Proximal P{\L} condition}
	In this subsection, we are interested in global convergence of \algname{SMD} for structured nonconvex problems. We first introduce the following generalization of Proximal Polyak-Lojasiewicz (Prox-P{\L}) condition \citep{polyak1963gradient,lojasiewicz63,Lezanski63}. 
  \begin{assumption}[$\alpha$-Bregman Prox-P{\L}]\label{ass:KL}
  	There exists $\alpha \in [1,2]$ and $\mu > 0$ such that for some $\rho \geq 3 \ell$ and all $x \in \cX \cap \cS $ 
  	\begin{eqnarray}\label{eq:KL}
  		\cD_{\rho}( x  ) \geq 2 \mu (\Phi(x) - \Phi^*)^{\nfr{2}{\alpha}} .
  	\end{eqnarray}
  \end{assumption}
  The above assumption generalizes Prox-P{\L} condition studied in \citep{Karimi_PL,reddi2016proximal,li2018simple} in two ways. First, we have $\cD_{\rho}( x  )$ defined w.r.t.\,an arbitrary non-Euclidean DGF. Second, we consider $\alpha \in [1, 2]$ instead of fixing $\alpha = 2$. We will demonstrate later in Section~\ref{sec:SPG_in_RL} that both of these generalizations are important in some nonconvex problems and the flexibility of choosing $\omega(\cdot)$ can reduce the total sample complexity. We now state the global convergence of \algname{SMD}. 
  
   \begin{tcolorbox}[colback=gray!5!white,colframe=gray!75!black]
  \begin{theorem}\label{thm:SGD_KL}
  	Let Assumptions~\ref{ass:rel_smooth}, \ref{ass:BV} and \ref{ass:KL} hold. For any $\varepsilon > 0$, there exists a choice of step-sizes $\cb{\stepsize_{t}}_{t\geq 0}$ for method \eqref{eq:SMD} such that $ \min_{t\leq T}\Exp{\Phi(x_{t}^+) - \Phi^*} \leq \varepsilon$ after
  	$$
  	T = \cO\rb{ \fr{\ell \Lambda_0}{ \mu} \frac{1}{ \varepsilon^{\fr{2-\al}{\alpha}} }\log\rb{ \fr{\ell \Lambda_0  }{ \mu \varepsilon } } +  \fr{ \ell \Lambda_0 \sigma^2}{\mu^2} \frac{1}{\varepsilon^{\fr{4-\alpha}{\alpha}}}  }  . 
  	$$
  \end{theorem}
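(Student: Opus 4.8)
The plan is to combine the one-step Lyapunov descent already established for Theorem~\ref{thm:SMD} with the Prox-P{\L} inequality to obtain a \emph{scalar} recursion on $A_t := \Exp{\lambda_t}$, solve that recursion with a tuned step-size schedule, and finally translate the resulting Lyapunov bound into a guarantee on $\Phi(x_t^+)-\Phi^*$.

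\textbf{Step 1 (scalar recursion).} First I would reuse Steps~II--III of the proof of Theorem~\ref{thm:SMD} with $\rho=2\ell$ (so that the descent term is $\cD_{3\ell}$); after taking expectations and applying Assumption~\ref{ass:BV} this gives
\[
\Exp{\lambda_{t+1}} \leq \Exp{\lambda_t} - \tfrac{\stepsize_t}{3}\Exp{\cD_{3\ell}(x_t)} + 2\ell\sigma^2\stepsize_t^2,
\]
with $\lambda_t = (\Phi_{1/2\ell}(x_t)-\Phi^*) + 2\ell\stepsize_{t-1}(\Phi(x_t)-\Phi^*)$. Because $\Phi_{1/2\ell}(x_t)-\Phi^*\leq\Phi(x_t)-\Phi^*$ and $2\ell\stepsize_{t-1}\leq1$, we have $\lambda_t\leq 2(\Phi(x_t)-\Phi^*)$; hence Assumption~\ref{ass:KL} (taking $\rho=3\ell$) yields the pointwise bound $\cD_{3\ell}(x_t)\geq 2\mu(\Phi(x_t)-\Phi^*)^{2/\alpha}\geq 2\mu(\lambda_t/2)^{2/\alpha}$. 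Taking expectations and using Jensen's inequality (the map $z\mapsto z^{2/\alpha}$ is convex since $\alpha\leq2$) closes the recursion:
\[
A_{t+1}\leq A_t - c\,\mu\,\stepsize_t\,A_t^{2/\alpha} + 2\ell\sigma^2\stepsize_t^2, \qquad c=\tfrac{2}{3}2^{-2/\alpha}\in[\tfrac16,\tfrac13].
\]

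\textbf{Step 2 (from $\lambda_t$ to $\Phi(x_t^+)$).} Next I would prove the pointwise inequality $\Phi(x_t^+)-\Phi^*\leq 2\lambda_t$, which is what makes the gradient-mapping point (the implementable output, since $x_t^+$ only linearizes $F$) the natural convergence criterion. Comparing $x_t^+$ with the exact proximal point $\hat x_t:=\operatorname{prox}_{\Phi/2\ell}(x_t)$: the upper relative-smoothness bound gives $\Phi(x_t^+)\leq\Phi(x_t)+Q_{3\ell}(x_t,x_t^+)-2\ell D_{\omega}(x_t^+,x_t)$, optimality of $x_t^+$ as the minimizer of $Q_{3\ell}(x_t,\cdot)$ gives $Q_{3\ell}(x_t,x_t^+)\leq Q_{3\ell}(x_t,\hat x_t)$, and the lower relative-smoothness bound gives $Q_{3\ell}(x_t,\hat x_t)\leq\Phi(\hat x_t)-\Phi(x_t)+4\ell D_{\omega}(\hat x_t,x_t)$. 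Chaining these and using $\Phi_{1/2\ell}(x_t)-\Phi^*=(\Phi(\hat x_t)-\Phi^*)+2\ell D_{\omega}(\hat x_t,x_t)$ together with $\Phi(\hat x_t)\geq\Phi^*$ yields $\Phi(x_t^+)-\Phi^*\leq 2(\Phi_{1/2\ell}(x_t)-\Phi^*)\leq 2\lambda_t$. Consequently $\min_{t\leq T}\Exp{\Phi(x_t^+)-\Phi^*}\leq 2\min_{t\leq T}A_t$, so it suffices to drive $\min_{t\leq T}A_t$ below $\varepsilon/2$.

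\textbf{Step 3 (solving the recursion).} The hard part will be solving $A_{t+1}\leq A_t - c\mu\stepsize_t A_t^{2/\alpha}+2\ell\sigma^2\stepsize_t^2$ with a step-size schedule that attains the stated $T$ uniformly in $\alpha\in[1,2]$. I would analyze it in two regimes. In the bias-dominated regime one drops the noise and tracks $A_{t+1}\leq A_t - c\mu\stepsize A_t^{2/\alpha}$: for $\alpha=2$ this is the geometric contraction $A_{t+1}\leq(1-c\mu\stepsize)A_t$ responsible for the $\log(\ell\Lambda_0/(\mu\varepsilon))$ factor, while for $\alpha\in[1,2)$ the substitution $u_t=A_t^{1-2/\alpha}$ linearizes the decay and gives $A_t=O((\mu\stepsize\,t)^{-\alpha/(2-\alpha)})$, i.e.\ the $\varepsilon^{-(2-\alpha)/\alpha}$ factor. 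In the noise-dominated regime one fixes $\stepsize\asymp\mu\varepsilon^{2/\alpha}/(\ell\sigma^2)$ so that the equilibrium level $A_\infty\asymp(\ell\sigma^2\stepsize/\mu)^{\alpha/2}$ sits at scale $\varepsilon$; substituting this $\stepsize$ into the bias-phase length reproduces the $\tfrac{\ell\Lambda_0\sigma^2}{\mu^2}\varepsilon^{-(4-\alpha)/\alpha}$ term. The two delicate points I anticipate are (i) summing the nonlinear term $A_t^{2/\alpha}$ cleanly across the whole range $\alpha\in[1,2]$, which I would handle by an induction over dyadic accuracy levels, and (ii) exhibiting a single non-increasing sequence $\{\stepsize_t\}$ with $\stepsize_0\leq1/(2\ell)$ that simultaneously satisfies the bias and noise balancing conditions (for $\alpha=2$ this forces a switch from a constant to a $\Theta(1/t)$ schedule so as not to incur a spurious logarithm in the noise term). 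Combining the two regimes and optimizing the switching time then gives the claimed complexity.
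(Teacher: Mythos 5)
Your proposal is correct and follows the same overall strategy as the paper's proof: the same one-step Lyapunov recursion inherited from the proof of Theorem~\ref{thm:SMD} with $\rho=2\ell$, the same combination of Assumption~\ref{ass:KL} with the bound $\Lambda_t\leq 2\,\Exp{\Phi(x_t)-\Phi^*}$ and Jensen's inequality to close a scalar recursion on $\Lambda_t=\Exp{\lambda_t}$, and the same final passage from the Bregman Moreau envelope to $\Phi(x_t^+)$. Two points of divergence are worth noting. First, your Step~2 re-derives from scratch (with a harmless extra factor of $2$) what the paper isolates as Lemma~\ref{le:MEnv_Phi_connection}; the paper's $x^+$ there is taken with prox parameter $\rho-\ell=\ell$ rather than your $3\ell$, but both arguments rest on the same pairing of the upper and lower relative-smoothness inequalities, so this is only a matter of which ``one mirror-descent step'' is reported. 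Second, and more substantively, for Step~3 the paper sidesteps the nonlinear recursion entirely: it assumes $\Lambda_\tau\geq\varepsilon$ for all $\tau\leq t$ (otherwise the target accuracy is already reached), uses $\Lambda_t^{2/\alpha}\geq\varepsilon^{(2-\alpha)/\alpha}\Lambda_t$ to linearize, and then invokes a ready-made step-size lemma (Lemma~\ref{le:L3_noise_adaptive_rate}, constant step size followed by a $\Theta(1/t)$ decay) that yields an $\exp(-c\,T)+\cO(1/T)$ bound in one stroke, automatically avoiding the spurious logarithm in the noise term that you anticipate at $\alpha=2$ and treating all $\alpha\in[1,2]$ uniformly. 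Your plan of attacking the nonlinear recursion directly via the substitution $u_t=A_t^{1-2/\alpha}$ together with a two-regime/dyadic analysis would also work and is more self-contained, but the linearization-at-accuracy-$\varepsilon$ trick is precisely what makes the paper's proof short; you may want to adopt it to discharge the two ``delicate points'' you flag without extra work.
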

\end{tcolorbox}
    The above result implies that after at most $T$ iterations \algname{SMD} will find a point $x_t$ which is one Mirror Descent step away from a point that is $\varepsilon$-close to $\Phi^*$ in the function value. In the unconstrained Euclidean setting, the above sample complexity matches with that of \algname{SGD} \citep{KL_PAGER_Fatkhullin}.\footnote{We use a different step-size sequence $\cb{\stepsize_{t}}_{t\geq0}$ compared to $\stepsize_{t} = 1/t^{\zeta}$, $\zeta > 0$ used in \citep{KL_PAGER_Fatkhullin}, see Appendix~\ref{sec:Global_GProxPL}. This allows us to derive noise adaptive rates, i.e., if $\sigma = 0$, then we recover the iteration complexity of (deterministic) mirror descent.} In the special case $\alpha = 2$, it implies the linear convergence rate in deterministic case and $\cO\rb{\varepsilon^{-1}}$ sample complexity in the stochastic case. The linear convergence and $\cO\rb{\varepsilon^{-1}}$ sample complexity of \algname{SMD} were previously shown under relative smoothness and relative strong convexity, e.g., in \citep{lu2018relatively,hanzely2021fastest}. Our result under Assumption~\ref{ass:KL} is more general since the relative strong convexity of $F(\cdot)$ implies \eqref{eq:KL} with $\alpha=2$, see Lemma~\ref{le:rel_SC_Prox_PL}. It is also known that such rates are optimal for $\alpha = 2$ in the Euclidean setting \citep{yue2023lower,agarwal2009information}.

  \section{NEW INSIGHTS FOR MACHINE LEARNING}\label{sec:new_insights_ML}
  In this section, we dive into the context of several machine learning applications. We illustrate how each of our Theorems~\ref{thm:SMD}, \ref{thm:high_prob_subgauss} and \ref{thm:SGD_KL} can be applied to specific problems; either yielding faster convergence than existing algorithms or allowing us to design provably convergent schemes. Interestingly, the presented problems are very diverse and allow us to demonstrate different aspects of our assumptions. In all presented examples, we crucially rely on the choice of nonsmooth DGFs, which was not theoretically possible to handle in the prior work on \algname{SMD}. 
  
  \subsection{DP Learning in $\ell_2$ and $\ell_1$ Settings}\label{sec:DP_learning}
  
  In differentially private (DP) stochastic nonconvex optimization, the goal is to design a \textit{private algorithm} to minimize the population loss of type \eqref{eq:composite_problem} over a subset of a $d$-dimensional space given $n$ i.i.d.\,samples, $\xi^1, \ldots, \xi^n$, drawn from a distribution $P$. Denote by $S := \cb{\xi^1, \ldots, \xi^n}$, the sampled dataset, and by $\nabla F(x) := \sum_{i=1}^{n} \nabla f(x, \xi^i)$, the gradient of the empirical loss $ F(x) := \sum_{i=1}^{n} f(x, \xi^i)$ based on dataset $S$. The classical notion to quantify the privacy quality is  
  
  \begin{definition}[$(\epsilon, \delta)$-DP \citep{dwork2006calibrating}]\label{def:DP}
  	A randomized algorithm $\mathcal{M}$ is $(\epsilon, \delta)$-differentially private if for any pair of datasets $S, S^{\prime}$ that differ in exactly one data point and for any event $\mathcal{Y} \subseteq R a n g e(\mathcal{M})$ in the output range of $\mathcal{M}$, we have
  	$$
  	\Pr\rb{ \mathcal{M}(S) \in \mathcal{Y} } \leq e^\epsilon \Pr\rb{\mathcal{M}\left(S^{\prime}\right) \in \mathcal{Y} } + \delta ,
  	$$
  	where the probability is w.r.t.\,the randomness of $\mathcal{M}$.
  \end{definition}
  There are several common techniques to ensure privacy, which include output \citep{wu2017bolt,zhang2017efficient}, objective function \citep{chaudhuri2011differentially,kifer2012private,iyengar2019towards} or gradient perturbations \citep{bassily2014private,wang2017differentially}. Most recent works on nonconvex DP learning focus on the latter approach. The key idea of gradient perturbation is to inject an artificial Gaussian noise $b_t \sim \mathcal{N}(0, \sigma_{\text{G}}^2 I_d )$ into the evaluated gradient. 
  The parameter $\sigma_{\text{G}}^2$ should be carefully chosen to ensure privacy, which can be guaranteed by the moments accountant
  \begin{lemma}[Theorem 1 in \citep{abadi2016deep}]
  	Assume that $\norm{\nabla F(x)}_2 \leq G$ for all $x\in \cX$. There exist constants $c_1, c_2 > 0$ so that given the number of iterations $T \geq 0$, for any $\epsilon \leq c_1 T$ , the gradient method using $\nabla F(x_t) + b_t$, $b_t \sim \mathcal{N}(0, \sigma_{\text{G}}^2 I_d )$ as the gradient estimator is $(\epsilon, \delta)$-DP for any $\delta > 0$ if 
  	$
  	\sigma_{\text{G}}^2 \geq c_2 \frac{G^2 T \log\rb{\nfr 1 \delta}}{n^2 \epsilon^2} .
  	$
  \end{lemma}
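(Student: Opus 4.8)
The plan is to establish the privacy guarantee through the \emph{moments accountant} framework, which tracks the logarithm of the moment generating function of the privacy loss and thereby yields a tighter composition than the classical strong composition theorem. For neighboring datasets $S, S'$ differing in one sample and an output $o$ of the full $T$-step mechanism $\mathcal{M}$, define the privacy loss $c(o) = \log \frac{\Pr[\mathcal{M}(S) = o]}{\Pr[\mathcal{M}(S') = o]}$ and the log-moment $\alpha_{\mathcal{M}}(\lambda) := \max_{S, S'} \log \mathbb{E}_{o \sim \mathcal{M}(S)}[\exp(\lambda c(o))]$. The argument then proceeds in three steps: (i) bound $\alpha$ for a single noisy-gradient step, (ii) accumulate over the $T$ steps using subadditivity of $\alpha$, and (iii) convert the accumulated moment bound into an $(\epsilon, \delta)$ statement via a tail bound, optimizing over the moment order $\lambda$.

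First I would control the $\ell_2$-sensitivity of one iteration. Since $\nabla F(x) = \sum_{i=1}^n \nabla f(x, \xi^i)$ and replacing a single $\xi^i$ alters only one summand, the per-sample gradient norm bound implied by $\|\nabla F(x)\|_2 \le G$ (together with the $1/n$ normalization of the estimator) gives a sensitivity $\Delta = O(G/n)$. The update at step $t$ releases $\nabla F(x_t) + b_t$ with $b_t \sim \mathcal{N}(0, \sigma_{\text{G}}^2 I_d)$, so, conditioned on the past, this is a Gaussian mechanism with sensitivity $\Delta$. For two Gaussians whose means differ by $\Delta$, the log-moment is the standard R\'enyi-divergence computation $\alpha_t(\lambda) \le \frac{\lambda(\lambda+1)\Delta^2}{2\sigma_{\text{G}}^2}$, valid for $\lambda$ in the relevant range.

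Next I would invoke the composability of the moments accountant: because the iterates form an adaptive sequence, the log moment generating function of the total privacy loss factorizes conditionally, so $\alpha_{\mathcal{M}}(\lambda) \le \sum_{t=0}^{T-1} \alpha_t(\lambda) \le \frac{T \lambda(\lambda+1)\Delta^2}{2\sigma_{\text{G}}^2}$. Finally, the tail bound states that $\mathcal{M}$ is $(\epsilon, \delta)$-DP whenever $\delta \ge \exp(\alpha_{\mathcal{M}}(\lambda) - \lambda \epsilon)$ for some $\lambda$. Minimizing the exponent $\frac{T \lambda^2 \Delta^2}{2\sigma_{\text{G}}^2} - \lambda \epsilon$ over $\lambda$ gives $\lambda^\star \asymp \frac{\epsilon \sigma_{\text{G}}^2}{T \Delta^2}$ with minimal value $\asymp -\frac{\epsilon^2 \sigma_{\text{G}}^2}{2 T \Delta^2}$; requiring this to be at most $\log \delta$, i.e. $\frac{\epsilon^2 \sigma_{\text{G}}^2}{T \Delta^2} \ge 2 \log(1/\delta)$, and substituting $\Delta = O(G/n)$ yields exactly $\sigma_{\text{G}}^2 \ge c_2 \frac{G^2 T \log(1/\delta)}{n^2 \epsilon^2}$. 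The side condition $\epsilon \le c_1 T$ is what keeps the optimizer $\lambda^\star$ inside the range in which the single-step moment bound remains valid.

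The main obstacle, and the source of the improvement over naive composition, is the single-step moment bound together with the admissibility of the optimal $\lambda^\star$: the composition must be performed at the level of the moment generating function (equivalently, in terms of R\'enyi divergence) rather than through $(\epsilon, \delta)$ guarantees directly, since the latter would introduce a spurious $\sqrt{\log(1/\delta)}$ factor. Making the constants $c_1, c_2$ explicit requires carefully tracking the admissible range of $\lambda$, which is precisely the technical heart of the \citet{abadi2016deep} analysis.
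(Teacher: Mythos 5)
The paper does not actually prove this lemma: it is imported (in a slightly simplified, full-gradient form) from Theorem~1 of \citet{abadi2016deep}, so there is no in-paper proof to compare against. Your sketch is a faithful outline of the original moments-accountant argument: the single-step log-moment (R\'enyi) bound for the Gaussian mechanism, conditional additivity of the log-moments over the $T$ adaptive steps, and the tail-bound conversion $\delta \ge \exp(\alpha_{\mathcal M}(\lambda) - \lambda\epsilon)$ with the moment order optimized, the side condition $\epsilon \le c_1 T$ keeping $\lambda^\star$ in the admissible range. Two caveats are worth flagging. First, the sensitivity step is not as automatic as you state: a bound $\|\nabla F(x)\|_2 \le G$ on the aggregate gradient does \emph{not} imply a per-sample bound on $\|\nabla f(x,\xi^i)\|_2$, which is what is actually needed to conclude $\Delta = O(G/n)$; the original theorem assumes per-example gradient clipping (or a per-example Lipschitz bound), and the paper's restatement is already loose on this point, so your gloss inherits that looseness rather than introducing a new error. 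Second, the original Theorem~1 concerns a \emph{subsampled} Gaussian mechanism with sampling rate $q$ and condition $\epsilon \le c_1 q^2 T$, and the technical heart of \citet{abadi2016deep} is the moment bound for the subsampled mechanism; in the full-batch form stated here (effectively $q=1$) that machinery collapses to the plain Gaussian-mechanism computation you use, which is why your simpler single-step bound suffices for this version of the statement.
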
	
\textbf{$\ell_2$ Setting.} For instance, the \algname{DP-Prox-GD} iterates 
$$
x_{t+1} = \opn{prox}_{\stepsize_{t} r }(x_t - \eta_t ( \nabla F(x_t) + b_t ) ) , \, \, b_t \sim \mathcal{N}(0, \sigma_{\text{G}}^2 I_d ) ,
$$
where $\opn{prox}_{\stepsize_{t} r } (x) := \argmin_{y\in \cX} \rb{ r(y) + \fr{1}{2 \stepsize_{t} } \sqnorm{y -x}_2 }$. 
	Our Theorem~\ref{thm:high_prob_subgauss} immediately implies the high probability utility bound for \algname{DP-Prox-GD}:
   \begin{equation}\label{eq:DP-Prox-GD}
   	\frac{1}{T} \sum_{t=0}^{T-1} \Exp{ \cD_{5\ell}(x_t) } = \cO\rb{ \frac{ \sqrt{ d \log\rb{\nfr 1 \delta} \log(\nfr 1 \beta) }  }{n \epsilon} } ,
      \end{equation}
   where $\beta \in (0, 1)$ is the failure probability, see Corollary~\ref{cor:DP_Prox_GD} for more details and the dependence on omitted constants. 
   To our knowledge, nonconvex utility bound of \algname{DP-Prox-GD} was previously studied only in the unconstrained setting ($r(\cdot)=0$, $\cX = \R^d$), e.g.,  \citep{wang2017differentially,wang2019differentially,zhou2020private} or in expectation, e.g., \citep{wang2019differentiallyAAI}. 
   Our bound \eqref{eq:DP-Prox-GD} generalizes these works to non-trivial $\cX$ and $r(\cdot)$. 
    
  \textbf{$\ell_1$ Setting.} One issue with the above utility bound is the polynomial dimension dependence. In certain cases, this dependence can be significantly improved, e.g., when the optimization is defined on a unit simplex $\cX = \cb{x\in \R^d \vert \sum_{i=1}^d x^{(i)} \leq 1, \, x^{(i)} \geq 0}$. Notably, it makes a big difference which norm we use to measure the variance of $b_t$, e.g., $\bb E\sqnorm{b_t}_2 = d \,\sigma_G^2$ and $\bb E \sqnorm{b_t}_{\infty} \leq 2 \log(d) \, \sigma_G^2$. Therefore, using $\norm{\cdot}_{\infty}$ norm is more favorable. Motivated by this difference, we consider the differentially private mirror descent (\algname{DP-MD}):
$$x_{t+1} = \argmin_{y\in \cX}  \stepsize_t (\langle \nabla F (x_t) + b_t , y \rangle + r(y) ) +  D_{\omega}(y, x_t)    , $$ 
  where $b_t \sim \mathcal{N}(0, \sigma_{\text{G}}^2 I_d ) $ and $\omega(x) = \sum_{i=1}^d x^{(i)} \log x^{(i)} $.\footnote{It is known that such $\omega(\cdot)$ is $1$-strongly convex w.r.t.$\,\norm{\cdot}_1$ on a unit simplex \citep{beck2003mirror}.} Using our high probability guarantee Theorem~\ref{thm:high_prob_subgauss}, we can derive 
  \begin{corollary}\label{cor:DP_SMD}
  	Let $F(\cdot)$ be $(\ell, \omega)$-smooth for $\omega(\cdot)$, $\cX$ defined above, and $\norm{\nabla F(x)}_2 \leq G$ for all $x \in \cX$. Set $\eta_t = \frac{1}{2 \ell}$, $T = \frac{n \epsilon \sqrt{\ell}}{G \sqrt{\log(d) \log\rb{\nfr 1 \delta} \log \rb{\nfr 1 \beta }}}$, $\lambda_0 := \Phi(x_0) - \Phi^*$. Then \algname{DP-MD} is $(\epsilon, \delta)$-DP and with probability $1-\beta$ satisfies \footnote{The result can be easily extended to the case when only stochastic gradients $\nabla f(x_t, \xi_t^i)$ are used instead of $\nabla F(x_t)$. }  
  	$$
  	\frac{1}{T} \sum_{t=0}^{T-1}  \cD_{5\ell}(x_t)  = \cO\rb{ \frac{G \sqrt{\ell \lambda_0 \log(d) \log\rb{\nfr 1 \delta} \log\rb{\nfr 1 \beta}} }{n \epsilon} } ,
  	$$
  \end{corollary}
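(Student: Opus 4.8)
The plan is to split the argument into a privacy part and a utility part, exploiting the fact that \algname{DP-MD} is exactly the \algname{SMD} update \eqref{eq:SMD} run with the stochastic gradient $\nabla f(x_t,\xi_t) = \nabla F(x_t) + b_t$, so that the noise appearing in Assumption~\ref{ass:subgauss} is precisely the injected Gaussian, $\psi_t = b_t$. For privacy, I would note that the iterate sequence $x_1,\dots,x_T$ is a deterministic post-processing of the noisy gradients $\nabla F(x_0)+b_0,\dots,\nabla F(x_{T-1})+b_{T-1}$; the mirror-descent form of the map is irrelevant to the privacy analysis. Hence, by post-processing immunity of differential privacy, it suffices to privatize the noisy gradient sequence, which the moments accountant lemma handles directly: choosing the Gaussian variance to meet its threshold, $\sigma_{\text{G}}^2 = \Theta\big(G^2 T\log(1/\delta)/(n^2\epsilon^2)\big)$, delivers $(\epsilon,\delta)$-DP for the prescribed $T$ (which, for $n$ large enough, obeys the lemma's requirement $\epsilon\le c_1 T$).

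For the utility bound, the first task is to verify Assumption~\ref{ass:subgauss} in the entropic geometry. Since $\omega$ is the Shannon entropy and $\cX$ is the simplex, the relevant dual norm is $\norm{\cdot}_\infty$, so I must control $\norm{b_t}_\infty = \max_{i\in[d]}|b_t^{(i)}|$ with $b_t^{(i)}\sim\mathcal N(0,\sigma_{\text{G}}^2)$ i.i.d. A standard Gaussian-maximum tail estimate (a union bound over $\Pr(|b_t^{(i)}|>s)\le 2\exp(-s^2/2\sigma_{\text{G}}^2)$) shows that $\norm{b_t}_\infty$ is $\sigma$-sub-Gaussian in the sense of Assumption~\ref{ass:subgauss} with $\sigma = \cO(\sigma_{\text{G}}\sqrt{\log d})$, that is $\sigma^2 = \cO(\sigma_{\text{G}}^2\log d)$. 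This is exactly where the entropic DGF pays off: the effective noise level measured in the dual norm inflates by only a $\log d$ factor, rather than the full dimension $d$ that $\norm{b_t}_2^2$ would contribute in the Euclidean setup.

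With Assumption~\ref{ass:subgauss} verified, I would invoke Theorem~\ref{thm:high_prob_subgauss} with the constant step-size $\eta_t = 1/(2\ell)$, for which $\sum_{t=0}^{T-1}\eta_t = T/(2\ell)$ and $\sum_{t=0}^{T-1}\eta_t^2 = T/(4\ell^2)$. Substituting these together with $\wt\lambda_0 = 3\lambda_0 + (4\sigma^2/\ell)\log(1/\beta)$ collapses the guarantee, with probability $1-\beta$, to
\begin{equation*}
\frac1T\sum_{t=0}^{T-1}\cD_{5\ell}(x_t) \;\le\; \frac{15\ell\lambda_0}{T} + \frac{20\sigma^2\log(1/\beta)}{T} + 15\sigma^2 .
\end{equation*}
The essential coupling now appears: the privacy constraint forces $\sigma^2 = \cO(\sigma_{\text{G}}^2\log d) = \cO\big(G^2 T\log(1/\delta)\log d/(n^2\epsilon^2)\big)$ to grow \emph{linearly} in $T$, so the $15\sigma^2$ term does not vanish and $T$ must be tuned against the noise rather than sent to infinity. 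Plugging in the prescribed $T = n\epsilon\sqrt\ell/\big(G\sqrt{\log d\,\log(1/\delta)\,\log(1/\beta)}\big)$ renders the middle term lower order (of size $\cO(1/(n\epsilon)^2)$) and brings the remaining two terms to the advertised order $\cO\big(G\sqrt{\ell\lambda_0\log d\,\log(1/\delta)\,\log(1/\beta)}/(n\epsilon)\big)$, with the function-value term $15\ell\lambda_0/T$ setting the rate (and carrying the $\sqrt{\log(1/\beta)}$ factor).

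I expect the main obstacle to be the sub-Gaussian estimate for $\norm{b_t}_\infty$: one must establish not merely that its second moment is $\cO(\sigma_{\text{G}}^2\log d)$, but the full MGF-of-square bound defining $\sigma$-sub-Gaussianity with $\sigma=\cO(\sigma_{\text{G}}\sqrt{\log d})$, since this is exactly the quantity Theorem~\ref{thm:high_prob_subgauss} consumes. A secondary, purely bookkeeping issue is that because $\sigma^2\propto T$ the usual ``large-$T$ kills the noise'' intuition fails; the stated $T$ is a convenient (and mildly suboptimal) choice that balances the problem-dependent and noise terms and yields a clean closed form.
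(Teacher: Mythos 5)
Your proposal is correct and follows essentially the same route as the paper's proof: bound $\norm{b_t}_\infty$ as $\sigma$-sub-Gaussian with $\sigma^2 = \cO(\sigma_{\text{G}}^2\log d)$ (the paper does this via its maximal-tail lemma applied to the $2d$ variables $\pm b_t^{(i)}$), invoke Theorem~\ref{thm:high_prob_subgauss} with the constant step-size $1/(2\ell)$, and substitute the privacy-mandated $\sigma_{\text{G}}^2\propto T$ before balancing with the prescribed $T$. The only additions are your explicit post-processing argument for privacy and your (valid) caveat about needing the full MGF-of-square bound rather than a second-moment bound, both of which the paper handles implicitly.
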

 
 The above result establishes a (nearly) dimension independent utility bound for \algname{DP-MD}, and improves the one of \algname{DP-Prox-GD} in \eqref{eq:DP-Prox-GD} by a factor of $\sqrt{\nfr{d}{\log(d)}}$. Several previous works in DP learning literature have shown the improved dimension dependence in $\ell_1$ setting, e.g.,  \citep{Asi_privateSCO_L12021,gopi2023private,bassily2021non_Eucl,bassily2021differentially,wang2019differentiallyAAI}. However, \citet{Asi_privateSCO_L12021,gopi2023private,bassily2021non_Eucl} assume convex $F(\cdot)$, and, therefore, are not directly comparable with our result. \citet{bassily2021differentially}, and \citet{wang2019differentiallyAAI} obtain nonconvex utility bounds in expectation, however, their techniques are different. Both above mentioned works rely on the linear minimization oracle and derive convergence on the Frank-Wolfe (FW) gap.\footnote{At least when restricted to Euclidean setting, FW gap is a weaker convergence measure than BFBE, see Lemma~\ref{le:FB_env} and \ref{le:FWGap_BGM}.} Moreover, \citet{bassily2021differentially} use a complicated double loop algorithm based on momentum-based variance reduction technique.  
    
  \subsection{Policy Optimization in Reinforcement Learning (RL)}\label{sec:SPG_in_RL}
  
  Consider a discounted Markov decision process (DMDP) $M=\{\mathcal{S}, \mathcal{A}, \mathcal{P}, R, \gamma, p\}$. Here $\mathcal{S}$ is a state space with cardinality $|\mathcal{S}|$; $\mathcal{A}$ is an action space with cardinality $|\mathcal{A}|$; $\mathcal{P}$ is a transition model, where $\mathcal{P}(s^{\prime} | s, a)$ is the transition probability to state $s^{\prime}$ from a given state $s$ when action $a$ is applied; $R : \mathcal{S}\times \mathcal{A} \rightarrow [0, 1]$ is a reward function for a state-action pair $(s, a)$; $\gamma \in [0,1)$ is the discount factor; and $p$ is the initial state distribution. Being at state $s_h \in \mathcal{S}$ an RL agent takes an action $a_h \in \mathcal{A}$ and transitions to another state $s_{h+1}$ according to $\mathcal{P}$ and receives an immediate reward $r_h\sim R(s_h, a_h)$. A (stationary) policy $\pi$ specifies a (randomized) decision rule depending only on the current state $s_h$, i.e., for each $s \in \mathcal S$, $\pi_s  \in \Delta(\mathcal A)$ determines the next action $a \sim \pi_s$, where $\Delta(\mathcal A) : = \cb{\pi_s \in \R^{|\mathcal A|} \vert  \sum_{s \in \mathcal{S}} \pi_{s a} = 1 , \, \pi_{s a} \geq 0 \text{ for all } a \in \mathcal{A} }$ denotes the probability simplex supported on $\mathcal A$. The goal of RL agent is to maximize 
  \begin{eqnarray}\label{eq:RL_problem}
  V_{p}^+(\pi) := \Exp{\sum_{h=0}^{\infty} \gamma^h r_h } , \quad \pi \in \cX := \Delta(\mathcal A)^{|\mathcal S |} ,
    \end{eqnarray}
  where expectation is w.r.t.\,the initial state distribution $s_0 \sim p $, the transition model $\mathcal{P}$ and the policy $\pi$. We define $V_{p}(\pi) :=  - V_{p}^+(\pi) $ and adopt the minimization formulation of DMDP, i.e.,  $\min_{\pi \in \cX} V_{p}(\pi)$.
  
  It is known that $V_{p}(\pi)$ is smooth, but nonconvex in $\pi$. Moreover, a property similar to Proximal P{\L} (Assumption~\ref{ass:KL}) was recently established for \eqref{eq:RL_problem} \citep{agarwal-et-al21,xiao22}. That is we have for any $\pi, \pi' \in \cX$:
  $$
  \left\|\nabla V_p(\pi)-\nabla V_p\left(\pi^{\prime}\right)\right\|_{2, 2} \leq L_F \left\|\pi - \pi^{\prime}\right\|_{2, 2} ,
  $$
  \begin{equation}\label{eq:VaGD}
  V_p(\pi)-V_p^{\star} \leq C \max_{\pi^{\prime} \in \cX} \left\langle\nabla V_\mu(\pi), \pi-\pi^{\prime}\right\rangle , 
    \end{equation}
  where $L_F := \frac{2 \gamma |\mathcal A| }{(1-\gamma)^3}$, $C := \frac{1}{1-\gamma}\left\|\frac{d_p\left(\pi^{\star}\right)}{\mu}\right\|_{\infty}$, $\norm{\cdot}_{2,2}$ denotes the Frobenius norm (Lemma 4 and 54 in \citep{agarwal-et-al21}).
  
  Therefore, this problem serves well to demonstrate the application of our theory to show convergence of policy gradient (PG) methods. PG methods is the promising class of algorithms that generate a sequence of policies $\pi_t$ by evaluating the gradients $\nabla V_{\mu}(\pi_t)$ (or their stochastic estimates $\widehat\nabla V_{\mu}(\pi_t)$), where $\mu \in \Delta(\mathcal A)$ is some distribution (not necessarily equal to $p$). One of the most basic variants is the 
  
  \textbf{Projected Stochastic Policy Gradient:}
  $$
\text{\algname{P-SPG}:} \quad  \pi_{t+1} = \opn{proj}_{\cX}\rb{ \pi_t - \eta_t \widehat\nabla V_{\mu}(\pi_t) } ,
  $$
  where $\opn{proj}_{\cX}(\cdot)$ denotes the Euclidean projection onto $\cX$. Given that the variance of stochastic gradients $\widehat\nabla V_{\mu}(\pi_t)$ is bounded in the Euclidean norm by $\sigma_F^2$,\footnote{The variance of $\widehat \nabla V_{\mu}(\cdot)$ can be bounded under reasonable assumptions or using appropriate exploration strategies, e.g., $\epsilon$-greedy or Boltzmann, see \citep{daskalakis2020independent,cesa2017boltzmann,xiao22,Johnson_Opt_Conv_PG_2023}.}  our Theorems~\ref{thm:SMD} and \ref{thm:SGD_KL} imply the following 
 
   \begin{corollary}\label{cor:PSPG}
  	For any $\varepsilon >0$,  \algname{P-SPG} guarantees:  
  	
  	(i) $
  	\min_{0\leq t\leq T-1}\Exp{ \cD_{3 L_F }(\pi_t) } \leq \varepsilon^2$ after
  	$$T = \cO\rb{\frac{|\mathcal A| }{(1-\gamma)^3 \varepsilon^2} + \frac{\sigma_F^2 |\mathcal A| }{(1-\gamma)^3 \varepsilon^4} } , $$
  	
  	(ii) $\min_{t\leq T}\Exp{V_{p}(\pi_t^+) - V_{p}^*} \leq \varepsilon$ after 
  	$$
  	T =  \wt \cO\rb{ \frac{ |\mathcal A | |\mathcal S | }{ (1-\gamma)^{5} \varepsilon} +  \fr{ \sigma_F^2 |\mathcal A |^2 |\mathcal S |^2 }{(1-\gamma)^7 \varepsilon^3}   }  . 
  	$$
  \end{corollary}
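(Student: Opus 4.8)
The plan is to observe that \algname{P-SPG} is exactly the \algname{SMD} recursion \eqref{eq:SMD} with $r\equiv 0$, $\omega(x)=\tfrac12\sqnorm{x}_2$ and $\cX=\Delta(\mathcal{A})^{|\mathcal{S}|}$, so that it falls directly under Theorem~\ref{thm:SMD}. First I would check the two standing assumptions: the Frobenius-norm gradient-Lipschitz property $\norm{\nabla V_p(\pi)-\nabla V_p(\pi')}_{2,2}\le L_F\norm{\pi-\pi'}_{2,2}$ from \eqref{eq:VaGD} implies Assumption~\ref{ass:rel_smooth} with $\ell=L_F$ (as noted after Assumption~\ref{ass:rel_smooth}, since the geometry is Euclidean), and the assumed $\ell_2$-variance bound $\sigma_F^2$ is precisely Assumption~\ref{ass:BV} because the Euclidean norm is self-dual. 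I would then invoke the constant-step-size bound of Theorem~\ref{thm:SMD}, namely $\Exp{\cD_{3L_F}(\bar\pi_T)}=\cO(L_F\lambda_0/T+\sqrt{\sigma_F^2 L_F\lambda_0/T})$, and use $\min_{0\le t\le T-1}\Exp{\cD_{3L_F}(\pi_t)}\le \Exp{\cD_{3L_F}(\bar\pi_T)}$ since $\bar\pi_T$ is drawn from the iterates. Setting the right-hand side $\le\varepsilon^2$ and solving for $T$ gives $T=\cO(L_F\lambda_0/\varepsilon^2+\sigma_F^2 L_F\lambda_0/\varepsilon^4)$; substituting $L_F=\Theta(|\mathcal{A}|/(1-\gamma)^3)$ and treating the initial optimality gap $\lambda_0$ as a constant (it is finite because $V_p$ is bounded in $[-1/(1-\gamma),0]$) yields the claimed rate.

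\textbf{Part (ii).} Here the plan is to verify that the gradient-domination inequality \eqref{eq:VaGD} is an instance of the $\alpha$-Bregman Prox-P{\L} condition (Assumption~\ref{ass:KL}) and then apply Theorem~\ref{thm:SGD_KL}. Write $\mathrm{gap}(\pi):=\max_{\pi'\in\cX}\langle\nabla V_p(\pi),\pi-\pi'\rangle\ge 0$; taking the sampling distribution equal to $p$ so that $\Phi=V_p$, inequality \eqref{eq:VaGD} reads $V_p(\pi)-V_p^*\le C\,\mathrm{gap}(\pi)$. The crucial step is a lower bound of the BFBE by the squared gap. Since $r\equiv0$ and $\omega=\tfrac12\sqnorm{\cdot}_2$, we have $\cD_\rho(\pi)=2\rho\max_{y\in\cX}\big[\langle\nabla V_p(\pi),\pi-y\rangle-\tfrac{\rho}{2}\sqnorm{\pi-y}_2\big]$. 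I would restrict the maximization to the segment $y_\tau=(1-\tau)\pi+\tau y^\star$, $\tau\in[0,1]$, where $y^\star$ attains $\mathrm{gap}(\pi)$ and $y_\tau\in\cX$ by convexity; this reduces the problem to a one-dimensional concave quadratic in $\tau$ whose (routine) maximization gives $\cD_\rho(\pi)\ge \mathrm{gap}(\pi)^2/\mathrm{diam}_2(\cX)^2$, independently of $\rho$ -- this is the quantitative form of the FW-gap/BGM comparison of Lemma~\ref{le:FWGap_BGM} together with Lemma~\ref{le:FB_env}. Combining with \eqref{eq:VaGD} yields $\cD_\rho(\pi)\ge \frac{1}{C^2\mathrm{diam}_2(\cX)^2}(V_p(\pi)-V_p^*)^2$ for every $\rho\ge 3L_F$, which is exactly Assumption~\ref{ass:KL} with $\alpha=1$ and $\mu=\tfrac{1}{2}C^{-2}\mathrm{diam}_2(\cX)^{-2}$.

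Finally, I would substitute $\alpha=1$ into Theorem~\ref{thm:SGD_KL}; then $(2-\alpha)/\alpha=1$ and $(4-\alpha)/\alpha=3$, so $T=\wt\cO\big(\ell\Lambda_0\mu^{-1}\varepsilon^{-1}+\ell\Lambda_0\sigma_F^2\mu^{-2}\varepsilon^{-3}\big)$. Plugging in $\ell=L_F=\Theta(|\mathcal{A}|/(1-\gamma)^3)$, the simplex-product diameter $\mathrm{diam}_2(\cX)^2=2|\mathcal{S}|$, and $C=\Theta(1/(1-\gamma))$ after absorbing the distribution-mismatch factor $\norm{d_p(\pi^\star)/p}_\infty$, so that $\mu=\Omega((1-\gamma)^2/|\mathcal{S}|)$, yields the stated two-term complexity. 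The main obstacle is the reduction in Part (ii): converting the linearly stated gradient domination \eqref{eq:VaGD} into the quadratic ($2/\alpha=2$) Prox-P{\L} form by establishing $\cD_\rho(\pi)\gtrsim \mathrm{gap}(\pi)^2$. The scaling-along-the-FW-direction argument makes this transparent, but one must verify it uniformly for all $\rho\ge 3L_F$, handling both the regime where the optimal step length $\tau^\star\le1$ and the regime where it is truncated at $1$. The remaining bookkeeping of constants, and the fact that $\min_t$ is controlled by the averaged/randomized iterate, are routine.
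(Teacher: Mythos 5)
Your proposal is correct and follows the same overall strategy as the paper: part (i) is a direct instantiation of Theorem~\ref{thm:SMD} in the Euclidean geometry, and part (ii) converts the variational gradient domination \eqref{eq:VaGD} into Assumption~\ref{ass:KL} with $\alpha=1$ and then invokes Theorem~\ref{thm:SGD_KL}. The one place where you genuinely deviate is the key comparison step in part (ii): you lower-bound $\cD_\rho(\pi)$ by $\mathrm{gap}(\pi)^2/\mathrm{diam}_2(\cX)^2$ directly, by restricting the maximization defining the BFBE to the segment toward the Frank--Wolfe maximizer and optimizing the resulting concave quadratic in $\tau$. The paper instead chains two lemmas: Lemma~\ref{le:FWGap_BGM} gives $\mathrm{gap}(\pi)\le (D_{\cX}+\rho^{-1}G)\sqrt{\Delta_\rho^+(\pi)}$, and Lemma~\ref{le:FB_env} converts $\Delta_\rho^+$ into $\cD_{\rho/2}$ (see Lemma~\ref{le:grad_dom} in the appendix). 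Your route is more direct and avoids the intermediate gradient-mapping quantity, but the two arguments are quantitatively the same, and the truncation issue you flag (the regime $\tau^\star>1$) is exactly what the paper's condition $\rho\ge G_{V,\norm{\cdot}_{2,2}}/D_{\cX,\norm{\cdot}_{2,2}}$ rules out; since $\mathrm{gap}(\pi)\le G D_{\cX}$, the unconstrained maximizer satisfies $\tau^\star\le 1$ whenever $\rho\ge G/D_{\cX}$, and you should check that $\rho=3L_F$ meets this threshold (it does, up to constants, for the DMDP bounds on $G$). Your final bookkeeping with $\mu=\Omega((1-\gamma)^2/|\mathcal S|)$ reproduces the first term of (ii) exactly and in fact yields $\sigma_F^2|\mathcal A||\mathcal S|^2(1-\gamma)^{-7}\varepsilon^{-3}$ for the second term, which is no worse than the stated $|\mathcal A|^2$ dependence.
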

  
 Convergence of \algname{P-SPG} was studied (in deterministic case) in \citep{agarwal-et-al21} using the notion of gradient mapping. Recently, an improved analysis was provided in \citep{xiao22} with iteration complexity $
  T =  \cO\rb{ \frac{ |\mathcal A | |\mathcal S |  }{ (1-\gamma)^{5} \varepsilon}  } $ to achieve $V_p(\pi_T) - V_p^* \leq \varepsilon$. If $\sigma_F = 0$, our iteration complexity in $(ii)$ recovers the one in \citep{xiao22}, albeit with a different proof.

 \textbf{Improving dependence on $|\mathcal A|$.}
  Notice that the above sample complexity bounds depend on the cardinality of the action space, which can be large in practice. The key reason for this is that the analysis of \algname{P-SPG} (\algname{Prox-SGD}) requires to measure the smoothness constant $L_F $ of $V_{p}(\pi)$ in the Euclidean (Frobenius) norm, which inevitably depends on the cardinality of the action space $|\mathcal A|$.  
  Let us instead consider $(2, 1)$-matrix norm $\norm{\cdot}_{2,1}$, i.e., $\|\pi \|_{2,1}^2 = \sum_{s\in \mathcal S} \rb{ \sum_{a\in \mathcal A} |\pi_{s a} | }^2$.\footnote{Its dual satisfies $\|\pi \|_{2,\infty}^2 = \sum_{s\in \mathcal S} \rb{ \max_{a\in \mathcal A} |\pi_{s a} | }^2$.}
  Now, we show that the dependence on $|\mathcal A|$ in the smoothness constant can be completely removed if $(2,1)$-norm is used.

  \begin{proposition}\label{prop:RL_smooth_PL}
  	For any $\pi, \pi^{\prime} \in \cX$, it holds that
  	$$
  	\left\|\nabla V_p(\pi)-\nabla V_p\left(\pi^{\prime}\right)\right\|_{2, \infty} \leq \frac{2 \gamma }{(1-\gamma)^3} \left\|\pi - \pi^{\prime}\right\|_{2, 1} . 
  	$$
  \end{proposition}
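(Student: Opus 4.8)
The plan is to reduce the $(2,1)\!\to\!(2,\infty)$ gradient-Lipschitz claim to a bound on the mixed second directional derivative, exploiting the duality $\|\cdot\|_{2,\infty} = (\|\cdot\|_{2,1})^*$ noted in the footnote. Since $V_p = -V_p^+$, the two functions have the same gradient-Lipschitz modulus, so I work with $V_p^+$. Fix $\pi,\pi'\in\cX$, set $u:=\pi'-\pi$ and $\pi_\alpha := \pi+\alpha u$, which stays in $\cX$ for $\alpha\in[0,1]$ by convexity. For an arbitrary test direction $v$ I would write
$$
\langle \nabla V_p^+(\pi') - \nabla V_p^+(\pi),\, v\rangle = \int_0^1 v^\top \nabla^2 V_p^+(\pi_\alpha)\, u \; d\alpha ,
$$
so that, taking the supremum over $\|v\|_{2,1}\le 1$, it suffices to show $|v^\top \nabla^2 V_p^+(\pi_\alpha)\, u| \le \tfrac{2\gamma}{(1-\gamma)^3}\|u\|_{2,1}\|v\|_{2,1}$ uniformly in $\alpha$. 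I use the state-space representation $V_p^+(\pi)=p^\top G_\pi\, r^\pi$, with induced row-stochastic matrix $M^\pi(s,s')=\sum_a \pi_{sa}\mathcal{P}(s'|s,a)$, reward vector $r^\pi(s)=\sum_a \pi_{sa}R(s,a)$, and resolvent $G_\pi:=(I-\gamma M^\pi)^{-1}$.

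The central object is the per-state mass $b_s^w:=\sum_a |w_{sa}|$ attached to a direction $w$, because $\|w\|_{2,1}^2=\sum_s (b_s^w)^2=\|b^w\|_2^2$ and every perturbation of the dynamics by $w$ factors through it: the $s$-th row of $M^w$ has $\ell_1$-norm at most $b_s^w$, and $|r^w(s)|\le b_s^w$. Differentiating via the resolvent identity $\tfrac{d}{d\alpha}G_{\pi_\alpha}=\gamma G_{\pi_\alpha}M^u G_{\pi_\alpha}$ together with $\tfrac{d}{d\alpha}r^{\pi_\alpha}=r^u$, I would obtain the four-term expression (writing $G:=G_{\pi_\alpha}$, $V:=G r^{\pi_\alpha}$, $q^\top:=p^\top G$):
\begin{align*}
v^\top \nabla^2 V_p^+(\pi_\alpha)\, u &= \gamma^2 q^\top M^u G M^v V + \gamma^2 q^\top M^v G M^u V \\
&\quad + \gamma\, q^\top M^v G r^u + \gamma\, q^\top M^u G r^v .
\end{align*}

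Each term is then controlled by elementary estimates: $q\ge 0$ with $\|q\|_1=\tfrac{1}{1-\gamma}$, $\|q^\top M^w\|_1\le q^\top b^w\le \tfrac{1}{1-\gamma}\|b^w\|_\infty$, $\|Gx\|_\infty\le \tfrac{1}{1-\gamma}\|x\|_\infty$, $\|M^w x\|_\infty\le \|b^w\|_\infty\|x\|_\infty$, and $\|V\|_\infty\le \tfrac{1}{1-\gamma}$ (rewards in $[0,1]$). This gives $\tfrac{\gamma^2}{(1-\gamma)^3}\|b^u\|_\infty\|b^v\|_\infty$ for each of the first two terms and $\tfrac{\gamma}{(1-\gamma)^2}\|b^u\|_\infty\|b^v\|_\infty$ for each of the last two; bounding $\|b^w\|_\infty\le \|b^w\|_2=\|w\|_{2,1}$ and summing the coefficients yields exactly
$$
\frac{2\gamma^2}{(1-\gamma)^3} + \frac{2\gamma}{(1-\gamma)^2} = \frac{2\gamma}{(1-\gamma)^3}.
$$
Integrating in $\alpha$ and taking the supremum over $\|v\|_{2,1}\le 1$ finishes the argument.

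The one genuinely important point — as opposed to the routine resolvent calculus and norm bookkeeping — is the observation that the entire effect of a perturbation $w$ enters only through $b_s^w=\sum_a|w_{sa}|$, the $\ell_1$-mass over actions within each state, which is matched exactly by the inner $\ell_1$ part of $\|\cdot\|_{2,1}$, while the outer $\ell_2$ over states pairs with the $\ell_2$-over-states of $\|\cdot\|_{2,\infty}$. This is precisely where $|\mathcal{A}|$ would enter a Frobenius-norm analysis: there one must pass from $\sum_a|w_{sa}|$ to $\|w_{s\cdot}\|_2$ via Cauchy–Schwarz at a cost of $\sqrt{|\mathcal{A}|}$ per state, which is the source of the $|\mathcal{A}|$ factor in $L_F$. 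Choosing the $(2,1)/(2,\infty)$ pair keeps the action-wise dependence in $\ell_1$ and removes that loss entirely, so the main obstacle is really just setting up the correct norm pairing rather than any hard estimate.
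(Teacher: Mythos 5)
Your proof is correct, and at bottom it is the same argument the paper relies on: the paper's proof is a one-line deferral to the proof of Lemma 54 in Agarwal et al.\ (2021), observing that the only place $|\mathcal{A}|$ enters there is the Cauchy--Schwarz step converting the per-state action mass $\sum_a |u_{sa}|$ into $\|u_{s\cdot}\|_2$, a step that becomes free once perturbations are measured in the $(2,1)$-norm. You have reconstructed that computation from scratch (resolvent identity, four-term second derivative, all estimates routed through $b_s^w=\sum_a|w_{sa}|$), and your constants check out: $\tfrac{2\gamma^2}{(1-\gamma)^3}+\tfrac{2\gamma}{(1-\gamma)^2}=\tfrac{2\gamma}{(1-\gamma)^3}$ exactly. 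One respect in which your writeup is actually more careful than a literal "follows from Lemma 54": Agarwal et al.\ bound the quadratic form $u^\top\nabla^2 V(\pi_\alpha)u$ along a single direction, which for a non-Euclidean norm pair yields the operator-norm (hence dual-norm gradient-Lipschitz) bound only after a polarization argument that can cost a factor of $2$; by bounding the bilinear form $v^\top\nabla^2 V(\pi_\alpha)u$ with two independent directions and dualizing over $v$, you obtain the stated $(2,1)\to(2,\infty)$ Lipschitz inequality directly with no such loss.
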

  
  Consider \textbf{Stochastic Mirror Policy Gradient} (\algname{SMPG}), that is \algname{SMD} with the matrix form of Shannon entropy $\omega(\pi) := \sum_{s\in \mathcal S} \sum_{a\in \mathcal A} \pi_{s a} \log \pi_{s a}$.\footnote{It is $1$-strongly convex w.r.t. $\norm{\cdot}_{2,1}$ norm.} The stochastic gradients in \algname{SMD} are replaced by $\widehat \nabla V_{\mu}(\pi_t) := (\widehat \nabla_1 V_{\mu}(\pi_t) , \ldots, \widehat \nabla_{|S|} V_{\mu}(\pi_t) )$. Define $E_{t} := (E_t^1, \ldots, E_t^{|S|})$, then \algname{SMPG} can be written in a closed from. For all $s\in \mathcal S$
   $$\pi_{t+1} = \pi_t \odot E_{t} ,\quad 
  E_t^s := \frac{\exp\rb{ - \eta_t \widehat \nabla_s V_{\mu}(\pi_t) }}{\sum_{a \in \mathcal{A}} \exp\rb{ - \eta_t \widehat \nabla_s V_{\mu}(\pi_t) } } ,
  $$ 
  where $\odot$ denotes an element-wise multiplication of matrices and $\exp(\cdot)$ is an element-wise exponential. The sample complexity can be derived from Theorem~\ref{thm:SGD_KL} using Proposition~\ref{prop:RL_smooth_PL} under the bounded variance assumption (in dual norm $\norm{\cdot}_{2, \infty}$). 
  
  \begin{corollary}\label{cor:SMPG}
  	For any $\varepsilon>0$,  \algname{SMPG} guarantees that $
  	\min_{0\leq t\leq T-1} \Exp{ \cD_{\rho}( \pi_t) } \leq \varepsilon^2$ with $\rho := 6\gamma (1-\gamma)^{-3}$ after
  	$$T = \cO\rb{\frac{1 }{(1-\gamma)^3 \varepsilon^2} + \frac{\sigma_{2, \infty}^2 }{(1-\gamma)^3 \varepsilon^4} }. $$
  \end{corollary}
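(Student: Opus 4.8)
The plan is to recognize \algname{SMPG} as a concrete instance of \algname{SMD} in \eqref{eq:SMD} and then to invoke Theorem~\ref{thm:SMD} with the appropriate constants. Concretely, I would take $F = V_{\mu}$, $r \equiv 0$, feasible set $\cX = \Delta(\mathcal A)^{|\mathcal S|}$, distance generating function $\omega(\pi) = \sum_{s\in\mathcal S}\sum_{a\in\mathcal A}\pi_{sa}\log\pi_{sa}$, and underlying norm $\norm{\cdot}_{2,1}$ (with dual $\norm{\cdot}_{2,\infty}$). The first step is to confirm that the update \eqref{eq:SMD} with this entropic $\omega$ and $r\equiv 0$ reduces to the stated softmax/exponentiated-gradient step: since both $\langle\widehat\nabla V_{\mu}(\pi_t),\cdot\rangle$ and $D_{\omega}(\cdot,\pi_t)$ separate across states $s\in\mathcal S$, the $\argmin$ decouples into $|\mathcal S|$ simplex-constrained subproblems, each solved in closed form by the multiplicative-weights rule $E_t^s$. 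This is a routine mirror-descent computation that I would only sketch.

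Next I would verify the hypotheses of Theorem~\ref{thm:SMD}. For relative smoothness (Assumption~\ref{ass:rel_smooth}): Proposition~\ref{prop:RL_smooth_PL} establishes that $V_p$ is smooth w.r.t.\,$\norm{\cdot}_{2,1}$ with constant $\tfrac{2\gamma}{(1-\gamma)^3}$, and the identical argument applies verbatim to $V_{\mu}$ (the bound depends only on the DMDP structure, not on the initial distribution), giving the same constant. By the remark following Assumption~\ref{ass:rel_smooth}, smoothness w.r.t.\,a norm implies relative smoothness once $\omega$ is $1$-strongly convex w.r.t.\,that same norm; since $\omega$ is $1$-strongly convex w.r.t.\,$\norm{\cdot}_{2,1}$, this yields $(\ell,\omega)$-smoothness with $\ell = \tfrac{2\gamma}{(1-\gamma)^3}$. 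For bounded variance (Assumption~\ref{ass:BV}), I would invoke the stated assumption that $\widehat\nabla V_{\mu}$ has variance bounded by $\sigma_{2,\infty}^2$ in the dual norm $\norm{\cdot}_{2,\infty}$, which is exactly the dual of $\norm{\cdot}_{2,1}$. Finally I would record the bookkeeping identity $3\ell = \tfrac{6\gamma}{(1-\gamma)^3} = \rho$, so that the BFBE measure $\cD_{3\ell}$ delivered by Theorem~\ref{thm:SMD} is precisely the $\cD_{\rho}$ in the claim.

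With the hypotheses in place, I would apply the constant step-size branch of Theorem~\ref{thm:SMD} with $\stepsize_t = \min\cb{\tfrac{1}{2\ell},\sqrt{\lambda_0/(\sigma^2\ell T)}}$, yielding $\Exp{\cD_{\rho}(\bar\pi_T)} = \cO\rb{\tfrac{\ell\lambda_0}{T} + \sqrt{\tfrac{\sigma^2\ell\lambda_0}{T}}}$ for the randomly drawn iterate $\bar\pi_T$. Because the step-sizes are constant, $\bar\pi_T$ is uniform over $\pi_0,\dots,\pi_{T-1}$, so $\min_{0\le t\le T-1}\Exp{\cD_{\rho}(\pi_t)}\le\Exp{\cD_{\rho}(\bar\pi_T)}$ and it suffices to drive the right-hand side below $\varepsilon^2$. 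To control $\lambda_0 = \Phi_{1/\rho}(\pi_0)-\Phi^* + \Phi(\pi_0)-\Phi^*$, I would use $r\equiv 0$ so $\Phi = V_{\mu}$, the boundedness of the discounted value function to bound the initial suboptimality gap, and the elementary inequality $\Phi_{1/\rho}(\pi_0)\le\Phi(\pi_0)$ (take $y=\pi_0$ in the Bregman--Moreau minimization), so that $\lambda_0 = \cO(1)$. Substituting $\ell = \tfrac{2\gamma}{(1-\gamma)^3}$ and $\sigma^2 = \sigma_{2,\infty}^2$, forcing each of the two terms below $\varepsilon^2/2$, and solving for $T$ then produces $T = \cO\rb{\tfrac{1}{(1-\gamma)^3\varepsilon^2} + \tfrac{\sigma_{2,\infty}^2}{(1-\gamma)^3\varepsilon^4}}$.

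The reduction and step-size tuning are essentially bookkeeping, so I expect the genuine content to sit in the two geometric facts feeding the reduction: verifying that the entropic $\omega$ is $1$-strongly convex w.r.t.\,$\norm{\cdot}_{2,1}$ (the matrix-structured analogue of the simplex entropy bound) and that Proposition~\ref{prop:RL_smooth_PL} transfers to $V_{\mu}$ with an unchanged constant, both of which are what make the $|\mathcal A|$-free rate possible. A secondary subtlety is the handling of $\lambda_0$: the value-function range carries a $\tfrac{1}{1-\gamma}$ factor, so matching the exact power of $(1-\gamma)$ in the stated rate requires absorbing the initial suboptimality gap into the $\cO(1)$ constant, as is standard in the policy-optimization literature.
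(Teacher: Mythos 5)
Your proposal is correct and follows essentially the same route the paper intends: instantiate Theorem~\ref{thm:SMD} with $F=V_\mu$, the matrix Shannon entropy as DGF (so that $\ell=\tfrac{2\gamma}{(1-\gamma)^3}$ via Proposition~\ref{prop:RL_smooth_PL} and the $1$-strong convexity of $\omega$ w.r.t.\,$\norm{\cdot}_{2,1}$), bounded variance in $\norm{\cdot}_{2,\infty}$, and $\rho=3\ell$, then tune the constant step-size. Your handling of $\lambda_0$ (absorbing the $\tfrac{1}{1-\gamma}$ value-function range into the constant) matches the paper's own implicit treatment, and you correctly route the argument through Theorem~\ref{thm:SMD} rather than the Prox-P{\L} theorem that the surrounding text nominally cites.
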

  Notice that compared to the bound for \algname{P-SPG} the above sample complexity is better at least by a factor of $|\mathcal A|$.  Moreover, $\sigma_{2, \infty}$ can be much smaller than $\sigma_{F}$. 
  
  It should be noted, however, that $\cD_{\rho}(\pi_t) $ in Corollaries~\ref{cor:PSPG} and \ref{cor:SMPG} are induced by different $\omega(\cdot)$ and thus induce different FOSP measures. Also it remains unclear how to establish a global convergence of \algname{SMPG} in the function value. The technical difficulty arises because the condition \eqref{eq:VaGD} might not imply Assumption~\ref{ass:KL} under non-smooth DGF. 
  
  \begin{remark}
	While this example serves well to illustrate the application of our general theory and potential advantages of \algname{SMPG} compared to \algname{P-SPG}, it does not mean that \algname{SMPG} is the most suitable algorithm for solving~\eqref{eq:RL_problem}. In fact, there are other specialized algorithms in RL literature, which have better theoretical sample complexities than shown above. For example, Natural Policy Gradient (\algname{NPG}) (also known as exponentiated $Q$-descent or Policy Mirror Descent) \citep{kakade2001natural,agarwal-et-al21,lan2023policy,xiao22,zhan2023policy,khodadadian2021linear} achieves faster convergence in terms of $\varepsilon$. However, a notable difference of \algname{SMPG} compared to \algname{NPG} is that the latter uses a $Q$-function instead of the policy gradient $\widehat \nabla V_\mu(\pi)$, see the derivation of \algname{NPG} in Section 4 in \citep{xiao22}. Another popular approach to problem \eqref{eq:RL_problem} is the use of soft-max policy parametrization instead of directly solving the problem over $\cX$. In this direction, different variants of PG method were developed and analyzed, see, e.g., \citep{zhang2020variational,zhang2021convergence,barakat2023reinforcement}. 
  \end{remark}
  \begin{remark}
  	The special cases and variants of Prox-P{\L} condition were previously used to derive global convergence of PG methods \citep{daskalakis2020independent,kumar2023towards} including continuous state action-spaces in RL \citep{ding2022global,SPGM_fatkhullin23a} and classical control tasks \citep{fazel-et-al18,fatkhullin2021optimizing,LQR_output_global,LearningZS_LQGames_Wu_2023}. An alternative approach to global convergence of \algname{P-SPG}, based on hidden convexity of \eqref{eq:RL_problem}, was recently studied in \citep{fatkhullin2023_HC}.
  \end{remark}
  
 \subsection{Training Autoencoder Model using SMD}\label{sec:DNN}

In this section, we showcase how we can harness general Bregman divergence in \algname{SMD} to address modern machine learning problems involving linear neural networks, where the objectives go beyond the smooth regime considered in the existing theoretical analysis \citep{kawaguchi2016deep}.

More specifically, assume that the $F(\cdot)$ is twice differentiable and its Hessian is bounded by the polynomial of $\norm{x}_2$, i.e., there exist $r, L, L_r\geq 0$ such that 
\begin{equation}\label{eq:L_Lr_condition}
\norm{\nabla^2 F(x)}_{\text{op}} \leq L + L_r \norm{x}_2^r  \qquad \text{for all } x \in \R^d . \footnote{Compare to $(L_0, L_1)$-smoothness condition studied in \citep{zhang2019gradient_clipping_L0L1}.}
\end{equation}
The following result (initially appeared in \citep{lu2018relatively,lu2019relative}) shows that for any $r\geq 0$, the above condition implies relative smoothness (Assumption~\ref{ass:rel_smooth}).
\begin{proposition}[Proposition 2.1. in \citep{lu2018relatively}]\label{prop:polygrow_hess} Suppose $F(\cdot)$ is twice differentiable and satisfies \eqref{eq:L_Lr_condition}. Then $F(\cdot)$ is $\ell$-smooth relative to $\omega(x)=\frac{1}{r+2}\|x\|_2^{r+2}+\frac{1}{2}\|x\|_2^2$ with $\ell := \max\cb{L , L_r}$. 
\end{proposition}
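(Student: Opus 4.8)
The plan is to reduce the two-sided relative smoothness inequality of Assumption~\ref{ass:rel_smooth} to a single pointwise spectral comparison of Hessians, and then to verify that comparison directly from \eqref{eq:L_Lr_condition}. The key fact I would use is the standard characterization that, for twice differentiable $F(\cdot)$ and $\omega(\cdot)$, the upper bound $F(x) - F(y) - \langle \nabla F(y), x-y\rangle \leq \ell D_{\omega}(x,y)$ for all $x,y$ is equivalent to convexity of $\ell\omega - F$, i.e.\ to $\ell\nabla^2\omega(z) \succeq \nabla^2 F(z)$ for every $z$, while the lower bound $-\ell D_{\omega}(x,y) \leq F(x) - F(y) - \langle \nabla F(y), x-y\rangle$ is equivalent to convexity of $\ell\omega + F$, i.e.\ to $\ell\nabla^2\omega(z) \succeq -\nabla^2 F(z)$. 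Hence it suffices to establish the spectral inequalities $\ell\nabla^2\omega(z) \succeq \nabla^2 F(z)$ and $\ell\nabla^2\omega(z) \succeq -\nabla^2 F(z)$ for all $z\in\R^d$.

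First I would compute the Hessian of $\omega(x) = \frac{1}{r+2}\norm{x}_2^{r+2} + \frac{1}{2}\norm{x}_2^2$. Differentiating the power term twice gives
\begin{equation*}
\nabla^2\omega(x) = I + \norm{x}_2^{r}\, I + r\,\norm{x}_2^{r-2}\, xx^\top .
\end{equation*}
Since $r\geq 0$ and $xx^\top\succeq 0$, the last term is positive semidefinite, so $\nabla^2\omega(x) \succeq (1 + \norm{x}_2^{r})\, I$. Next I would invoke \eqref{eq:L_Lr_condition}, which states $\norm{\nabla^2 F(z)}_{\text{op}} \leq L + L_r\norm{z}_2^{r}$, equivalently $-(L + L_r\norm{z}_2^{r})I \preceq \nabla^2 F(z) \preceq (L + L_r\norm{z}_2^{r})I$. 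Combining this with the lower bound on $\nabla^2\omega$ and the choice $\ell := \max\{L, L_r\}$ yields
\begin{equation*}
\ell\nabla^2\omega(z) \succeq \ell\,(1 + \norm{z}_2^{r})\,I = (\ell + \ell\,\norm{z}_2^{r})\,I \succeq (L + L_r\norm{z}_2^{r})\,I \succeq \nabla^2 F(z),
\end{equation*}
where the middle inequality uses $\ell\geq L$ and $\ell\geq L_r$; the same chain bounds $-\nabla^2 F(z)$. This is exactly the required comparison, so both $\ell\omega - F$ and $\ell\omega + F$ are convex and the two-sided inequality follows.

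The one delicate point, which I expect to be the main obstacle, is regularity rather than the algebra: one must ensure the Hessian identity and the convexity characterization are valid globally, including at $x=0$, where the factor $\norm{x}_2^{r-2}$ is singular when $r<2$. This is resolved by noting that $r+2\geq 2$ forces $x\mapsto\norm{x}_2^{r+2}$ to be $C^2$ on all of $\R^d$, with $\nabla^2\omega(0)=I$ for $r>0$ and $\nabla^2\omega(0)=2I$ for $r=0$, so the lower bound $\nabla^2\omega(x)\succeq(1+\norm{x}_2^{r})I$ persists by continuity at the origin. Alternatively one can verify convexity of $\ell\omega\pm F$ on $\R^d\setminus\{0\}$ and extend to the origin using continuity of the gradients. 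All remaining steps are elementary, so the argument is complete.
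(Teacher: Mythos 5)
Your proof is correct. The paper does not supply its own argument for this proposition (it is cited verbatim from \citet{lu2018relatively}), and your Hessian-comparison route --- reduce two-sided relative smoothness to convexity of $\ell\omega \pm F$, compute $\nabla^2\omega(x) = (1+\norm{x}_2^r)I + r\norm{x}_2^{r-2}xx^\top \succeq (1+\norm{x}_2^r)I$, and dominate $\pm\nabla^2 F(z) \preceq (L + L_r\norm{z}_2^r)I \preceq \ell(1+\norm{z}_2^r)I$ --- is exactly the standard proof from that reference. The one substantive addition you make is verifying \emph{both} spectral inequalities, which is needed here because Assumption~\ref{ass:rel_smooth} in this paper is two-sided (unlike the one-sided definition in \citet{lu2018relatively}), and your handling of the regularity of $\norm{x}_2^{r+2}$ at the origin for $0 \le r < 2$ is careful and correct.
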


To design a provably convergent scheme for such problems, it remains to solve \algname{SMD} subproblem with DGF specified in the above proposition. Luckily, this is possible 
\begin{eqnarray}
	c_t  &= &(1 + \norm{x_t}_2^r) \, x_t - \stepsize_{t} \nabla f(x_t, \xi_t ) , \label{eq:SMDr_line1}\\
	x _{t+1} &=& (1 + \theta_{*}^{r})^{-1} \, c_t  ,  \label{eq:SMDr_line2}
\end{eqnarray}
where $\theta_* \geq  0 $ is the unique solution to $\theta^{r+1} + \theta = \norm{c_t}_2  $.

Convergence to a FOSP of the above method follows immediately from our Theorem~\ref{thm:SMD}, see Appendix~\ref{sec:DNN_appendix} for more details. For $r = 1, 2$, the solution $\theta_*$ can be found in a closed form, while for larger values of $r$ it can be solved using a bisection method. 
\begin{figure}
	\centering
	\includegraphics[width=0.45\textwidth]{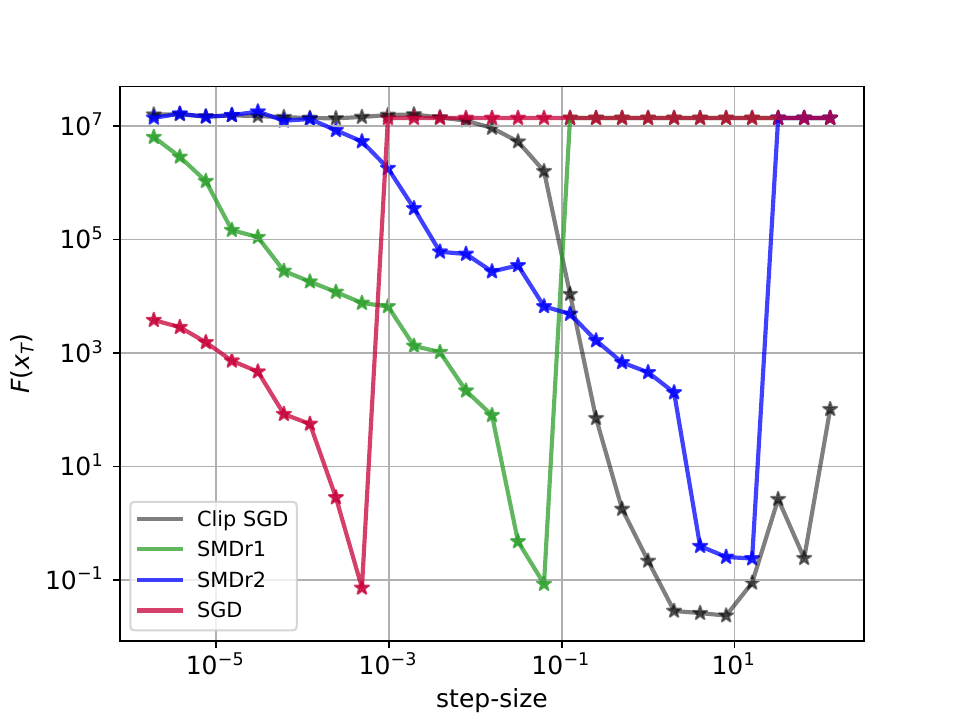}
	\caption{\footnotesize Sensitivity to step-size choice for \algnamesmall{SMDr1}, \algnamesmall{SMDr2}, \algnamesmall{SGD} and \algnamesmall{Clip SGD} (with clipping radius $1$). The plot shows the function value $F(x_T)$ after $T=10^4$ iterations for each step-size. The star markers correspond to the actual runs, and the lines linearly interpolate between them. }
	\label{fig:SMD_sz_robustness}
\end{figure}

To illustrate the empirical performance of the above scheme, we consider two layer autoencoder problem
\begin{equation}\label{eq:autoencoder_problem}
	\small \min _{\makecell{ {W_1} \in \mathbb{R}^{d_{e} \times d_{f}}  \\ {W_2} \in \R^{d_{f} \times d_{e}}}  } \left[F(W):=\frac{1}{n} \sum_{i=1}^{n}\left\|{W_2} {W_1} a_{i}-a_{i}\right\|_2^{2}\right],
\end{equation}
where $a_i \in \R^{d_f}$ are flattened representations of images, $W = \sb{W_1, W_2}$ are learned parameters of the model.\footnote{In previous notations, we have $x = \opn{vec}(W) \in \R^{2 \, d_e \, d_f}$.} The above problem is nonconvex and globally nonsmooth in $W$ since its Hessian norm grows as a polynomial in the norm of $W$. Therefore, \algname{SGD} can easily diverge if poorly initialized. However, condition \eqref{eq:L_Lr_condition} can be verified for some $r_0 \geq 2$ and the scheme \eqref{eq:SMDr_line1}, \eqref{eq:SMDr_line2} provably converges for any $r\geq r_0$.\footnote{Computation of the Hessian and use of Cauchy-Schwarz inequality implies \eqref{eq:L_Lr_condition} for any number of layers in \eqref{eq:autoencoder_problem}. }

We focus on $r = 1, 2$ and call the corresponding methods \algname{SMDr1} and \algname{SMDr2}. We compare these algorithms to the standard \algname{SGD}, which corresponds to $r = 0$. 
For comparison, we also include a popular heuristic algorithm, \algname{Clip SGD}, which is known to mitigate the problem of exploding gradients. 
We use constant step-sizes for each method. Figure~\ref{fig:SMD_sz_robustness} reports the final training loss for each step-size in the range $\cb{2^{-19}, 2^{-18}, \ldots, 2^7 }$. 
  
  We observe from Figure~\ref{fig:SMD_sz_robustness} that \algname{SMDr1} and \algname{SMDr2} allow for much larger step-sizes than \algname{SGD}. Moreover, increasing $r$ also increases the robustness to the step-size choice, i.e., the lower part of the curve becomes wider. At the same time, in the high accuracy regime, \algname{Clip SGD} still outperforms other algorithms on this task. Note, however, that \algname{Clip SGD} might not converge in general in the stochastic setting for a constant clipping parameter, see, e.\,g., \citep{koloskova2023revisiting}.

\section*{Acknowledgments} This work is supported by ETH AI Center Doctoral Fellowship and Swiss National Science Foundation (SNSF) Project Funding No. 200021-207343.



\bibliographystyle{abbrvnat}

\bibliography{biblio}

\appendix
\onecolumn

\tableofcontents

\newpage

\section{Proofs of Lemma~\ref{le:BPMandBGM} and \ref{le:FB_env}: Connections Between FOSP}

\textbf{BPM and BGM are equivalent up to a constant factor.} 

The following lemma establishes that if $F(\cdot)$ is $(\ell, \omega)$-smooth, then the distance between $\hat x$ and $x^+$ is small. 
\begin{lemma}\label{le:BPMandBGM_close}
	For any $\rho > \ell$, we have for all $x \in \cX \cap \cS$
	$$
	\rho^2 D_{\omega}^{\text{sym}}(\hat x, x^+) \leq \frac{\ell}{\rho - \ell} \rb{ \Delta_{\rho}(x) + \Delta_{\rho}^+(x) } .
	$$
\end{lemma}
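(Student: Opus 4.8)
The plan is to realize $\hat x$ and $x^+$ as minimizers of two closely related subproblems and to control their distance by comparing optimality inequalities. Concretely, I write $g_1(y) := F(y) + r(y) + \rho D_{\omega}(y,x)$ and $g_2(y) := \langle \nabla F(x), y\rangle + r(y) + \rho D_{\omega}(y,x)$, both restricted to $y\in\cX$ (folding $\delta_{\cX}$ into $r$), so that $\hat x = \argmin g_1$ and $x^+ = \argmin g_2$. The only difference between the two objectives is that $g_1$ carries the true $F$ whereas $g_2$ carries its linearization at $x$, and Assumption~\ref{ass:rel_smooth} is precisely what quantifies this gap.

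The first step is to establish a strong-convexity-type descent inequality for each subproblem. Since the non-Bregman part of $g_2$ is convex, the standard three-point property of Bregman proximal steps gives $g_2(\hat x) \geq g_2(x^+) + \rho D_{\omega}(\hat x, x^+)$. For $g_1$ the non-Bregman part $F + r$ is nonconvex, so the same property fails; instead I would combine the optimality condition $0 \in \nabla F(\hat x) + \partial(r+\delta_{\cX})(\hat x) + \rho(\nabla\omega(\hat x)-\nabla\omega(x))$ with convexity of $r+\delta_{\cX}$, the lower bound $F(y) \geq F(\hat x) + \langle\nabla F(\hat x), y-\hat x\rangle - \ell D_{\omega}(y,\hat x)$ from Assumption~\ref{ass:rel_smooth}, and the three-point identity $D_{\omega}(y,x) - D_{\omega}(y,\hat x) - D_{\omega}(\hat x,x) = \langle\nabla\omega(\hat x)-\nabla\omega(x), y-\hat x\rangle$. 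After substitution the first-order terms cancel and one is left with $g_1(y) \geq g_1(\hat x) + (\rho-\ell) D_{\omega}(y,\hat x)$; thus relative smoothness degrades the effective modulus from $\rho$ to $\rho-\ell$, which is where the hypothesis $\rho > \ell$ enters.

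The second step is to add the two inequalities evaluated at the cross points ($y=\hat x$ in the $x^+$ inequality and $y=x^+$ in the $g_1$ inequality). All the $r$ and $\rho D_{\omega}(\cdot,x)$ terms cancel, leaving $\rho D_{\omega}(\hat x, x^+) + (\rho-\ell) D_{\omega}(x^+, \hat x) \leq \langle\nabla F(x), \hat x - x^+\rangle + F(x^+) - F(\hat x)$. To bound the right-hand side I would apply the relative-smoothness upper bound to $F(x^+)$ and the lower bound to $F(\hat x)$, both anchored at $x$; the gradient inner products then cancel and the right side collapses to $\ell\, D_{\omega}(x^+,x) + \ell\, D_{\omega}(\hat x,x)$. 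Finally, on the left I use $\rho D_{\omega}(\hat x,x^+) + (\rho-\ell) D_{\omega}(x^+,\hat x) \geq (\rho-\ell) D_{\omega}^{\text{sym}}(\hat x, x^+)$, and on the right I bound $D_{\omega}(x^+,x) \leq D_{\omega}^{\text{sym}}(x^+,x) = \Delta_{\rho}^+(x)/\rho^2$ and $D_{\omega}(\hat x,x) \leq D_{\omega}^{\text{sym}}(\hat x,x) = \Delta_{\rho}(x)/\rho^2$ by nonnegativity of $D_{\omega}$; dividing by $\rho-\ell$ and multiplying by $\rho^2$ yields the claim.

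The main obstacle is the first step, namely turning the nonconvex $g_1$ into a genuine $(\rho-\ell)$-strongly-convex-type lower bound: getting the constants right there, and correctly incorporating the normal cone of $\cX$ into the optimality condition for $\hat x$, is the delicate part. The remaining manipulations are routine bookkeeping with the three-point identity and with both sides of relative smoothness.
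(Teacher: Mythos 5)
Your proof is correct and rests on exactly the same ingredients as the paper's: the first-order optimality conditions of the two subproblems, convexity of $r+\delta_{\cX}$, both the upper and lower bounds of Assumption~\ref{ass:rel_smooth}, and the three-point identity. The only difference is organizational --- you package these as growth inequalities for $g_1$ and $g_2$ and add them at the cross points, whereas the paper subtracts the two stationarity conditions and pairs the result with $\hat x - x^+$ directly; both routes arrive at $(\rho-\ell)\,D_{\omega}^{\text{sym}}(\hat x, x^+) \leq \ell\,(D_{\omega}(x^+,x)+D_{\omega}(\hat x,x))$ (up to which argument order of the unsymmetrized divergences appears) and hence the identical final bound.
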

\begin{proof}
	Recall that $\hat x := \argmin_{y \in \cX} F(y) + r(y) + \rho D_{\omega}(y, x) , $ and $x^+ := \argmin_{y \in \cX} \, \langle \nabla F(x), y \rangle + r(y) + \rho D_{\omega}(y, x) .$
	By the optimality conditions for $x^+$ and $\hat x$, there exist $s^+ \in \partial (r + \delta_{\cX}) (x^+)$ and $\hat s \in \partial (r + \delta_{\cX})(\hat x)$, such that 
	$$
	0 =  \nabla F(\hat x) +  \hat s + \rho \, \nabla \omega(\hat x) - \rho \, \nabla \omega(x) ,
	$$
	$$
	0 =  \nabla F(x) +  s^+ + \rho \, \nabla \omega(x^+) - \rho \, \nabla \omega(x) , 
	$$ 
	Subtracting these equalities, we obtain
	$\rho \, \nabla \omega(\hat x) - \rho \, \nabla \omega(x^+) = s^+ - \hat s + \nabla F(x) - \nabla F(\hat x) $.
	
	By Lemma~\ref{le:bregman_properties}-\ref{le:bregman_properties_TPI} (three point identity) with $x = z$ and using the above identity, we have 
	\begin{eqnarray}
		\rho ( D_{\omega}(\hat x , x^+) + D_{\omega}(x^+ , \hat x) ) &=& \langle \rho \, \nabla \omega(\hat x) - \rho \, \nabla \omega(x^+) , \hat x - x^+  \rangle \notag \\
		&=& \langle s^+ - \hat s + \nabla F(x) - \nabla F(\hat x) , \hat x - x^+  \rangle \notag \\
		&\overset{(i)}{\leq}& \langle \nabla F(x) - \nabla F(\hat x) , \hat x - x^+  \rangle \label{eq:optimality_in_le_bregman_stat_bregman_prox}
	\end{eqnarray}
	where in $(i)$ we use convexity of $(r + \delta_{\cX})(\cdot)$. By relative smoothness of $F(\cdot)$, we have for any $x, y, z \in \cX \cap \cS$
	$$
	F(x) - F(y) - \langle \nabla F(y), x - y \rangle \leq \ell D_{\omega}(x,y) ,
	$$
	$$
	F(z) - F(x) - \langle \nabla F(x), z - x \rangle \leq \ell D_{\omega}(z,x) ,
	$$
	$$
	- \ell D_{\omega}(z, y)  \leq F(z) - F(y) - \langle \nabla F(y), z - y \rangle  .
	$$
	Adding the above inequalities gives for any $x, y, z \in \cX \cap \cS$
	$$
	\langle \nabla F(x) - \nabla F(y), y - z \rangle \leq \ell D_{\omega}(x, y) + \ell D_{\omega}(z, x) + \ell D_{\omega}(z, y).
	$$
	Applying the above inequality with $x = x$, $y  = \hat x$, $z = x^+$, we further bound \eqref{eq:optimality_in_le_bregman_stat_bregman_prox} 
	\begin{eqnarray*}
	\rho ( D_{\omega}(\hat x , x^+) + D_{\omega}(x^+ , \hat x) ) &\leq& \ell D_{\omega}(x, \hat x) + \ell D_{\omega}(x^{+}, x) + \ell D_{\omega}(x^+, \hat x) \\
	&\leq& \ell D_{\omega}(x, \hat x) + \ell D_{\omega}(x^{+}, x) + \ell D_{\omega}(x^+, \hat x) + \ell D_{\omega}(\hat x, x^+) . 
	\end{eqnarray*}
	Therefore, for any $\rho > \ell$, we have 
	\begin{eqnarray*}
		\rho^2 \rb{ D_{\omega}(\hat x , x^+) + D_{\omega}(x^+ , \hat x) } \leq \frac{\ell \rho^2 }{\rho - \ell} (D_{\omega}(x, \hat x) +  D_{\omega}(x^+, x) )  \leq  \frac{\ell}{\rho - \ell} \rb{ \Delta_{\rho}(x) + \Delta_{\rho}^+(x)}  .
	\end{eqnarray*}
\end{proof}

\begin{proof}[Proof of Lemma~\ref{le:BPMandBGM}]
	For any $x, y, z \in \cX \cap \cS$ and $s > 0$, we have  
	$$
	D_{\omega}^{\text{sym}}(x, y) \leq \rb{ \sqrt{D_{\omega}^{\text{sym}}(x, z)} + \sqrt{D_{\omega}^{\text{sym}}(y, z)} }^2 \leq \rb{1 + s} D_{\omega}^{\text{sym}}(x, z) + \rb{1 + s^{-1}} D_{\omega}^{\text{sym}}(y, z) . 
	$$
	Applying Lemma~\ref{le:BPMandBGM_close} together with the above inequality, we have 
	\begin{eqnarray*}
		\Delta_{\rho}^+(x) = \rho^2 D_{\omega}^{\text{sym}}(x, x^+) &\leq& (1+s) \rho^2  D_{\omega}^{\text{sym}}( x, \hat x)  + (1+s^{-1}) \rho^2  D_{\omega}^{\text{sym}}(x^+, \hat x)  \\
		&\leq& (1+s) \rho^2  D_{\omega}^{\text{sym}} (x,\hat x)  + \frac{ (1+s^{-1}) \ell}{\rho - \ell}  \rb{\Delta_{\rho}(x) + \Delta_{\rho}^+(x) } \\
		&\leq& \rb{ 1+s + \frac{ (1+s^{-1}) \ell}{\rho - \ell} }  \Delta_{\rho}(x) + \frac{ (1+s^{-1}) \ell}{\rho - \ell}   \Delta_{\rho}^+(x) . 
	\end{eqnarray*}
	Rearranging, we obtain the upper bound on $\Delta_{\rho}^+(x)$. For the lower bound, we act similarly and derive 
\begin{eqnarray*}
	\Delta_{\rho}(x) = \rho^2 D_{\omega}^{\text{sym}}(x, \hat x) &\leq& (1+s) \rho^2  D_{\omega}^{\text{sym}}( x, x^+)  + (1+s^{-1}) \rho^2  D_{\omega}^{\text{sym}}(\hat x, x^+)  \\
	&\leq& (1+s) \rho^2  D_{\omega}^{\text{sym}} (x, x^+)  + \frac{ (1+s^{-1}) \ell}{\rho - \ell}  \rb{\Delta_{\rho}(x) + \Delta_{\rho}^+(x) } \\
	&\leq& \rb{ 1+s + \frac{ (1+s^{-1}) \ell}{\rho - \ell} }  \Delta_{\rho}^+(x) + \frac{ (1+s^{-1}) \ell}{\rho - \ell}   \Delta_{\rho}(x) . 
\end{eqnarray*}
	Combining the above two inequalities, we have 	
	$
	\frac{ \Delta_{\rho}(x)}{C(\ell, \rho, s)} \leq \Delta_{\rho}^+ (x) \leq C(\ell, \rho, s) \Delta_{\rho}(x) .
	 $

\end{proof}

\begin{remark}
	Notice that our intermediate Lemma~\ref{le:BPMandBGM_close} does not require $\omega(\cdot)$ to induce a metric and shows that $\hat x$ and $x^+$ are close if $\Delta_{\rho}(x)$ and $\Delta_{\rho}^+(x)$ are small. However, the proof of Lemma~\ref{le:BPMandBGM} crucially relies on triangle inequality. Therefore, it is unclear whether convergence of \algname{SMD} in $\Delta_{\rho}(x)$ (that was established in \citep{zhang2018convergence} under BG assumption) implies convergence in $\Delta_{\rho}^+(x) $. In our main Theorems~\ref{thm:SMD}, \ref{thm:high_prob_subgauss} and \ref{thm:SGD_KL}, we bypass this issue and directly establish convergence on $\cD_{\rho}(x)$, that is a stronger measure than $\Delta_{\rho}^+(x)$ (and stronger than $\Delta_{\rho}(x)$ if $\sqrt{	D_{\omega}^{\text{sym}}(x, y)}$ is a metric). Moreover, as we have seen in Section~\ref{sec:prelim}, $\cD_{\rho}(x)$ seems to be a more natural FOSP measure since it reduces to $\sqnorm{\nabla F(x)}$ in unconstrained case. 
\end{remark}

\textbf{BFBE is strictly larger than BGM.} 

Now we state the proof of Lemma~\ref{le:FB_env}, which consists of two parts. 

\begin{proof}[Proof of Lemma~\ref{le:FB_env}]

\textbf{1. BFBE is not smaller than BGM.} 	
	Recall that $x^+ := \argmin_{y \in \cX} \, \langle \nabla F(x), y \rangle + r(y) + \rho D_{\omega}(y, x) .$ By the optimality condition, there exists $u^+ \in \partial r(x^+)$ such that 
	$$
	0 = \nabla F(x) + \rho ( \nabla\omega (x^+) - \nabla \omega (x) ) + u^+ .
	$$
	Thus, by convexity of $r(\cdot)$
	\begin{eqnarray*}
		r(x) &\geq& r(x^+) + \langle u^+, x - x^+ \rangle  = r(x^+) + \rho\langle \, \nabla \omega (x) - \, \nabla \omega(x^+), x - x^+ \rangle   - \langle \nabla F(x) , x - x^+ \rangle \\
		& = & r(x^+) + \rho (D_{\omega}(x, x^+) + D_{\omega}(x^+, x) ) -   \langle \nabla F(x) , x - x^+ \rangle .
	\end{eqnarray*}
	Using the above inequality and the definition of $\cD_{\rho}(x) $, we derive for any $\rho, \rho_1 > 0$ 
	\begin{eqnarray*}
		\frac{1}{2 \rho_1}\cD_{\rho_1}(x) &:=&  - \min_{y\in \cX} \cb{  \langle\nabla F(x), y - x\rangle + \rho_1 D_{\omega}(y, x) + r(y) - r(x) } \\
		&=&   \langle\nabla F(x), x - x^+\rangle - \rho_1 D_{\omega}(x^+, x) + r(x) - r(x^+)  \\
		&\geq&    \rho ( D_{\omega}(x, x^+) + D_{\omega}(x^+, x) ) - \rho_1 D_{\omega}(x^+, x) \\
		&\geq&   (\rho - \rho_1) (D_{\omega}(x^+, x) + D_{\omega}(x^+, x) )  .
	\end{eqnarray*}
	Recalling the definition of $\Delta_{\rho}^+(x) $ and setting $\rho_1 = \rho/2$, it remains to conclude that $\cD_{\rho/2}(x) \geq \frac{1}{2} \Delta_{\rho}^+(x)$.

\textbf{2. BFBE can be much larger than BGM.}

The following example shows how large can be the ratio of BFBE and BGM. Consider minimizing $\Phi(x) = F(x) + r(x) $ over $\cX = [0,1] \subset \R^1$ with $F(x) = \frac{1}{2} x^2$, $r(x) = |x|$.  We can compute the proximal operator of the absolute value as $ \opn{prox}_{r / \rho} (x) = \text{sign}(x) \max \cb{0, |x| - \frac{1}{\rho}} $, where $\text{sign}(x) = 1$, if $x\geq 0$ and $\text{sign}(x) = -1$ otherwise. For any $ x \in [0, 1] $ and $\rho \geq 1$, we can compute $x^+ = \opn{prox}_{r/\rho}(x - \rho^{-1} x) = 0$. Therefore, we have 
$$
\Delta_{\rho}^+(x) = x^2 ,
$$
$$
\cD_{\rho}(x) = - 2\rho (\langle \nabla F(x), x^+ -  x \rangle + \frac{\rho}{2}(x^+ - x)^2 + |x^+| - |x| ) =  2 \rho |x| + 2\rho \rb{1 - \frac{\rho}{2}} x^2 .
$$
In particular, taking arbitrary $\rho  = \rho_1 \geq 1$ in the first equality and $\rho \leq 2$ in the second equality, we have for any $x\in (0,1]$
$$
\frac{\cD_{\rho}(x)}{ \Delta_{\rho_1}^+(x)} \geq \frac{2 \rho |x| }{x^2} \geq \frac{2}{|x|} . 
$$
We conclude that $\cD_{\rho}(x)$ can be arbitrary larger than  $\Delta_{\rho_1}^+(x)$ even when $x$ is close to the optimum $x^* = 0$. This implies that the opposite inequality in Lemma~\ref{le:FB_env} does not hold in general even when $\rho$ and $\rho_1$ are allowed to be different. Therefore, BFBE is strictly stronger convergence measure than BGM. 

\end{proof}

\newpage

\section{Proof of Theorem~\ref{thm:SMD}: Convergence to FOSP in Expectation}

\begin{proof}
\textbf{Step I. Deterministic descent w.r.t.\,Forward-Backward Envelope.}

Define $\hat x := \operatorname{prox}_{\Phi /\rho}(x)$. Notice that for any $x\in \cX \cap \cS$, we have for any $x^+ \in \cX \cap \cS$
$$
\Phi_{1/\rho}( x)  = \Phi( \hat x) + \rho D_{\omega}(\hat x, x)  \leq \Phi(x^+) + \rho D_{\omega}(x^+, x) .
$$
We set $x^+ := \argmin_{y \in \cX} \, \langle \nabla F(x), y \rangle + r(y) + \rho_1 D_{\omega}(y, x) $ with $\rho_1 \geq \rho + \ell$. Then by relative smoothness (upper bound) of $F(\cdot)$
\begin{eqnarray}\label{eq:deterministic_descent}
    \Phi_{1/\rho}( x) &\leq& \Phi\left({x}^+\right) + \rho D_{\omega}(x^+, x) \notag \\
    &=& F\left({x}^+\right)+r\left({x}^+\right) + \rho D_{\omega}(x^+, x) \notag \\  
& \leq& F\left(x\right)+\left\langle\nabla F\left(x\right), {x}^+ - x\right\rangle + \ell D_{\omega}(x^+, x) + r\left({x}^+\right) + \rho D_{\omega}(x^+, x)  \notag \\
& = &\Phi\left(x\right)+\left\langle\nabla F\left(x\right), {x}^+ - x\right\rangle + (\rho + \ell) D_{\omega}(x^+, x) + r\left(x^+\right) - r\left(x\right) \notag  \\
& = & \Phi\left(x\right) - \fr{1}{2\rho_1} \cD_{\rho_1}( x  )  + \rb{\rho + \ell - \rho_1}  D_{\omega}(x^+, x)  \notag \\
& \leq & \Phi\left(x\right) - \fr{1}{2 \rho_1} \cD_{\rho_1}( x  )  .
\end{eqnarray}
where the last equality holds by definitions of $x^+$, $ \cD_{\rho}( x  ) $ and the last step is due to condition $\rho_1 \geq \rho + \ell$. 

\textbf{Step II. One step progress on the Lyapunov function.}

   By the update rule of $x_{t+1}$ and applying \Cref{le:bregman_properties}, \cref{le:bregman_properties_optimality} with $z = x_t$, $z^+ = x_{t+1}$, $ x = \hat x_t$, we have
\begin{eqnarray}\label{eq:optimality_SMD}
\stepsize_t \langle \nabla f(x_t, \xi_t), \hat x_t - x_{t+1}\rangle + \stepsize_t (r(\hat x_t) - r(x_{t+1}) ) \geq D_{\omega}(\hat x_t, x_{t+1}) + D_{\omega}(x_{t+1}, x_t) - D_{\omega}(\hat x_t, x_t) .
\end{eqnarray}
    By the optimality of $\hat x_{t+1}$ and using the above inequality, we derive 
		\begin{align}
			 \Phi_{1/\rho}  \left(x_{t+1}\right)  &= \Phi\left(\hat{x}^{t+1}\right) + \rho D_{\omega}(\hat{x}^{t+1}, x_{t+1})  \nonumber \\
			&\leq \Phi\left(\hat{x}^{t}\right) + \rho D_{\omega}( \hat{x}^{t} , x_{t+1})  \nonumber\\
			& \overset{\eqref{eq:optimality_SMD}}{\leq} \Phi\left(\hat{x}^{t} \right)  +  \stepsize_t \rho \langle \nabla f(x_t, \xi_t), \hat x_t - x_{t+1}\rangle + \rho D_{\omega}(\hat x_t, x_t)  - \rho D_{\omega}(x_{t+1}, x_t)   + \stepsize_t \rho ( r(\hat x_t) - r(x_{t+1}) )\nonumber \\
            & =  \Phi_{1/\rho}\left( x_{t} \right) + \stepsize_t \rho  ( r(\hat x_t) - r(x_{t})   + \langle \nabla f(x_t, \xi_t), \hat x_t - x_{t}\rangle ) +  \rho   \stepsize_t  \langle \nabla f(x_t, \xi_t), x_t - x_{t+1}\rangle  \nonumber \\
            & \qquad   - \rho D_{\omega}(x_{t+1}, x_t)  + \stepsize_t \rho  ( r( x_t ) - r(x_{t+1 }) ) \nonumber .
		\end{align}
 
  We define $\lambda_t := \Phi_{1/\rho}  \left(x_{t}\right) - \Phi^* + \stepsize_{t-1} \rho ( \Phi(x_{t}) - \Phi^*) $, $\psi_t := \nabla f(x_t, \xi_t) - \nabla F(x_t)$ . Then using the above inequality
  \begin{eqnarray}
 	\lambda_{t+1} & := & \Phi_{1/\rho}  \left(x_{t+1}\right) - \Phi^* + \stepsize_{t} \rho ( \Phi(x_{t+1}) - \Phi^*)  \notag \\
 	& \leq & \Phi_{1/\rho}\left( x_{t} \right) - \Phi^*  + \stepsize_t \rho ( r(\hat x_t) - r(x_{t})   + \langle \nabla f( x_t, \xi_t ), \hat x_t - x_{t}\rangle ) \notag \\
 	&& \qquad  +  \rho  ( \stepsize_t  \langle \nabla f(x_t, \xi_t) - \nabla F(x_t) , x_t - x_{t+1}\rangle   -  D_{\omega}(x_{t+1}, x_t) ) \notag  \\
 	&& \qquad + \stepsize_t \rho ( r(x_t) + F(x_{t+1}) + \langle \nabla F(x_t) , x_t - x_{t+1}\rangle - \Phi^* ) \notag\\
 	& \overset{(i)}{\leq} & \Phi_{1/\rho}\left( x_{t} \right) - \Phi^*  + \stepsize_t \rho ( r(\hat x_t) - r(x_{t})   + \langle \nabla f(x_t, \xi_t), \hat x_t - x_{t}\rangle ) \notag \\
 	&& \qquad  +  \rho ( \stepsize_t  \langle \nabla f(x_t, \xi_t) - \nabla F(x_t) , x_t - x_{t+1}\rangle   - (1-\stepsize_t \ell) D_{\omega}(x_{t+1}, x_t) ) \notag  \\
 	&& \qquad + \stepsize_t \rho ( r(x_t) + F(x_{t}) - \Phi^* ) \notag \\
 	& = & \lambda_t   + (\stepsize_t - \stepsize_{t-1}) \rho ( \Phi(x_t) - \Phi^* ) + \stepsize_t \rho ( r(\hat x_t) - r(x_{t})   + \langle \nabla F(x_t), \hat x_t - x_{t}\rangle )  +  \rho   \stepsize_t  \langle \psi_t , \hat x_t - x_{t} \rangle  \notag \\
 	&& \qquad  +  \rho ( \stepsize_t  \langle \psi_t , x_t - x_{t+1}\rangle   - (1-\stepsize_t \ell) D_{\omega}(x_{t+1}, x_t) ) \notag  \\
 	& \overset{(ii)}{\leq} & \lambda_t  + (\stepsize_t - \stepsize_{t-1}) \rho ( \Phi(x_t) - \Phi^* )  - \fr{\stepsize_t \rho }{2 (\rho + \ell) } \cD_{\rho+\ell}( x_t  )  -  \stepsize_{t} \rho (\rho - \ell ) D_{\omega}(\hat x_t, x_t) +  \rho   \stepsize_t  \langle \psi_t , \hat x_t - x_{t} \rangle \notag \\
 	&& \qquad  +  \rho  ( \stepsize_t  \langle \psi_t , x_t - x_{t+1}\rangle   - (1-\stepsize_t \ell) D_{\omega}(x_{t+1}, x_t) )   \notag  \\
 	& \overset{(iii)}{\leq} & \lambda_t - \fr{\stepsize_t \rho }{2 (\rho + \ell) } \cD_{\rho + \ell}( x_t  )  +  \rho   \stepsize_t  \langle \psi_t , \hat x_t - x_{t} \rangle  +  \rho (  \stepsize_t  \langle \psi_t , x_t - x_{t+1}\rangle   - (1-\stepsize_t \ell) D_{\omega}(x_{t+1}, x_t) ) 
 	\label{eq:one_step_progress_Lyap} \\
 	&& \qquad - \stepsize_{t} \rho (\rho - \ell ) D_{\omega}(\hat x_t, x_t) , \notag 
 \end{eqnarray}
  where $(i)$ follows by relative smoothness (upper bound), i.e., $  F(x_{t+1})  \leq F(x_t) - \langle \nabla F(x_t), x_t - x_{t+1}\rangle + \ell D_{\omega}( x_{t+1}, x_t) $. The inequality $(ii)$ follows from relative smoothness (lower bound) of $F(\cdot)$ and \eqref{eq:deterministic_descent} (with $\rho_1 = \rho + \ell$) since
  \begin{eqnarray}\label{eq:bound_A}
  r(\hat x_t) - r(x_{t}) + \langle \nabla F(x_t), \hat x_t - x_{t}\rangle &\leq& r(\hat x_t) - r(x_{t}) + F(\hat x_t) - F(x_t) + \ell D_{\omega}(\hat x_t, x_t) \notag \\
  &=& \Phi_{1/\rho}(x_t) - \Phi(x_{t}) + (\ell - \rho ) D_{\omega}(\hat x_t, x_t)  \notag \\
  &\leq& \Phi_{1/\rho}(x_t) - \Phi(x_{t})  + (\ell - \rho ) D_{\omega}(\hat x_t, x_t)  \notag \\
  & \leq&  - \fr{1}{2 (\rho + \ell) } \cD_{\rho+\ell}( x_t  )  - (\rho - \ell ) D_{\omega}(\hat x_t, x_t)  . \notag 
  \end{eqnarray}
  The inequality $(iii)$ holds since the sequence $\cb{\eta_t}_{t\geq 0}$ is non-increasing. 

  \textbf{Step III. Dealing with stochastic terms.} Using $D_{\omega}(x_{t+1}, x_{t}) \geq \frac{1}{2}\sqnorm{x_{t+1} - x_t}$ and the bound on the variance of stochastic gradients, we have 
  \begin{eqnarray}\label{eq:bound_C}
   \Exp{ \stepsize_t  \langle \psi^t , x_t - x_{t+1}\rangle   -  (1-\stepsize_t \ell) D_{\omega}(x_{t+1}, x_t) } &\leq&  \Exp{ \stepsize_t  \langle \psi^t, x_t - x_{t+1}\rangle   - (1-\stepsize_t \ell)\fr{1}{2} \sqnorm{x_{t+1} - x_t} }  \notag \\
    &\leq& \fr{ \stepsize_t^2}{2 (1-\stepsize_t \ell) } \Exp{\sqnorm{ \psi^t }_*} \leq  \fr{ \stepsize_t^2 \sigma^2  }{2 (1-\stepsize_t \ell) } . 
  \end{eqnarray}
Define $\Lambda_t := \Exp{\lambda_t}$. Then combining \eqref{eq:one_step_progress_Lyap} with \eqref{eq:bound_C} and setting $\rho = 2\ell$, $\eta_t \leq 1/(2\ell)$, we derive for any non-increasing step-sizes $\eta_t$
\begin{eqnarray}\label{eq:descent_SMD}
\Lambda_{t+1} &\leq& \Lambda_t  - \frac{\stepsize_t \rho }{2 (\rho + \ell) }  \Exp{ \cD_{\rho+\ell}( x_t  ) } + \fr{ \rho \stepsize_t^2 \sigma^2  }{2 (1-\stepsize_t \ell) }  \leq \Lambda_t  - \frac{ \stepsize_t }{ 3 }  \Exp{ \cD_{3 \ell}( x_t  ) } + 2 \ell \stepsize_t^2 \sigma^2   . 
\end{eqnarray}

It remains to telescope and conclude the proof since
$$
\sum_{t=0}^{T-1} \stepsize_t \Exp{\cD_{3\ell}(x_t)} \leq 3 \Lambda_0 + 6\ell \sigma^2  \sum_{t=0}^{T-1} \eta_t^2 ,
$$
where the Lyapunov function in the initial point can be bounded (setting $\stepsize_{-1} = \stepsize_0$) as 
$$
\Lambda_0 = \lambda_0 =  \Phi_{1/\rho}(x_0) - \Phi^* + \stepsize_{-1} \rho ( \Phi(x_0) - \Phi^*) \leq  \Phi_{1/\rho}(x_0) - \Phi^* +  \Phi(x_0) - \Phi^* .
$$
The proof for constant step-size follows immediately from \eqref{eq:main_diminishing_sz}.


\end{proof}

\newpage

\section{Proof of Theorem~\ref{thm:high_prob_subgauss}: High Probability Convergence to FOSP under Sub-Gaussian Noise}

\begin{proof}
	We recall the definitions of $\lambda_t = \Phi_{1/\rho}  \left(x_{t}\right) - \Phi^* + \stepsize_{t-1} \rho ( \Phi(x_{t}) - \Phi^*) $, $\psi_t = \nabla f(x_t, \xi_t) - \nabla F(x_t)$, and invoke \eqref{eq:one_step_progress_Lyap} from the proof of Theorem~\ref{thm:SMD}
	\begin{eqnarray}
		\lambda_{t+1} & = & \lambda_t - \fr{\stepsize_t \rho }{2 (\rho + \ell) } \cD_{\rho + \ell}( x_t  )  +  \rho   \stepsize_t  \langle \psi_t , \hat x_t - x_{t} \rangle  +  \rho   \stepsize_t  \langle \psi_t , x_t - x_{t+1}\rangle   - \rho (1-\stepsize_t \ell) D_{\omega}(x_{t+1}, x_t)  \notag \\
		&& \qquad - \stepsize_{t} \rho (\rho - \ell ) D_{\omega}(\hat x_t, x_t) \notag \\
		& \leq & \lambda_t - \fr{\stepsize_t \rho }{2 (\rho + \ell ) } \cD_{\rho+ \ell}( x_t  )  +  \rho   \stepsize_t  \langle \psi_t , \hat x_t - x_{t} \rangle  +  \frac{\rho   \stepsize_t^2 \sqnorm{ \psi_t }_* }{2 (1-\stepsize_t \ell) }  - \stepsize_{t} \rho (\rho - \ell ) D_{\omega}(\hat x_t, x_t) .  \label{eq:one_step_progress_Lyap_hp}
	\end{eqnarray}
	Define a (normalization) scalar $w := \frac{\rho - \ell}{6 \sigma^2 \rho \eta_0} > 0$, and a sequence $S_t := \sum_{\tau=t}^{T-1} Z_\tau$, where 
	$$
	Z_t := w \rb{ \lambda_{t+1} - 
		\lambda_t + \frac{\stepsize_t \rho }{2 ( \rho + \ell) } \cD_{\rho + \ell}(x_t) }  .
	$$
	
	Now we define the filtration $\mathcal F_t = \cb{x_0, \xi_0, x_1, \ldots, \xi_{t-1}, x_{t}}$ and compute the moment generating function (MGF) of $Z_t$ for any $0\leq t \leq T-1$
	\begin{eqnarray*}
		\Exp{ \exp(Z_t) \vert \mathcal{F}_t} & = & \Exp{ \exp \rb{ w \rb{ \lambda_{t+1} - 
					\lambda_t + \frac{\stepsize_t \rho }{2 ( \rho + \ell) }  \cD_{\rho + \ell}(x_t) }  }  \vert \mathcal F_t } \\
		&\overset{\eqref{eq:one_step_progress_Lyap_hp}}{\le}& \Exp{\exp\rb{ w  \rho \stepsize_t \langle \psi^t, \hat x_t - x_t \rangle + \frac{w \rho \stepsize_t^2 \norm{\psi_t}_*^2}{2(1 - \stepsize_t \ell ) }  - w \stepsize_{t} \rho (\rho - \ell ) D_{\omega}(\hat x_t, x_t)  } \vert \mathcal F_t  } \\
		&=& \exp\rb{ - w \stepsize_{t} \rho (\rho - \ell ) D_{\omega}(\hat x_t, x_t)  } \Exp{\exp\rb{ w  \rho \stepsize_t \langle \psi^t, \hat x_t - x_t \rangle + \frac{w \rho \stepsize_t^2 \norm{\psi_t}_*^2}{2(1 - \stepsize_t \ell ) }  }  \vert \mathcal F_t } \\
		&\overset{(i)}{\le}& \exp\rb{ - w \stepsize_{t} \rho (\rho - \ell ) D_{\omega}(\hat x_t, x_t)  } \exp\rb{ 3 \sigma^2 w^2 \rho^2 \stepsize_t^2 \norm{\hat x_t - x_t}^2 + \frac{3 \sigma^2 w \rho \stepsize_t^2 }{2(1 - \stepsize_t \ell) } }  \\
		&\overset{(ii)}{\le}&  \exp\rb{ \frac{3 \sigma^2 w \rho \stepsize_t^2 }{2(1 - \eta_t \ell)} } ,
	\end{eqnarray*}
	where in $(i)$ we apply Lemma~\ref{le:lemma22_liu}, which uses that $\norm{\psi_t}_*$ is $\sigma$-sub-Gaussian. Inequality $(ii)$ holds by the fact that $\sqnorm{\hat x_t - x_t} \leq 2 D_{\omega}(\hat x_t, x_t) $, and the choice of $w$, which guarantess that $6 \sigma^2 w^2 \rho^2 \stepsize_{t}^2 \leq w \stepsize_{t} \rho (
	\rho - \ell) $ for any $t \geq 0$. Now to compute the MGF of $S_t$ we use derive
	$$
	\Exp{\exp(S_t) \vert \mathcal{F}_t } = \Exp{ \Exp{ \exp(S_{t+1} + Z_t) \vert \mathcal{F}_{t+1} }\vert \mathcal{F}_t} = \Exp{ \exp(Z_t) \Exp{ \exp(S_{t+1} ) \vert \mathcal{F}_{t+1} }\vert \mathcal{F}_t} .
	$$
	Thus, by induction we have
	$$
	\Exp{ S_0} \le \exp\rb{  \frac{3 \sigma^2 \rho w }{ 2 } \sum_{t=0}^{T-1} \frac{ \stepsize_t^2}{(1 - \stepsize_t \ell)}  } \leq \exp\rb{  3 \sigma^2 \rho w \sum_{t=0}^{T-1}  \stepsize_t^2  } .
	$$
	where the last inequality holds by the condition $\stepsize_t \leq 1/(2\ell)$. Consequently, by Markov's inequality,
	$$
	\Pr \rb{ S_0 \ge 3 \sigma^2 \rho \sum_{t=0}^{T-1} w_t \stepsize_{t}^2 + \log \rb{ \nfr 1 \beta }  } \le \beta.
	$$
	Then with probability at least $1 - \beta $, we have 
	$$
	\sum_{t=0}^{T-1} w \rb{ \lambda_{t+1} - 
		\lambda_t + \frac{\stepsize_t \rho}{2 (\rho + \ell) } \cD_{\rho + \ell}(x_t) } \leq 3 \sigma^2 w \rho \sum_{t=0}^{T-1}  \stepsize_{t}^2  + \log \rb{ \nfr 1 \beta } .
	$$
	Telescoping the above inequality, setting $\rho = 4\ell$, and dividing by the sum of step-sizes:
	\begin{eqnarray*}
\frac{1}{ \sum_{t=0}^{T-1} \stepsize_t } \sum_{t=0}^{T-1} \stepsize_t  \, \cD_{5\ell}(x_t) &\leq& \frac{  \lambda_0 + \frac{1}{w} \log \rb{ \nfr 1 \beta } +  12 \sigma^2 \ell  \sum_{t=0}^{T-1} \stepsize_t^2 }{ \frac{2}{5} \sum_{t=0}^{T-1} \stepsize_t }  \\ 
& = &  \fr{  \lambda_0 + 8 \, \stepsize_0 \sigma^2 \log \rb{ \nfr 1 \beta } + 12 \sigma^2 \ell  \sum_{t=0}^{T-1} \stepsize_t^2  }{\frac{2}{5} \sum_{t=0}^{T-1} \stepsize_t } .
	\end{eqnarray*}

It remains to bound the Lyapunov function in the initial point setting $\stepsize_{-1} = \stepsize_0$: 
$$
\lambda_0 = \Phi_{1/\rho}(x_0) - \Phi^* + \stepsize_{-1} \rho ( \Phi(x_0) - \Phi^*) \leq  \Phi_{1/\rho}(x_0) - \Phi^* + 2 ( \Phi(x_0) - \Phi^* ) .
$$

\end{proof}

\newpage

\section{Proof of Theorem~\ref{thm:SGD_KL}. Global Convergence under Generalized Proximal P{\L} Condition }\label{sec:Global_GProxPL}

\begin{proof}[Proof of Theorem~\ref{thm:SGD_KL}]
    Invoking \eqref{eq:descent_SMD}, and substituting the value of $\rho = 2\ell$ and using $\stepsize_t \leq \frac{1}{2\ell}$, we derive under Assumption~\ref{ass:KL} 
    $$
    \Lambda_{t+1} \leq \Lambda_t - \frac{ \stepsize_t  }{ 3 } \Exp{ \cD_{3 \ell}( x_t  ) }  +  2 \ell \stepsize^2 \sigma^2  \leq \Lambda_t - \frac{2 \mu \stepsize_t  }{ 3 } \Exp{ (\Phi(x_t) - \Phi^*)^{\nfr{2}{\alpha}} } +  2 \ell \stepsize^2 \sigma^2  ,
    $$
    Notice that $\Phi(x) \geq \Phi_{1/\rho}(x)$ for any $x\in \cX$, thus 
    $$
    \Exp{\Phi(x_t) - \Phi^*} \geq \frac{1}{1+\stepsize_{t-1}\rho} \Exp{ \Phi_{1/\rho}(x_t) - \Phi^* } + \frac{\stepsize_{t-1}\rho}{1+\stepsize_{t-1}\rho} \Exp{ \Phi(x_t) - \Phi^* } = \frac{\Lambda_t}{1+\stepsize_{t-1} \rho} \geq \frac{1}{2} \Lambda_t .
    $$
  By Jensen’s inequality for $z \rightarrow z^{\nfr{2}{\alpha}}$, we have $\Exp{(\Phi(x_t) - \Phi^*)^{\nfr{2}{\alpha}}} \geq  \rb{\Exp{\Phi(x_t) - \Phi^*} }^{\nfr{2}{\alpha}} $.
  Combining the above inequalities, we get
  $\Exp{(\Phi(x_t) - \Phi^*)^{\nfr{2}{\alpha}}} \geq \frac{1}{2} \Lambda_t^{\nfr{2}{\alpha}}$. Thus, we can derive a recursion
    $$
    \Lambda_{t+1} \leq \Lambda_t -  \frac{ \stepsize_t \mu }{3}  \Lambda_t^{\nfr{2}{\alpha}}  + 2 \ell   \sigma^2 \stepsize_t^2 .
    $$
    
     Assume that for $\tau = 0, \ldots, t$, we have $\Lambda_\tau \geq \varepsilon$ (otherwise we have reached $\varepsilon$-accuracy). Then
	$$
	\Lambda_{t+1}  \leq \rb{ 1  -  \frac{\stepsize_t \mu \varepsilon^{\fr{2-\al}{\al}} }{3}  } \Lambda_t  + 2 \ell  \sigma^2 \stepsize_t^2  ,
	$$
    For any $T \geq 0$, we select the step-size sequence $\stepsize_{t}$ as follows: 
    $$
    \stepsize_{t} = \begin{cases}
    	\frac{1}{2\ell}  & \text{if } t < \lceil T/2 \rceil \text{ and } T \leq \frac{6\ell}{\mu \, \varepsilon^{\fr{2-\al}{\al}}  } , \\
    	\frac{6}{\mu  \,  \varepsilon^{\fr{2-\al}{\al}} \rb{ t + \fr{12 \ell}{\mu } \varepsilon^{- \fr{2 - \al}{\al}}  - \lceil \nfr{T}{2} \rceil }}   & \text{otherwise.} 
    \end{cases}
    $$
    By Lemma~\ref{le:L3_noise_adaptive_rate}, we have 
     $$
     \Lambda_{T+1} = \cO\rb{\frac{ \ell \Lambda_0}{\mu \, \varepsilon^{\fr{2-\al}{\al}}  } \exp\rb{ - \frac{\mu \, \varepsilon^{\fr{2-\al}{\al}} T}{ \ell}} + \frac{  \ell \sigma^2}{T \mu^2 \varepsilon^{\fr{2( 2-\al ) }{\al}}  } }  .
    $$
    
	Recalling the definition of $\Lambda_t$ and using Lemma~\ref{le:MEnv_Phi_connection}, we bound $\Lambda_{T+1} \geq \Phi_{1/\rho}(x_{T+1}) - \Phi^* \geq \Phi(x_{T+1}^+) - \Phi^* $, where $ x^+ := \argmin_{y \in \cX} \, \langle \nabla F(x), y \rangle + r(y) + \ell  D_{\omega}(y, x) . $ The sample complexity to reach $ \Phi(x_{T+1}^+) - \Phi^* \leq \varepsilon$ is 
	$$
	T = \cO\rb{ \fr{\ell \Lambda_0}{ \mu} \frac{1}{ \varepsilon^{\fr{2-\al}{\alpha}} }\log\rb{ \fr{\ell \Lambda_0  }{ \mu \varepsilon } } +  \fr{ \ell \Lambda_0 \sigma^2}{\mu^2} \frac{1}{\varepsilon^{\fr{4-\alpha}{\alpha}}}  }  . 
	$$
	
\end{proof}

\newpage

\section{Proofs for Applications}

\subsection{Differentially Private Learning in $\ell_2$ and $\ell_1$ Settings}
  \begin{proof}[Proof of Corollary~\ref{cor:DP_SMD}]
	Notice that by Lemma~\ref{le:max_tail} $\norm{b_t}_{\infty}$ is $\sigma$-sub-Gaussian r.v.\,with $\sigma^2 = 2 \log(2 d) \sigma_G^2$.\footnote{Here we used the fact that $\max_{1\leq i \leq d} |\xi_i| = \max\cb{\xi_1, -\xi_1, \ldots, \xi_d, - \xi_d }$ and applied Lemma~\ref{le:max_tail} with $n = 2 d$. } We invoke the result of Theorem~\ref{thm:high_prob_subgauss} with $\stepsize_t = \stepsize_0 = \frac{1}{2\ell}$, and obtain 
	\begin{eqnarray*}
	\frac{1}{T} \sum_{t=0}^{T-1}  \cD_{5\ell}(x_t) &=& \cO\rb{  \frac{\lambda_0}{\stepsize_0 T } + \sigma^2 \stepsize_0 \ell + \frac{\sigma^2 \log \rb{\fr 1 \beta } }{  T }  } = \cO\rb{  \frac{\ell \lambda_0}{ T } +  \sigma^2 \log \rb{\fr 1 \beta }  } \\
	&=& \cO\rb{  \frac{\ell \lambda_0}{ T } +  \frac{G^2 T \log(d) \log\rb{\fr 1 \delta}}{n^2 \epsilon^2}  \log \rb{\fr 1 \beta }  }  \\
& = & \cO\rb{ \frac{G \sqrt{\ell \lambda_0 \log(d) \log\rb{\fr 1 \delta} \log\rb{\fr 1 \beta}} }{n \epsilon} } ,
		\end{eqnarray*}
	where the last equality follows by the choice of $T$. It remains to notice that $\lambda_0 = \Phi_{1/\rho}(x_0) - \Phi^* + 2 ( \Phi(x_0) - \Phi^*) \leq 3 ( \Phi(x_0) - \Phi^*) $.
\end{proof}

\begin{corollary}\label{cor:DP_Prox_GD}
	Let $F(\cdot)$ be differentiable on a convex set $\cX$ with $L$-Lipschitz continuous gradient w.r.t.\,Euclidean norm, and $\norm{\nabla F(x)}_2 \leq G$ for all $x \in \cX$. Set $\eta_t = \frac{1}{2 L }$, $T = \frac{n \epsilon \sqrt{ L }}{G \sqrt{ d  \log\rb{\nfr 1 \delta} \log \rb{\nfr 1 \beta }}}$, $\lambda_0 := \Phi(x_0) - \Phi^*$. Then \algname{DP-Prox-GD} is $(\epsilon, \delta)$-DP and with probability $1-\beta$ satisfies 
	$$
	\frac{1}{T} \sum_{t=0}^{T-1}  \cD_{5\ell}(x_t)  = \cO\rb{ \frac{G \sqrt{\ell \lambda_0 d \log\rb{\nfr 1 \delta} \log\rb{\nfr 1 \beta}} }{n \epsilon} } ,
	$$
\end{corollary}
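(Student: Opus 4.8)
The plan is to mirror the proof of Corollary~\ref{cor:DP_SMD}, specializing to Euclidean geometry where $\omega(x) = \frac12\sqnorm{x}_2$, so that $D_\omega(x,y) = \frac12\sqnorm{x-y}_2$, the dual norm is $\norm{\cdot}_2$, and $L$-Lipschitz continuity of $\nabla F$ yields Assumption~\ref{ass:rel_smooth} with $\ell = L$. The argument splits into three parts: privacy, sub-Gaussian control of the injected noise, and an application of Theorem~\ref{thm:high_prob_subgauss}. Since the \algname{DP-Prox-GD} update is exactly method \eqref{eq:SMD} with this $\omega(\cdot)$ and stochastic gradient $\nabla F(x_t) + b_t$, $b_t \sim \mathcal{N}(0, \sigma_{\mathrm{G}}^2 I_d)$, everything reduces to identifying the correct noise scale.

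For privacy, I would invoke the moments accountant result of \citet{abadi2016deep} directly: because $\norm{\nabla F(x)}_2 \leq G$ on $\cX$ and the gradient is perturbed by $b_t \sim \mathcal{N}(0,\sigma_{\mathrm{G}}^2 I_d)$, choosing $\sigma_{\mathrm{G}}^2 = c_2 \, G^2 T \log(\nfr 1 \delta)/(n^2 \epsilon^2)$ guarantees $(\epsilon,\delta)$-DP. The proximal/projection step is a deterministic post-processing of the privatized gradient and hence does not degrade the privacy guarantee.

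The main step is identifying the effective sub-Gaussian parameter for the utility bound. Here $b_t$ plays the role of $\nabla f(x_t,\xi_t) - \nabla F(x_t)$ in Assumption~\ref{ass:subgauss}, so I need the sub-Gaussian constant of $\norm{b_t}_* = \norm{b_t}_2$. Since $\norm{b_t}_2^2 = \sum_{i=1}^d (b_t^{(i)})^2$ is a scaled $\chi^2_d$ variable, a direct MGF computation gives $\Exp{\exp(\lambda^2 \norm{b_t}_2^2)} = (1 - 2\lambda^2 \sigma_{\mathrm{G}}^2)^{-d/2}$, and applying $-\log(1-u) \leq 2u$ for $u \leq 1/2$ shows $\norm{b_t}_2$ is $\sigma$-sub-Gaussian (in the sense of the footnote to Assumption~\ref{ass:subgauss}) with $\sigma^2 = \cO(d\,\sigma_{\mathrm{G}}^2)$. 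This is precisely where the dimension $d$ enters linearly, in contrast with the $\ell_1$ setting of Corollary~\ref{cor:DP_SMD} where Lemma~\ref{le:max_tail} delivers $\sigma^2 = \cO(\log(d)\,\sigma_{\mathrm{G}}^2)$; this single difference is the source of the $\sqrt{d/\log d}$ gap between the two bounds.

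Finally I would apply Theorem~\ref{thm:high_prob_subgauss} with the constant step-size $\eta_t = \frac{1}{2L}$. Using $\sum_{t=0}^{T-1}\eta_t = T/(2L)$, $\sum_{t=0}^{T-1}\eta_t^2 = T/(4L^2)$, and $\wt\lambda_0 = 3\lambda_0 + 8\eta_0 \sigma^2 \log(\nfr 1 \beta)$, the theorem yields, with probability $1-\beta$,
\[
\frac{1}{T}\sum_{t=0}^{T-1}\cD_{5\ell}(x_t) = \cO\rb{\frac{L\lambda_0}{T} + \sigma^2 \log(\nfr 1 \beta)},
\]
exactly as in the simplification step of Corollary~\ref{cor:DP_SMD} (using $\log(\nfr 1 \beta) \geq 1$ to absorb the $\sigma^2 \eta_0 \ell$ term). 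Substituting $\sigma^2 = \cO(d\,\sigma_{\mathrm{G}}^2) = \cO\big(d G^2 T \log(\nfr 1 \delta)/(n^2\epsilon^2)\big)$ turns the second term into one growing linearly in $T$, and then inserting the stated choice $T = n\epsilon\sqrt{L}/\big(G\sqrt{d\log(\nfr 1 \delta)\log(\nfr 1 \beta)}\big)$ balances the two contributions to give the claimed $\cO\big(G\sqrt{\ell\lambda_0 d\log(\nfr 1 \delta)\log(\nfr 1 \beta)}/(n\epsilon)\big)$ bound, after bounding $\lambda_0 = \Phi(x_0)-\Phi^*$ as in Corollary~\ref{cor:DP_SMD}. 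I do not foresee a genuine obstacle: the only subtlety to check is that the admissible range $|\lambda| \leq 1/\sigma = 1/(\sqrt{2d}\,\sigma_{\mathrm{G}})$ in the sub-Gaussian definition lies safely inside the domain of finiteness $|\lambda| < 1/(\sqrt{2}\,\sigma_{\mathrm{G}})$ of the $\chi^2$ MGF (valid for $d \geq 2$), so the noise bound from the third paragraph is legitimate; the rest is bookkeeping of absolute constants.
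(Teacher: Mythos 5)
Your proposal follows essentially the same route as the paper: the paper's proof of Corollary~\ref{cor:DP_Prox_GD} simply refers back to Corollary~\ref{cor:DP_SMD} and notes that the only change is bounding $\norm{b_t}_2$ instead of $\norm{b_t}_\infty$, yielding $\sigma^2 = \cO(d\,\sigma_{\mathrm{G}}^2)$, followed by the same application of Theorem~\ref{thm:high_prob_subgauss} and balancing of $T$. Your explicit $\chi^2$ MGF computation (with the check that $|\lambda|\le 1/\sigma$ stays in the domain of finiteness) just fills in a detail the paper asserts without proof, so the argument is correct and matches the paper's.
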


\begin{proof}
	The proof follows the same lines as the proof of Corollary~\ref{cor:DP_SMD}. The only difference is that instead of the infinity norm of the noise, we bound the Euclidean norm, i.e., $\norm{b_t}_{2}$ is $\sigma$-sub-Gaussian r.v.\,with $\sigma^2 = d \, \sigma_G^2$.
\end{proof}

\subsection{Policy Optimization in Reinforecement Learning}

\textbf{Prox-P{\L} condition.}

Now we will verify Assumption~\ref{ass:KL} with $\alpha = 1$ holds for our RL problem. The result is similar to Lemma 5 in \citep{xiao22}. The only difference is that we have $\pi$ instead of $\pi^+$ on the left hand side of the inequality. 

\begin{lemma}\label{le:grad_dom} Let $\omega(\pi) = \frac{1}{2}\sqnorm{\pi}_{2,2} $. Then for any $\pi \in \cX$ we have 
$$
V_p(\pi)-V_p^{\star} \leq \frac{ 2 \sqrt{2 |\mathcal S| } }{1-\gamma}\left\|\frac{d_p\left(\pi^{\star}\right)}{\mu}\right\|_{\infty} \sqrt{\Delta_{\rho}^+(\pi)  } 
$$
if $\rho \geq   G_{V, \norm{\cdot}_{2, 2} } / D_{\cX, \norm{\cdot}_{2, 2}}$, where $G_{V, \norm{\cdot}_{2, 2}} := \max_{\pi \in \Pi}\sb{\norm{\nabla V_{p}(\pi) }_{2 , 2}}$.
\end{lemma}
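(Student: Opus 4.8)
The plan is to reduce the inequality to the gradient domination bound \eqref{eq:VaGD} and then control the resulting Frank--Wolfe gap by the gradient mapping $\Delta_\rho^+$. Since $\omega(\pi)=\frac12\sqnorm{\pi}_{2,2}$ is the Euclidean DGF and the RL objective carries no regularizer ($r\equiv 0$, the only constraint being $\pi\in\cX$), the auxiliary point in the definition of $\Delta_\rho^+$ is a projected-gradient step, $\pi^+=\opn{proj}_{\cX}\!\rb{\pi-\frac1\rho\nabla V_\mu(\pi)}$, and the symmetrized Euclidean divergence is $D_\omega^{\mathrm{sym}}(\pi^+,\pi)=\sqnorm{\pi-\pi^+}_{2,2}$. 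Hence $\Delta_\rho^+(\pi)=\rho^2\sqnorm{\pi-\pi^+}_{2,2}$ and $\sqrt{\Delta_\rho^+(\pi)}=\rho\norm{\pi-\pi^+}_{2,2}$. Applying \eqref{eq:VaGD} directly at $\pi$ (this is why $\pi$, not $\pi^+$, appears on the left, unlike Lemma 5 in \citep{xiao22}) gives
\[
V_p(\pi)-V_p^\star \le C\,\max_{\pi'\in\cX}\langle\nabla V_\mu(\pi),\pi-\pi'\rangle,\qquad C=\frac1{1-\gamma}\norm{\frac{d_p(\pi^\star)}{\mu}}_\infty,
\]
so it suffices to establish the gap bound $\max_{\pi'\in\cX}\langle\nabla V_\mu(\pi),\pi-\pi'\rangle\le 2\sqrt{2|\mathcal S|}\,\sqrt{\Delta_\rho^+(\pi)}$.

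To bound the gap, fix $\pi'\in\cX$ and split $\langle\nabla V_\mu(\pi),\pi-\pi'\rangle=\langle\nabla V_\mu(\pi),\pi-\pi^+\rangle+\langle\nabla V_\mu(\pi),\pi^+-\pi'\rangle$. For the first term I would simply use Cauchy--Schwarz together with the uniform gradient bound, giving $\langle\nabla V_\mu(\pi),\pi-\pi^+\rangle\le G_{V,\norm{\cdot}_{2,2}}\norm{\pi-\pi^+}_{2,2}$. For the second term I would invoke the first-order optimality (variational inequality) of the projection defining $\pi^+$: for every $y\in\cX$, $\langle\nabla V_\mu(\pi)+\rho(\pi^+-\pi),\,y-\pi^+\rangle\ge 0$, which rearranges to
\[
\langle\nabla V_\mu(\pi),\pi^+-y\rangle\le\rho\langle\pi-\pi^+,\pi^+-y\rangle\le\rho\norm{\pi-\pi^+}_{2,2}\norm{\pi^+-y}_{2,2}.
\]
Taking $y=\pi'$ and bounding $\norm{\pi^+-\pi'}_{2,2}\le D_{\cX,\norm{\cdot}_{2,2}}$ controls the second term by $\rho D_{\cX,\norm{\cdot}_{2,2}}\norm{\pi-\pi^+}_{2,2}$.

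It then remains to combine the two pieces and compute the diameter. Since $\cX=\Delta(\cA)^{|\mathcal S|}$ is a product of $|\mathcal S|$ probability simplices and a single simplex has Euclidean diameter $\sqrt2$, one gets $D_{\cX,\norm{\cdot}_{2,2}}=\sqrt{2|\mathcal S|}$. The hypothesis $\rho\ge G_{V,\norm{\cdot}_{2,2}}/D_{\cX,\norm{\cdot}_{2,2}}$, i.e.\ $G_{V,\norm{\cdot}_{2,2}}\le\rho D_{\cX,\norm{\cdot}_{2,2}}$, makes the two contributions balance: each is at most $\rho D_{\cX,\norm{\cdot}_{2,2}}\norm{\pi-\pi^+}_{2,2}$, so the gap is at most $2\rho D_{\cX,\norm{\cdot}_{2,2}}\norm{\pi-\pi^+}_{2,2}=2\sqrt{2|\mathcal S|}\,\sqrt{\Delta_\rho^+(\pi)}$ (the right-hand side is independent of $\pi'$, so the maximum is preserved). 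Multiplying by $C$ yields the claim. The main obstacle is precisely this translation between the \emph{functional} Frank--Wolfe gap produced by \eqref{eq:VaGD} and the \emph{primal} gradient-mapping quantity $\Delta_\rho^+(\pi)$: it requires both the projection variational inequality and the finite diameter of $\cX$, and the stated lower bound on $\rho$ is exactly what prevents the gradient-norm term from dominating.
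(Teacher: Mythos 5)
Your proposal is correct and follows essentially the same route as the paper: apply the variational gradient domination condition \eqref{eq:VaGD} at $\pi$, bound the resulting Frank--Wolfe gap by $\bigl(D_{\cX,\norm{\cdot}_{2,2}}+\rho^{-1}G_{V,\norm{\cdot}_{2,2}}\bigr)\sqrt{\Delta_\rho^+(\pi)}$, use the hypothesis on $\rho$ to absorb the gradient term into the diameter term, and finish with $D_{\cX,\norm{\cdot}_{2,2}}\le\sqrt{2|\mathcal S|}$. The only difference is that the paper cites the gap-to-gradient-mapping inequality from Lemma~\ref{le:FWGap_BGM} (Lemma 2.2 of \citet{balasubramanian2022zeroth}), whereas you reprove it via the projection variational inequality; your derivation is a valid self-contained proof of that cited step.
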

\begin{proof}
It was shown in Lemma 4 in \citep{agarwal-et-al21} that the following (variational) gradient domination condition holds. 
$$
V_p(\pi)-V_p^{\star} \leq \frac{1}{1-\gamma}\left\|\frac{d_p\left(\pi^{\star}\right)}{\mu}\right\|_{\infty} \max_{\pi^{\prime} \in \cX} \left\langle\nabla V_\mu(\pi), \pi-\pi^{\prime}\right\rangle \quad \text{for any } \pi \in \cX .
$$
By Lemma~\ref{le:FWGap_BGM}, we have 
$$
\max_{\pi^{\prime} \in \cX} \left\langle\nabla V_\mu(\pi), \pi-\pi^{\prime}\right\rangle  \leq  \rb{ D_{\cX, \norm{\cdot}_{2,2}} + \rho^{-1} G_{V, \norm{\cdot}_{2, 2}} } \sqrt{\Delta_{\rho}^+(\pi)} \leq 2 D_{\cX, \norm{\cdot}_{2,2}} \sqrt{ \Delta_{\rho}^+(\pi)  } 
$$
where the last inequality holds for $\rho \geq   G_{V, \norm{\cdot}_{2, 2} } / D_{\cX, \norm{\cdot}_{2, 2}}$. It remains to notice that $D_{\cX, \norm{\cdot}_{2, 2}} \leq \sqrt{2 |\mathcal{S}|}$.

\textbf{Smoothness in $(2, 1)$-norm.} Proof of Proposition~\ref{prop:RL_smooth_PL}

For any $\pi, \pi^{\prime} \in \cX$, it holds that
$$
\left\|\nabla V_p(\pi)-\nabla V_p\left(\pi^{\prime}\right)\right\|_{2, \infty} \leq \frac{2 \gamma }{(1-\gamma)^3} \left\|\pi - \pi^{\prime}\right\|_{2, 1} ,
$$

The estimate of the smoothness constant follows directly from the proof of Lemma 54 in \citep{agarwal-et-al21}, since using $(2,1)$ norm we have $\sum_{a \in \mathcal A} |u_{a , s}| \leq 1$, and the perturbation $u_{a, s}$ belongs to the probability simplex $u_{a, s} \in \Delta(\mathcal A)$.

\end{proof}

\subsection{Training Autoencoder Model using SMD}\label{sec:DNN_appendix}

\textbf{Derivation of \algname{SMDr1} and \algname{SMDr2}.}
Recall the choice of DGF from subsection~\ref{sec:DNN}
$$
\omega(x)=\frac{1}{r+2}\|x\|_2^{r+2}+\frac{1}{2}\|x\|_2^2 . 
$$
 Notice that we have $\nabla \omega(x) = \norm{x}_2^{r} x + x$. The update rule of \algname{SMD} with $\cX = \R^d$, $r(x)=0$ and the above choice of DGF satisfies
$$
\stepsize_{t} \nabla f(x_t, \xi_t) + \nabla \omega(x_{t+1}) - \nabla \omega(x_{t}) = 0. 
$$ 
Define $c_t := \nabla \omega(x_t) - \stepsize_{t} \nabla f(x_t, \xi_t) = x_t - \stepsize_{t} \nabla f(x_t, \xi_t) + \norm{x_t}_2^r x_t$. Thus,  it remains to solve for $x_{t+1}$ 
\begin{equation}\label{eq:derivation_of_SMDr}
\nabla \omega (x_{t+1}) = (\norm{x_{t+1}}_2^r + 1) \, x_{t+1} = c_t ,  
\end{equation}
which is equivalent to solving the following simple univariate equation of $\theta \geq 0 $:
\begin{equation}\label{eq:univ_equation}
\theta^{r+1} + \theta = \norm{c_t}_2 . 
\end{equation}
For $r = 1, 2$, it has an explicit form solution for any $\norm{c_t}_2 . $ We have
\begin{eqnarray*}
	\theta_* =	\frac{-1 + \sqrt{1+ 4 \norm{c_t}_2}}{2} \qquad \text{for } r = 1 . 
\end{eqnarray*}
and obtain the following method
\begin{eqnarray*}
	c_t &=& x_t - \stepsize_{t} \nabla f(x_t, \xi_t) + \norm{x_t}_2 x_t , \\
	x_{t+1} &=& \frac{2 c_t}{1 + \sqrt{1 + 4 \norm{c_t}_2}} .
\end{eqnarray*}

For $r = 2$, an explicit form solution can be written using Cardano's formula. We use Python \texttt{Sympy} library for symbolic calculation to solve for $\theta$ in this case.

More generally, \eqref{eq:derivation_of_SMDr} implies for any $r > 0$, we have 
\begin{eqnarray*}
c_t  &= &(1 + \norm{x}_2^r) \, x_t - \stepsize_{t} \nabla f(x_t, \xi_t ) , \\
x _{t+1} &=&\frac{c_t}{1 + \theta_*^r} ,
\end{eqnarray*}
where $\theta_*$ is the solution to \eqref{eq:univ_equation}, which can be solved using a bisection method up to the machine accuracy. 

\begin{corollary}\label{cor:SMD1}
	Let $F(\cdot): \R^d \rightarrow \R$ be twice differentiable and satisfy \eqref{eq:L_Lr_condition}. Let Assumption~\ref{ass:BV} hold with $\norm{\cdot}_2$. Suppose the sequence $\cb{\eta_t}_{t\geq0}$ be non-increasing with $\eta_0 \leq 1/(2\ell)$, and $\bar{x}_T \in \cX$ be randomly chosen from the iterates $x_0, \ldots, x_{T-1}$ with probabilities $p_t = \stepsize_t / \sum_{t=0}^{T-1}\stepsize_t$. Then for \eqref{eq:SMDr_line1}, \eqref{eq:SMDr_line2}, we have 
	\begin{equation*}\label{eq:SMDr1}
		\Exp{ \sqnorm{\nabla F(\bar x_T)}_2 } \leq \frac{6 (F(x_0) - F^*) + 6 \ell \sigma^2  \sum_{t=0}^{T-1} \eta_t^2 }{ \sum_{t=0}^{T-1} \eta_t } , 
	\end{equation*}
where $F^* := \min_{y\in \R^d} F(y)$.
\end{corollary}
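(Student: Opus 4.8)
The plan is to obtain Corollary~\ref{cor:SMD1} as a direct specialization of Theorem~\ref{thm:SMD} to the unconstrained Euclidean setting $\cX = \R^d$, $r(\cdot) = 0$, with the distance generating function $\omega(x) = \fr{1}{r+2}\norm{x}_2^{r+2} + \fr{1}{2} \sqnorm{x}_2$. The first task is to confirm that the iterates \eqref{eq:SMDr_line1}--\eqref{eq:SMDr_line2} really are the \algname{SMD} updates \eqref{eq:SMD} for this $\omega(\cdot)$: since $\nabla \omega(x) = (\norm{x}_2^r + 1)\, x$, the optimality condition $\stepsize_t \nabla f(x_t, \xi_t) + \nabla\omega(x_{t+1}) - \nabla\omega(x_t) = 0$ reduces to $(\norm{x_{t+1}}_2^r + 1)\, x_{t+1} = c_t$, which is solved by the one-dimensional reduction \eqref{eq:SMDr_line2} (the derivation already carried out in this appendix). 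Once this correspondence is fixed, the proof reduces to checking the hypotheses of Theorem~\ref{thm:SMD} and unwinding its conclusion.

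To invoke Theorem~\ref{thm:SMD} I would verify its two standing assumptions. Assumption~\ref{ass:rel_smooth} follows from Proposition~\ref{prop:polygrow_hess}: condition \eqref{eq:L_Lr_condition} guarantees that $F(\cdot)$ is $\ell$-relatively smooth with respect to this exact $\omega(\cdot)$ with $\ell = \max\cb{L, L_r}$. Moreover $\omega(\cdot)$ is $1$-strongly convex with respect to $\norm{\cdot}_2$ (it is the sum of the convex term $\fr{1}{r+2}\norm{x}_2^{r+2}$ and the $1$-strongly convex term $\fr{1}{2}\sqnorm{x}_2$), so the DGF requirement of the preliminaries is met with $\norm{\cdot} = \norm{\cdot}_2$ and $\cS = \R^d$. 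Assumption~\ref{ass:BV} is assumed in the statement, with the dual norm $\norm{\cdot}_* = \norm{\cdot}_2$ since the Euclidean norm is self-dual. For non-increasing step-sizes with $\eta_0 \le 1/(2\ell)$, Theorem~\ref{thm:SMD} then yields $\Exp{\cD_{3\ell}(\bar x_T)} \le \rb{3\lambda_0 + 6\ell\sigma^2\sum_{t} \eta_t^2}/\sum_t \eta_t$ with $\lambda_0 = \Phi_{1/\rho}(x_0) - \Phi^* + \Phi(x_0) - \Phi^*$.

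Two final steps convert this into the stated bound. First, since $\Phi = F$ (as $r \equiv 0$) and $\Phi^* = F^*$, taking $y = x_0$ in the definition of the Bregman--Moreau envelope gives $\Phi_{1/\rho}(x_0) \le F(x_0)$, hence $\lambda_0 \le 2(F(x_0) - F^*)$ and $3\lambda_0 \le 6(F(x_0) - F^*)$, matching the numerator in the claim. Second -- and this is the step I expect to be the main obstacle -- one must pass from the BFBE $\cD_{3\ell}(\bar x_T)$ to the Euclidean gradient norm $\sqnorm{\nabla F(\bar x_T)}_2$. In the unconstrained case one computes $\cD_\rho(x) = 2\rho^2 D_\omega(x, x^+)$ together with the optimality relation $\nabla F(x) = \rho\rb{\nabla\omega(x) - \nabla\omega(x^+)}$, so the clean identity $\cD_\rho(x) = \sqnorm{\nabla F(x)}_2$ used for the Euclidean DGF rests on $D_\omega$ coinciding with $\fr{1}{2}\sqnorm{\cdot}_2$; for the polynomial $\omega(\cdot)$ this is no longer exact, and the delicate point is to justify the reduction to the gradient norm carefully, exploiting the $1$-strong convexity of $\omega(\cdot)$ with respect to $\norm{\cdot}_2$ and the structure of $x^+$. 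I would isolate this translation as a short lemma, after which the result follows by substituting $\rho = 2\ell$ and $\lambda_0 \le 2(F(x_0)-F^*)$ into the telescoped inequality of Theorem~\ref{thm:SMD}.
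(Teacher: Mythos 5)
Your overall route is the same one the paper takes: specialize Theorem~\ref{thm:SMD} to $\cX=\R^d$, $r\equiv 0$ with $\omega(x)=\frac{1}{r+2}\norm{x}_2^{r+2}+\frac{1}{2}\sqnorm{x}_2$, invoke Proposition~\ref{prop:polygrow_hess} for relative smoothness, and bound $\lambda_0=\Phi_{1/\rho}(x_0)-\Phi^*+\Phi(x_0)-\Phi^*\leq 2(F(x_0)-F^*)$ to get the factor $6$ in the numerator. All of that is correct and matches the paper, which states the corollary as an immediate consequence of Theorem~\ref{thm:SMD} without further argument.

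The genuine gap is exactly the step you flag and then defer to an unproven ``short lemma'': passing from $\Exp{\cD_{3\ell}(\bar x_T)}$ to $\Exp{\sqnorm{\nabla F(\bar x_T)}_2}$. This is not a routine translation, and the tool you propose (the $1$-strong convexity of $\omega$ w.r.t.\ $\norm{\cdot}_2$) gives the inequality in the \emph{wrong} direction: since $D_{\omega}(y,x)\geq\frac{1}{2}\sqnorm{y-x}_2$, one gets $\min_y\{\langle\nabla F(x),y-x\rangle+\rho D_{\omega}(y,x)\}\geq -\frac{1}{2\rho}\sqnorm{\nabla F(x)}_2$, hence $\cD_{\rho}(x)\leq\sqnorm{\nabla F(x)}_2$. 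Equivalently, in the unconstrained case $\cD_{\rho}(x)=2\rho^2 D_{\omega}(x,x^+)$ with $\nabla F(x)=\rho(\nabla\omega(x)-\nabla\omega(x^+))$, and strong convexity of $\omega$ (i.e.\ smoothness of $\omega^*$) yields $D_{\omega}(x,x^+)\leq\frac{1}{2}\sqnorm{\nabla\omega(x)-\nabla\omega(x^+)}_2$, again the useless direction. The reverse bound $\sqnorm{\nabla F(x)}_2\leq C\,\cD_{\rho}(x)$ would require $\nabla\omega$ to be Lipschitz on the relevant region, which fails globally for the polynomial DGF precisely because $\nabla^2\omega(x)\succeq I$ grows with $\norm{x}_2$ (and when $\nabla^2\omega(x)\succ I$ the achievable decrease in $Q_{\rho}(x,\cdot)$ is governed by $\sqnorm{\nabla F(x)}_{(\nabla^2\omega(x))^{-1}}$, which can be much smaller than $\sqnorm{\nabla F(x)}_2$). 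So the lemma you hope to isolate does not hold as stated without an extra ingredient (e.g.\ a bound on the iterates making $\omega$ effectively smooth along the trajectory, at the price of a different constant). The paper supplies no such argument either, so your instinct that this is ``the main obstacle'' is right --- but as a proof of the corollary in its stated form, the proposal is incomplete at exactly the point that carries the content.
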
  

\textbf{Additional experimental details.} We use \texttt{Fashion-MNIST} dataset \citep{xiao2017fashion} for training with images of dimensions $d_f = 28 \times 28 = 784$.  The encoding dimension is fixed to $d_e = 64$. The dataset is of size $50000$ images. In all experiments, we use the mini-batch of size $100$. We initialize the parameters $W$ of the model with a normal distribution with mean $1$ and the standard deviation $0.01$. 
 
  \begin{remark}
	A momentum variant of the scheme \eqref{eq:SMDr_line1}, \eqref{eq:SMDr_line2} was recently explored in \citep{ding2023_NC_MSBPG} with promising empirical results on image classification and language modeling tasks. We hope that our simpler variant without momentum can be also helpful in these tasks. 
\end{remark}

\textbf{Additional discussion about $(L_0, L_1)$-smoothness.}
Recently, some works, e.g.,  \citep{zhang2019gradient_clipping_L0L1,faw2023_adagrad_L0L1,hubler2023parameter}, consider adaptive gradient methods such as gradient clipping, AdaGrad-Norm and gradient normalization under $(L_0, L_1)$-smoothness, i.e., $F(\cdot)$ is twice differentiable and for some $L_0, L_1 \geq 0$ satisfies $\norm{\nabla F(x)}_{\text{op}} \leq L_0 + L_1 \norm{\nabla F(x)}_2$ for all $x\in \R^d$. The authors in \citep{zhang2019gradient_clipping_L0L1,faw2023_adagrad_L0L1} justify the theoretical benefits of the popular adaptive schemes by the fact that, unlike \algname{SGD}, they provably work under this weaker $(L_0, L_1)$-smoothness. Moreover, \citet{zhang2019gradient_clipping_L0L1} empirically verify that $(L_0, L_1)$-smoothness condition holds on the optimization trajectory when training modern language and image classification models. Our polynomial grow condition is weaker than $(L_0, L_1)$-smoothness as long as the gradient norm grows at most as a polynomial in $\norm{x}_2$. Unlike the approach taken in the above mentioned works, the convergence of our algorithm with the choice of DGF as in Proposition~\ref{prop:polygrow_hess} follows directly from Theorem~\ref{thm:SMD} and does not require a separate analysis.

\newpage
\section{Useful Lemma}

The following lemma is standard \citep{lu2018relatively} and the proof can be found, e.g., in \citep{chen1993convergence}.
\begin{lemma}\label{le:bregman_properties}
 \begin{enumerate}
     \item
    The Bregman divergence satisfies the three-point identity:
    $$
    D_{\omega}(x,y) + D_{\omega}(y,z) = D_{\omega}(x,z) + \langle \nabla \omega(z) - \nabla \omega(y), x - y \rangle  \quad \text{for all } \, y, z \in \cS \text{ and } x \in \text{cl}(\cS)  .
    $$ \label{le:bregman_properties_TPI}
    \item Let $\phi(\cdot)$ be a closed proper convex function on $\R^d$, $z \in \cS$ and $z^{+} := \argmin_{x\in \cX} \left\{ \phi(x) + \rho D_{\omega} (x, z) \right\} $ for $\rho>0$, then
    $$
    \phi(x) + \rho D_{\omega}(x, z) \geq \phi(z^{+}) + \rho D_{\omega}(z^{+}, z) + \rho D_{\omega}(x, z^{+} )  \quad \text{for all } x \in \text{cl}(\cS) . 
    $$\label{le:bregman_properties_optimality}
\end{enumerate}
 \end{lemma}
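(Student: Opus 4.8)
The plan is to handle the two parts separately: Part~1 (the three-point identity) is a purely algebraic identity obtained by expanding the definition of $D_{\omega}$, and Part~2 (the prox optimality inequality) combines the first-order optimality condition for $z^+$ with the identity just proved in Part~1. The two parts are logically independent in difficulty, so I would prove Part~1 first and then feed it into Part~2.

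For Part~1, I would substitute $D_{\omega}(a,b)=\omega(a)-\omega(b)-\langle\nabla\omega(b),a-b\rangle$ into the left-hand side. In the sum $D_{\omega}(x,y)+D_{\omega}(y,z)$ the two occurrences of $\omega(y)$ cancel, leaving $\omega(x)-\omega(z)$ together with $-\langle\nabla\omega(y),x-y\rangle-\langle\nabla\omega(z),y-z\rangle$. Subtracting $D_{\omega}(x,z)=\omega(x)-\omega(z)-\langle\nabla\omega(z),x-z\rangle$ removes the $\omega$ terms, and the identity $(x-z)-(y-z)=x-y$ collapses the surviving inner products into $\langle\nabla\omega(z)-\nabla\omega(y),x-y\rangle$, which is the claim. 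This uses only that $\omega$ is differentiable on $\cS$ and that $x\in\text{cl}(\cS)$ so that every divergence is well defined.

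For Part~2, I would first record existence and uniqueness of $z^+$: since $\omega$ is $1$-strongly convex, the objective $\phi(\cdot)+\rho D_{\omega}(\cdot,z)+\delta_{\cX}(\cdot)$ is strongly convex, and the minimizer remains in $\cS$ (so that $\nabla\omega(z^+)$ exists) by the barrier behaviour of the Bregman term. The first-order optimality condition for this convex problem yields $g\in\partial(\phi+\delta_{\cX})(z^+)$ with $g=-\rho(\nabla\omega(z^+)-\nabla\omega(z))$, whence convexity of $\phi+\delta_{\cX}$ gives, for every feasible $x$,
$$
\phi(x)\geq\phi(z^+)-\rho\,\langle\nabla\omega(z^+)-\nabla\omega(z),\,x-z^+\rangle .
$$
To convert the inner product into divergences I would apply Part~1 with $y$ replaced by $z^+$, obtaining $\langle\nabla\omega(z^+)-\nabla\omega(z),\,x-z^+\rangle=D_{\omega}(x,z)-D_{\omega}(x,z^+)-D_{\omega}(z^+,z)$. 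Substituting this and moving $\rho D_{\omega}(x,z)$ to the left produces exactly $\phi(x)+\rho D_{\omega}(x,z)\geq\phi(z^+)+\rho D_{\omega}(z^+,z)+\rho D_{\omega}(x,z^+)$.

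The main obstacle is not the algebra but justifying the optimality step rigorously: one must invoke a subdifferential sum rule so that the normal cone of $\cX$ is absorbed into $\partial(\phi+\delta_{\cX})(z^+)$, and one must guarantee $z^+\in\cS$ so that $\nabla\omega(z^+)$ is defined. The latter is precisely where the standing assumption $\text{ri}(\cX)\subset\cS$, together with the essential smoothness of $\omega$, is used to keep the prox point inside the zone $\cS$. A minor technical point is that the convexity bound is obtained for $x\in\cX$ (where $\delta_{\cX}(x)=0$ and $\phi$ agrees with $\phi+\delta_{\cX}$); extending it to all $x\in\text{cl}(\cS)$ is the only place where a density or limiting argument along $\text{ri}(\cX)$ may be required.
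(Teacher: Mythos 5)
Your proof is correct and is exactly the standard argument that the paper itself does not reproduce but defers to \citet{chen1993convergence} and \citet{lu2018relatively}: Part~1 by expanding the definition of $D_{\omega}$ and cancelling the $\omega$ terms, Part~2 by combining the first-order optimality condition for $z^{+}$ with the three-point identity applied at $y = z^{+}$. The one caveat, which you already flag, is that the subgradient inequality for $\phi + \delta_{\cX}$ directly yields the claim only for $x \in \cX \cap \text{cl}(\cS)$ (the only case the paper ever invokes); for $x \notin \cX$ the left-hand side would need the indicator term.
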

To establish high probability convergence, we use the technical lemma by \citet{liu2023high}. 
\begin{lemma}[Lemma 2.2. in \citep{liu2023high}]\label{le:lemma22_liu}
	Suppose $X \in \mathbb{R}^d$ such that $\mathbb{E}[X]=0$ and $\|X\|_*$ is a $\sigma$-sub-Gaussian random variable, then for any $a \in \mathbb{R}^d, 0 \leq b \leq \frac{1}{2 \sigma}$,
	$$
	\mathbb{E}\left[\exp \left(\langle a, X\rangle+b^2\|X\|_*^2\right)\right] \leq \exp \left(3\left(\|a\|^2+b^2\right) \sigma^2\right) .
	$$
\end{lemma}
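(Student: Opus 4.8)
The plan is to decouple the linear term $\langle a, X\rangle$ from the quadratic term $b^2\|X\|_*^2$ inside the exponential, estimate each separately, and recombine. The first step is to apply the Cauchy--Schwarz inequality to the expectation of the product of exponentials:
\[
\mathbb{E}\!\left[\exp\!\left(\langle a, X\rangle + b^2\|X\|_*^2\right)\right] \le \mathbb{E}\!\left[\exp\!\left(2\langle a, X\rangle\right)\right]^{1/2}\, \mathbb{E}\!\left[\exp\!\left(2b^2\|X\|_*^2\right)\right]^{1/2}.
\]
This reduces the claim to two independent estimates, one purely quadratic and one purely linear.

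For the quadratic factor I would invoke the sub-Gaussian hypothesis directly. Setting $\lambda^2 = 2b^2$, the constraint $b \le 1/(2\sigma)$ guarantees $\lambda^2 = 2b^2 \le 1/(2\sigma^2) \le 1/\sigma^2$, so $|\lambda| \le 1/\sigma$ and the defining inequality of sub-Gaussianity gives $\mathbb{E}[\exp(2b^2\|X\|_*^2)] \le \exp(2b^2\sigma^2)$; taking the square root contributes a factor $\exp(b^2\sigma^2)$.

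The linear factor is where the \emph{mean-zero} property of $X$ is essential. I would first note that $|\langle a, X\rangle| \le \|a\|\,\|X\|_*$ by duality of the norm, so $\langle a, X\rangle$ is a centered scalar random variable dominated by $\|a\|$ times a $\sigma$-sub-Gaussian variable, and is therefore itself sub-Gaussian with variance proxy of order $\|a\|^2\sigma^2$. To turn this into a genuine Laplace-transform bound I would expand $\mathbb{E}[\exp(2\langle a, X\rangle)] = 1 + \sum_{k\ge 2} \frac{2^k \mathbb{E}[\langle a, X\rangle^k]}{k!}$, where the $k=1$ term vanishes precisely because $\mathbb{E}[X]=0$. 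Bounding $\mathbb{E}[|\langle a, X\rangle|^k] \le \|a\|^k \mathbb{E}[\|X\|_*^k]$ and feeding in the moment bounds implied by the squared-exponential sub-Gaussian condition, the series sums to $\exp(c\,\|a\|^2\sigma^2)$ for an absolute constant $c$, whose square root is $\exp(\tfrac{c}{2}\|a\|^2\sigma^2)$.

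Multiplying the two factors yields $\exp(\tfrac{c}{2}\|a\|^2\sigma^2 + b^2\sigma^2)$, and the stated constant $3$ simply absorbs the Cauchy--Schwarz factor of two together with the sub-Gaussian moment constant $c$. The \textbf{main obstacle} is this last passage: converting the $\psi_2$/squared-exponential characterization of $\|X\|_*$ into a clean, additive-constant-free MGF bound for the centered inner product $\langle a, X\rangle$. It is standard but delicate, because a naive Young's-inequality split of $\langle a, X\rangle$ into a deterministic term plus a multiple of $\|X\|_*^2$ leaves an irreducible $\exp(\Theta(1))$ factor, visible already when $b=0$ and $\|a\|\to 0$; only by exploiting centering to eliminate the first-order term does one recover the correct $\exp(O(\|a\|^2\sigma^2))$ scaling that tends to $1$ as $\|a\|\to 0$.
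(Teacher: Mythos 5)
The paper does not prove this lemma at all --- it is imported verbatim from \citet{liu2023high} as a cited black box --- so there is no internal proof to compare against; I can only assess your argument on its own terms. Your decomposition is sound: the Cauchy--Schwarz split is valid, and the quadratic factor is handled correctly (with $\lambda^2 = 2b^2 \leq 1/(2\sigma^2)$ the hypothesis applies directly and yields the factor $\exp(b^2\sigma^2)$ after taking the square root). The duality bound $|\langle a, X\rangle| \leq \|a\|\,\|X\|_*$ and the observation that centering is what kills the first-order term are also exactly right.

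The soft spot is the one you yourself flag: you defer the uniform-in-$a$ MGF bound for the centered linear term to a moment-series computation and then assert that the constant $3$ ``absorbs'' everything. With the standard moment bounds $\mathbb{E}[\|X\|_*^k] \leq e\,\sigma^k\,\Gamma(k/2+1)$ that your hypothesis delivers, the series route gives roughly $\mathbb{E}[\exp(2\langle a,X\rangle)] \leq \exp(c\,\|a\|^2\sigma^2)$ with $c$ noticeably larger than $6$, so after the square root you land above $\exp(3\|a\|^2\sigma^2)$ and the stated constant is not recovered. The gap closes cleanly if you replace the series by a two-regime argument. When $2\|a\| \leq 1/\sigma$, use the pointwise inequality $e^{z} \leq z + e^{z^2}$ (valid for all real $z$) with $z = 2\langle a, X\rangle$; centering annihilates the linear term and the hypothesis with $\lambda^2 = 4\|a\|^2 \leq 1/\sigma^2$ gives $\mathbb{E}[\exp(2\langle a,X\rangle)] \leq \exp(4\|a\|^2\sigma^2)$. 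When $2\|a\| > 1/\sigma$, use Young's inequality $2\|a\|\,\|X\|_* \leq 2\|a\|^2\sigma^2 + \|X\|_*^2/(2\sigma^2)$ and bound $\mathbb{E}[\exp(\|X\|_*^2/(2\sigma^2))] \leq e^{1/2}$; the stray $e^{1/2}$ is absorbed because $2\|a\|^2\sigma^2 > 1/2$ in this regime, giving the same bound $\exp(4\|a\|^2\sigma^2)$. Taking the square root yields $\exp(2\|a\|^2\sigma^2)$, and combined with the quadratic factor the total is $\exp\bigl((2\|a\|^2 + b^2)\sigma^2\bigr) \leq \exp\bigl(3(\|a\|^2+b^2)\sigma^2\bigr)$, with room to spare. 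So your architecture is correct and completable; only the final constant-tracking step needs to be replaced by this sharper (and in fact simpler) case analysis.
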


 The following lemma shows the connection between $\Phi_{1/\rho}$ and $\Phi$. Similar result in the Euclidean setting has previously appeared, e.g., in \citep{stella2017forward}.
 \begin{lemma}\label{le:MEnv_Phi_connection}
 	Let $F(\cdot)$ be $(\ell , \omega)$-smooth. Then for any $\rho \geq 2 \ell$ and $x\in \cX \cap \cS$ we have $\Phi_{1/\rho}(x) \geq \Phi(x^+)$, where $ x^+ := \argmin_{y \in \cX} \, \langle \nabla F(x), y \rangle + r(y) + (\rho - \ell ) D_{\omega}(y, x) . $
 \end{lemma}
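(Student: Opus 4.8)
The plan is to bound $\Phi(x^+)$ from above by the two ingredients that define $\Phi_{1/\rho}(x)$, namely the prox point $\hat x := \operatorname{prox}_{\Phi/\rho}(x)$ and the penalty $\rho D_{\omega}(\hat x, x)$. The route is: linearize $F$ around $x$, then use the optimality of $x^+$ with the well-chosen test point $y = \hat x$, and finally re-expand $F$ around $\hat x$. The condition $\rho \geq 2\ell$ is exactly what lets me discard two nonpositive remainder terms at the end.

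First I would apply the upper bound in Assumption~\ref{ass:rel_smooth} to the pair $(x^+, x)$ to obtain
$$\Phi(x^+) = F(x^+) + r(x^+) \leq F(x) + \langle \nabla F(x), x^+ - x\rangle + \ell D_{\omega}(x^+, x) + r(x^+),$$
which replaces $F(x^+)$ by its linear model at $x$ plus a Bregman penalty, so the right-hand side now involves only quantities that the definition of $x^+$ controls.

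Second, I would invoke the prox-optimality inequality Lemma~\ref{le:bregman_properties}-\ref{le:bregman_properties_optimality}, applied to the convex function $\phi(y) := \langle \nabla F(x), y\rangle + r(y)$ with anchor $z = x$, minimizer $z^+ = x^+$, and parameter $\rho - \ell$, evaluated at $y = \hat x$. This gives
$$\langle \nabla F(x), x^+\rangle + r(x^+) \leq \langle \nabla F(x), \hat x\rangle + r(\hat x) + (\rho - \ell) D_{\omega}(\hat x, x) - (\rho - \ell) D_{\omega}(x^+, x) - (\rho - \ell) D_{\omega}(\hat x, x^+).$$
Substituting this into the previous display eliminates the bracketed term; the coefficient in front of $D_{\omega}(x^+, x)$ then becomes $\ell - (\rho - \ell) = 2\ell - \rho \leq 0$, and the term $-(\rho - \ell) D_{\omega}(\hat x, x^+)$ is also nonpositive, so both may be dropped since $\rho \geq 2\ell$ and $D_{\omega} \geq 0$. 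What remains is
$$\Phi(x^+) \leq F(x) + \langle \nabla F(x), \hat x - x\rangle + r(\hat x) + (\rho - \ell) D_{\omega}(\hat x, x).$$

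Finally I would apply the lower bound in Assumption~\ref{ass:rel_smooth} to the pair $(\hat x, x)$, namely $F(x) + \langle \nabla F(x), \hat x - x\rangle \leq F(\hat x) + \ell D_{\omega}(\hat x, x)$, and substitute it above; the two penalties combine as $\ell D_{\omega}(\hat x, x) + (\rho - \ell) D_{\omega}(\hat x, x) = \rho D_{\omega}(\hat x, x)$, leaving $\Phi(x^+) \leq \Phi(\hat x) + \rho D_{\omega}(\hat x, x) = \Phi_{1/\rho}(x)$, which is the claim. The step I expect to be the main obstacle is the bookkeeping of the Bregman coefficients: one must verify that the test point $y = \hat x$ in the prox inequality produces exactly $\rho D_{\omega}(\hat x, x)$ after recombination, and that the leftover $(2\ell - \rho) D_{\omega}(x^+, x)$ and $-(\rho - \ell) D_{\omega}(\hat x, x^+)$ terms are rendered harmless precisely by the threshold $\rho \geq 2\ell$.
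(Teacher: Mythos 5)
Your proof is correct, but it is organized differently from the paper's. The paper's argument runs in the opposite direction and never touches $\hat x$ or the three-point inequality: it uses the lower bound in Assumption~\ref{ass:rel_smooth} to show that $\Phi(y) + \rho D_{\omega}(y,x) \geq F(x) + \langle \nabla F(x), y - x\rangle + r(y) + (\rho-\ell) D_{\omega}(y,x)$ pointwise in $y$, minimizes both sides over $y$ (the left minimum is $\Phi_{1/\rho}(x)$, the right minimum is attained at $x^+$ by definition), and then applies the upper bound of relative smoothness at $x^+$ to conclude $\Phi_{1/\rho}(x) \geq \Phi(x^+) + (\rho - 2\ell) D_{\omega}(x^+,x) \geq \Phi(x^+)$. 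You instead work forward from $\Phi(x^+)$, pass through the explicit prox point $\hat x$ via Lemma~\ref{le:bregman_properties}-\ref{le:bregman_properties_optimality}, and close the loop with the lower smoothness bound at $\hat x$. Both proofs consume exactly the same ingredients (both halves of relative smoothness plus the optimality of $x^+$, with $\rho \geq 2\ell$ entering as the sign condition on the leftover $D_{\omega}(x^+,x)$ term), and your coefficient bookkeeping checks out; the paper's version is simply shorter because the ``minimize both sides'' step replaces your three-point inequality and makes $\hat x$ unnecessary. One minor technicality common to both arguments: applying Assumption~\ref{ass:rel_smooth} at $\hat x$ (or $x^+$) tacitly assumes these points lie in $\cX \cap \cS$, which the paper also does not belabor.
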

\begin{proof}
	By Assumption~\ref{ass:rel_smooth} (lower bound), we have for any $x,y \in \cX \cap \cS$
	$$
	\Phi(y) + \rho D_{\omega}(y, x) \geq  F(x) + \langle \nabla F(x), y - x\rangle + r(y) + (\rho - \ell ) D_{\omega}(y , x ) .
	$$
	Minimizing both sides over $y \in \cX \cap \cS$, we have 
	\begin{eqnarray*}
	\Phi_{1/\rho}(x) &\geq& F(x) + \langle \nabla F(x), x^+ - x\rangle + r(x^+) + (\rho - \ell ) D_{\omega}(x^+, x ) \\
	&\geq& F(x^+) + r(x^+) + (\rho - 2 \ell ) D_{\omega}(x^+, x )  \geq \Phi(x^+)  ,
		\end{eqnarray*}
		where the first equality holds by the definitions of $\Phi_{1/\rho}$ and $x^+$. The second inequality uses Assumption~\ref{ass:rel_smooth} (upper bound). 
\end{proof}
The following lemma shows that our Assumption~\ref{ass:KL} is more general than relative strong convexity \citep{lu2018relatively}. In the Euclidean case, the same result was derived by \citet{Karimi_PL}.
\begin{lemma}[Relative strong convexity implies $2$-Bregman Prox-P{\L}]\label{le:rel_SC_Prox_PL}
	Let $F(\cdot)$ be $\mu$-relatively strongly convex w.r.t. $\omega(\cdot)$, i.e., for all $x, y \in \cX \cap \cS$
	\begin{eqnarray}\label{eq:rel_SC}
	F(y) \geq F(x) + \langle \nabla F(x), y - x \rangle +  \mu D_{\omega}(y,x) .
	\end{eqnarray}
	Then Assumption~\ref{ass:KL} holds wth $\alpha = 2$ and any $\rho \geq \mu$, i.e.,  $ \cD_{\rho}(x)  \geq  2 \mu \rb{ \Phi(x) - \Phi^* }.$
\end{lemma}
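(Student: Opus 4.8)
The plan is to read relative strong convexity \eqref{eq:rel_SC} as a \emph{global} Bregman lower bound on $\Phi=F+r$ and to observe that minimizing this lower bound over $\cX$ reproduces exactly the linearized-plus-penalized model defining $\cD_{\rho}(x)$. Since $r(\cdot)$ is convex, adding it to \eqref{eq:rel_SC} shows that $\Phi$ is itself $\mu$-relatively strongly convex: for every $x\in\cX\cap\cS$ there is $g_x\in\partial(\Phi+\delta_{\cX})(x)$ with $\Phi(y)\ge\Phi(x)+\langle g_x,y-x\rangle+\mu D_{\omega}(y,x)$ for all $y\in\cX$. The overall strategy is to reduce $\cD_{\rho}(x)$ to a Bregman--Moreau-envelope gap and then to contract that gap.

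For the reduction, I rewrite \eqref{eq:rel_SC} at the base point $x$ as $\langle\nabla F(x),y-x\rangle\le F(y)-F(x)-\mu D_{\omega}(y,x)$, which upper bounds the model pointwise by $Q_{\rho}(x,y)\le \Phi(y)-\Phi(x)+(\rho-\mu)D_{\omega}(y,x)$, using $Q_{\rho}(x,y)=\langle\nabla F(x),y-x\rangle+\rho D_{\omega}(y,x)+r(y)-r(x)$. Minimizing over $y\in\cX$ and recalling $\cD_{\rho}(x)=-2\rho\min_{y}Q_{\rho}(x,y)$ and the envelope $\Phi_{1/(\rho-\mu)}(x)=\min_{y\in\cX}[\Phi(y)+(\rho-\mu)D_{\omega}(y,x)]$ yields the key inequality $\tfrac{1}{2\rho}\cD_{\rho}(x)\ge \Phi(x)-\Phi_{1/(\rho-\mu)}(x)$, valid for every $\rho\ge\mu$. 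In the boundary case $\rho=\mu$ the penalty parameter $\rho-\mu$ vanishes, so $\Phi_{1/0}(x)=\min_{y}\Phi(y)=\Phi^*$ and the claim $\cD_{\mu}(x)\ge 2\mu(\Phi(x)-\Phi^*)$ follows immediately; the quadratic instance of \Cref{le:FB_env} shows this is tight.

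To cover an arbitrary $\rho>\mu$ it remains to prove the envelope-contraction inequality $\Phi(x)-\Phi_{1/(\rho-\mu)}(x)\ge\tfrac{\mu}{\rho}(\Phi(x)-\Phi^*)$, whose sharp constant $\mu/(\mu+\nu)$ with $\nu:=\rho-\mu$ is familiar from the Euclidean strongly convex setting. I would set $\tilde y:=\operatorname{prox}_{\Phi/\nu}(x)$, use its optimality condition $\nu(\nabla\omega(x)-\nabla\omega(\tilde y))\in\partial(\Phi+\delta_{\cX})(\tilde y)$, and apply $\mu$-relative strong convexity of $\Phi$ at $\tilde y$ toward $x$. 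The three-point identity (\Cref{le:bregman_properties}, part~\ref{le:bregman_properties_TPI}) then collapses the resulting inner product into $D_{\omega}^{\text{sym}}(x,\tilde y)$ and produces the clean lower bound $\Phi(x)-\Phi_{1/\nu}(x)\ge\rho\,D_{\omega}(x,\tilde y)$. It then remains only to control the residual $\Phi(\tilde y)-\Phi^*$ by the same displacement, for which I would expand \eqref{eq:rel_SC} a second time at $\tilde y$ toward a minimizer $x^*$ (or an infimizing sequence) of $\Phi$ and invoke the first-order optimality $0\in\partial(\Phi+\delta_{\cX})(x^*)$.

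The main obstacle is exactly the asymmetry of the Bregman divergence. In the Euclidean case the symmetry $D_{\omega}(x,\tilde y)=D_{\omega}(\tilde y,x)$ lets one merge the positive curvature term $\mu D_{\omega}(x,\tilde y)$ from strong convexity with the negative envelope penalty $-\nu D_{\omega}(\tilde y,x)$ and optimize a single scalar quadratic in an interpolation parameter; here the two divergences are genuinely different objects, and the naive convex-combination test point $y_{\theta}=(1-\theta)x+\theta x^*$ produces only terms linear in $\theta$, which cannot be traded off to recover the $\mu/\rho$ factor. The residual estimate must therefore be routed through the three-point identity and the prox optimality condition rather than any triangle inequality, and I expect this residual control — not the clean reduction above — to be the delicate step; one must also carry the constraint set $\cX$ faithfully through every subgradient via the normal cone and the variational inequality satisfied by $x^*$.
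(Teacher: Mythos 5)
The first half of your argument is, for the boundary case $\rho=\mu$, exactly the paper's proof: adding $r(y)-r(x)$ to \eqref{eq:rel_SC} gives $\Phi(y)\ge \Phi(x)+Q_{\mu}(x,y)$, and minimizing over $y\in\cX$ yields $\Phi^*\ge \Phi(x)-\frac{1}{2\mu}\cD_{\mu}(x)$, i.e. $\cD_{\mu}(x)\ge 2\mu(\Phi(x)-\Phi^*)$. The two arguments part ways on how to pass to an arbitrary $\rho>\mu$, and this is where your proposal has a genuine gap. You reduce the claim to the envelope-contraction inequality $\Phi(x)-\Phi_{1/(\rho-\mu)}(x)\ge \frac{\mu}{\rho}\,(\Phi(x)-\Phi^*)$, which carries the sharp Euclidean constant (so there is no slack to give away), and you do not prove it. Your sketch correctly reaches $\Phi(x)-\Phi_{1/\nu}(x)\ge \rho\, D_{\omega}(x,\tilde y)$ with $\nu=\rho-\mu$ and $\tilde y=\operatorname{prox}_{\Phi/\nu}(x)$, but to conclude you would then have to dominate the residual $\Phi(\tilde y)-\Phi^*+\nu D_{\omega}(\tilde y,x)$ by $\frac{\rho-\mu}{\mu}\cdot\rho\, D_{\omega}(x,\tilde y)$. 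The two Bregman arguments enter in opposite order, there is no general comparison between $D_{\omega}(\tilde y,x)$ and $D_{\omega}(x,\tilde y)$, and expanding \eqref{eq:rel_SC} at $\tilde y$ toward $x^*$ via the three-point identity leaves the uncontrolled term $(\nu-\mu)D_{\omega}(x^*,\tilde y)$, which has the wrong sign whenever $\nu>\mu$. You flag this step yourself as delicate; as written, the proof does not close for $\rho>\mu$.

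The paper sidesteps the contraction inequality entirely: having established the bound at $\rho=\mu$, it finishes by observing that $\cD_{\mu}(x)\le \cD_{\rho}(x)$ for $\rho\ge\mu$. That is the missing ingredient in your route, and it is much cheaper than what you attempt. Be aware, though, that this monotonicity is itself not completely free for a general DGF: writing $\frac{1}{2}\cD_{\rho}(x)=\max_{y\in\cX}\left(\rho\,h(y)-\rho^2 D_{\omega}(y,x)\right)$ with $h(y):=\langle\nabla F(x),x-y\rangle+r(x)-r(y)$, the Euclidean case follows by testing at the rescaled point $x+\tfrac{\mu}{\rho}(y_{\mu}-x)$, using $D_{\omega}(x+\theta(y-x),x)=\theta^2 D_{\omega}(y,x)$; for a nonsmooth $\omega$ one only gets $D_{\omega}(x+\theta(y-x),x)\le\theta\, D_{\omega}(y,x)$ from convexity, which is not enough. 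So if you adopt the paper's shortcut, you should still supply (or carefully justify) the monotonicity of $\rho\mapsto\cD_{\rho}(x)$ rather than treat it as immediate.
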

\begin{proof}
	Adding $r(y) $ to both sides of \eqref{eq:rel_SC}, we have 
	$$
	\Phi(y) \geq \Phi(x) + \langle \nabla F(x), y - x \rangle +  \mu D_{\omega}(y,x)  + r(y) - r(x)  = \Phi(x) + Q_{\mu}(x, y) .
	$$
	Minimizing both sides over $y\in \cX \cap \cS$, we get
	$$
	\Phi^* \geq \Phi(x) + \min_{y\in \cX} Q_{\mu} (x, y) = \Phi(x) - \frac{1}{2 \mu}  \cD_{\mu}(x) . 
	$$
	Rearranging and noticing that $\cD_{\mu}(x) \leq \cD_{\rho}(x)$ for any $x\in \cX$ and $\rho \geq \mu$, we obtain the result. 
\end{proof}

The following lemma connects the Frank-Wolfe gap with the norm of the gradient mapping in the Euclidean case. 

\begin{lemma}[Lemma 2.2 in \citep{balasubramanian2022zeroth}]\label{le:FWGap_BGM}
	Let $\omega(x) := \frac{1}{2}\sqnorm{x}_2$, $\cX$ be a compact set with diameter $D_{\cX, \norm{\cdot}_2 } := \max_{x, y\in \cX} \norm{x - y}_2$ and $r(\cdot) = 0$. Then for any $\rho > 0 $ 
	$$
	\max_{y \in \cX} \langle \nabla F(x) , x - y \rangle \leq \rb{ D_{\cX, \norm{\cdot}_2} + \rho^{-1} G_{F, \norm{\cdot}_{2}} } \sqrt{\Delta_{\rho}^+(x)} ,
	$$
	where $G_{F, \norm{\cdot}_{2}}  := \max_{x\in \cX} \norm{\nabla F(x)}_2$.
\end{lemma}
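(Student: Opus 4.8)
The plan is to specialize every object to the Euclidean setting $\omega(x) = \frac{1}{2}\norm{x}_2^2$, $r(\cdot) = 0$, in which $D_\omega(x,y) = \frac{1}{2}\norm{x-y}_2^2$, so that the gradient-mapping point reads $x^+ = \argmin_{y\in\cX}\langle\nabla F(x), y\rangle + \frac{\rho}{2}\norm{y-x}_2^2$ and the BGM becomes $\Delta_{\rho}^+(x) = \rho^2\norm{x^+-x}_2^2$, i.e.\ $\sqrt{\Delta_{\rho}^+(x)} = \rho\norm{x^+-x}_2$. Let $y^\star \in \argmin_{y\in\cX}\langle\nabla F(x), y\rangle$ be the linear-minimization-oracle point attaining the Frank--Wolfe gap, so that the gap equals $\langle\nabla F(x), x - y^\star\rangle$. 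The central idea is to route this gap through $x^+$ by writing
$$
\langle\nabla F(x), x - y^\star\rangle = \langle\nabla F(x), x - x^+\rangle + \langle\nabla F(x), x^+ - y^\star\rangle ,
$$
and to bound the two pieces separately.

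The first term I would bound directly by Cauchy--Schwarz, $\langle\nabla F(x), x - x^+\rangle \leq \norm{\nabla F(x)}_2\,\norm{x - x^+}_2 \leq G_{F,\norm{\cdot}_2}\,\norm{x^+ - x}_2$. For the second term I would use the first-order optimality (variational inequality) characterizing $x^+$: since $x^+$ minimizes a strongly convex function over the convex set $\cX$, for every $y\in\cX$ one has $\langle\nabla F(x) + \rho(x^+ - x),\, y - x^+\rangle \geq 0$. Applying this with $y = y^\star$ and rearranging yields $\langle\nabla F(x), x^+ - y^\star\rangle \leq \rho\langle x^+ - x,\, y^\star - x^+\rangle$, and then Cauchy--Schwarz together with $y^\star, x^+ \in \cX$ gives $\langle\nabla F(x), x^+ - y^\star\rangle \leq \rho\,\norm{x^+ - x}_2\,\norm{y^\star - x^+}_2 \leq \rho\, D_{\cX,\norm{\cdot}_2}\,\norm{x^+ - x}_2$.

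Summing the two bounds gives $\langle\nabla F(x), x - y^\star\rangle \leq \left(G_{F,\norm{\cdot}_2} + \rho\, D_{\cX,\norm{\cdot}_2}\right)\norm{x^+ - x}_2$, and substituting $\norm{x^+ - x}_2 = \rho^{-1}\sqrt{\Delta_{\rho}^+(x)}$ produces exactly $\left(D_{\cX,\norm{\cdot}_2} + \rho^{-1}G_{F,\norm{\cdot}_2}\right)\sqrt{\Delta_{\rho}^+(x)}$, as claimed. I do not expect a genuine obstacle here: the only step that requires a moment of care is recognizing that the gap should be split through $x^+$ and that the cross term must be controlled by the proximal variational inequality rather than by any smoothness of $F(\cdot)$; once this decomposition is in place, everything reduces to Cauchy--Schwarz and the diameter bound.
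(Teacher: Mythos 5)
Your proof is correct: in the Euclidean case $\sqrt{\Delta_{\rho}^+(x)}=\rho\norm{x^+-x}_2$, and splitting the Frank--Wolfe gap through $x^+$, bounding $\langle\nabla F(x),x-x^+\rangle$ by Cauchy--Schwarz and $\langle\nabla F(x),x^+-y^\star\rangle$ via the variational inequality $\langle\nabla F(x)+\rho(x^+-x),y-x^+\rangle\geq 0$ together with the diameter bound, yields exactly the claimed constant $(D_{\cX,\norm{\cdot}_2}+\rho^{-1}G_{F,\norm{\cdot}_2})$. The paper itself does not prove this lemma --- it is stated only with a citation to Lemma 2.2 of \citet{balasubramanian2022zeroth} --- so there is nothing internal to compare against; your argument is the standard one underlying that reference and is a valid self-contained derivation.
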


We report the special case of Lemma 3 by \citet{stich2019unified}. 
\begin{lemma}[Lemma 3 in \citep{stich2019unified}]\label{le:L3_noise_adaptive_rate}
Let $\cb{r_t}_{t\geq0}$ and $\cb{\stepsize_{t}}_{t\geq0}$ be two non-negative sequences with $\stepsize_{t} \leq \frac{1}{d}$ that satisfy the relation 	
$$
r_{t+1} \leq (1 - a \stepsize_t) \, r_t + c \, \stepsize_{t}^2 ,
$$
where $a > 0$, $c \geq 0$. For any $T\geq0$, set
$$
\stepsize_{t} = \begin{cases}
	\frac{1}{d}  & \text{if } t < \lceil T/2 \rceil \text{ and } T \leq \frac{2 d }{a } , \\
	\frac{1}{a \rb{ \frac{2 d}{a} + t - \lceil T/2 \rceil  }}   & \text{otherwise.} 
\end{cases}
$$
Then we have 
$$
 r_{t+1} \leq \frac{32 \,  d \, r_0 }{a} \exp\rb{- \frac{a T}{2 d}} + \frac{36 \, c}{a^2 T} . 
$$
\end{lemma}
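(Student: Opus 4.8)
This is a self-contained deterministic recursion lemma, so my plan is to analyze the two phases of the step-size schedule separately and then glue them. Set $t_0 := \lceil T/2\rceil$ and $\kappa := 2d/a$; throughout I first record that $\eta_t \le 1/d \le 1/a$ guarantees $1 - a\eta_t \in [0,1)$, so every contraction factor is a genuine (nonnegative) contraction and all weighted inequalities below are sign-correct. The warm-up phase uses the constant step $\eta_t = 1/d$, where the recursion reduces to $r_{t+1} \le (1 - a/d) r_t + c/d^2$. Unrolling this geometric recursion and using $(1 - a/d)^k \le \exp(-ak/d)$ together with $\sum_{j\ge 0}(1-a/d)^j = d/a$ gives
$$r_{t_0} \le \exp\!\Big(-\frac{a t_0}{d}\Big) r_0 + \frac{c}{ad} \le \exp\!\Big(-\frac{aT}{2d}\Big) r_0 + \frac{c}{ad},$$
which already isolates the exponentially decaying contribution of $r_0$ and shows the noise saturates at the $O(c/(ad))$ level by the end of the warm-up.

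The decreasing-step phase is where the $O(c/(a^2T))$ term has to be produced \emph{without} a spurious logarithmic factor, and this is the crux. Reindexing by $s = t - t_0 \ge 0$ and writing $\tau_s := \kappa + s$ so that $a\eta_t = 1/\tau_s$ and $\tau_{s+1} = \tau_s + 1$, the recursion becomes $r_{s+1} \le \frac{\tau_s - 1}{\tau_s} r_s + \frac{c}{a^2 \tau_s^2}$. The key device is to multiply through by the quadratic weight $\tau_s^2$ and exploit the elementary bound $\tau_s(\tau_s - 1) \le (\tau_s - 1)^2 + \tau_s$, and more sharply the cancellation $\tau_s(\tau_s - 2) = (\tau_s-1)^2 - 1 \le (\tau_s - 1)^2$ available for the factor-two variant of the schedule, which collapses the weighted recursion to the clean telescoping form $v_{s+1} \le v_s + O(c/a^2)$ with $v_s := (\tau_s - 1)^2 r_s$. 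Summing over the $\lfloor T/2\rfloor$ steps of the phase yields $v_S \le v_0 + O(cS/a^2)$, and dividing by $(\tau_S - 1)^2 = \Theta((\kappa + T)^2)$ gives
$$r_{T+1} \;\le\; O\!\Big(\frac{(\kappa - 1)^2\, r_{t_0}}{(\kappa + T)^2}\Big) + O\!\Big(\frac{c}{a^2(\kappa + T)}\Big).$$
It is precisely the quadratic weight that removes the $\log T$ factor that a naive linear-weight telescoping of $\tfrac{\tau_s-1}{\tau_s}$ would introduce.

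It then remains to combine the phases and split on the two regimes. When $T > 2d/a$, the warm-up contracts $r_0$ strongly, $\exp(-aT/(2d)) < e^{-1}$, and feeding the bound on $r_{t_0}$ into the first displayed term produces the $\frac{32\, d\, r_0}{a}\exp(-aT/(2d))$ contribution, the prefactor $d/a$ and the absolute constant arising from bounding $(\kappa - 1)^2/(\kappa + T)^2$ together with $\kappa = 2d/a$; the second term gives the $\frac{36 c}{a^2 T}$ noise floor after lower-bounding $\kappa + T$ by an absolute-constant multiple of $T$. When instead $T \le 2d/a$, the exponential factor is only $\Theta(1)$ but the warm-up noise floor is already small enough, since $T \le \kappa$ forces $\frac{c}{ad} = \frac{2c}{a^2\kappa} \le \frac{2c}{a^2 T}$, so the constant-step estimate alone delivers both claimed terms, and the residual summation that would otherwise carry a logarithm is trivially bounded because $(\kappa + S)/\kappa \le 3/2$ throughout the phase.

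The one genuinely load-bearing idea is the quadratic-weight telescoping identity that renders the decreasing-step recursion log-free; everything else is bookkeeping. The main obstacle I anticipate is therefore not conceptual but the careful tracking needed to land on the stated constants $32$ and $36$: one must handle the ceilings $\lceil T/2\rceil$, verify $\eta_t \le 1/d$ at the junction $s = 0$ (where $\eta_{t_0} = 1/(a\kappa) = 1/(2d)$), and reconcile the two regimes so that a single inequality covers both. These steps are routine once the telescoping is set up, but they are exactly where the precise numerical constants are won or lost.
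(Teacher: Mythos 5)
Your proposal is correct in substance and takes essentially the same route as the source: the paper supplies no proof of this lemma at all, importing it from \citet{stich2019unified}, and Stich's proof is precisely your two-phase argument --- geometric contraction under the constant step giving $r_{t_0} \le \exp(-a t_0/d)\, r_0 + c/(ad)$, then quadratic-weight telescoping with $v_s = (\tau_s-1)^2 r_s$ (in your notation $\tau_s = \kappa + s$), then a case split on whether $T$ exceeds the warm-up horizon $2d/a$.

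One point you only half-resolved deserves to be made explicit, because it is a transcription error in the statement itself rather than a defect of your reasoning. The clean telescoping genuinely requires Stich's factor-two schedule $\eta_t = \frac{2}{a(\kappa + t - t_0)}$: only then does $\tau_s(\tau_s-2) \le (\tau_s-1)^2$ close the weighted recursion. With numerator $1$ as displayed in the lemma, your identity $\tau_s(\tau_s-1) = (\tau_s-1)^2 + (\tau_s-1)$ leaves the non-telescoping remainder $(\tau_s-1)\,r_s$, and this is not fixable bookkeeping: solving the phase-two recursion exactly (with equality and zero error at the start of the phase) gives
\[
r_S \;=\; \frac{c}{a^2\,\tau_{S-1}} \sum_{s=0}^{S-1} \frac{1}{\tau_s} \;=\; \Theta\!\left(\frac{c\,\log\left(1+T/\kappa\right)}{a^2\, T}\right)
\]
for $T \gg \kappa$, so the log-free bound $36c/(a^2 T)$ is unattainable under the literal schedule. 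Relatedly, the case condition as written is garbled: for $T > 2d/a$ the \emph{otherwise} branch applies to every $t$, and $\kappa + t - t_0 \le 0$ whenever $t \le t_0 - \kappa$, so the stepsizes are not even well defined; the intended reading (as in Stich's Lemma 3) is the constant step $1/d$ for all $t < t_0$ regardless of $T$, followed by the decreasing factor-two step. Read against that intended statement, your proof is sound --- including your observation that in the regime $T \le 2d/a$ the would-be logarithm is an absolute constant since $\tau_s \le \tfrac{3}{2}\kappa$ throughout the phase --- but you should say plainly that you are proving the corrected schedule, not the one displayed, rather than invoking the factor-two cancellation while nominally working with the factor-one step.
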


The next lemma is standard and the proof can be found, e.g., in \citep{van2014probability}. 
\begin{lemma}[Maximal tail inequality, Lemma 5.1 and 5.2 in \citep{van2014probability}]\label{le:max_tail}
	Let $\xi_i$ be a $\sigma$-sub-Gaussian random variable for every $i = 1, \ldots, n$. Then 
	$$
	\rb{\Exp{\max_{1\leq i \leq n} \xi_i} }^2 \leq \Exp{\max_{1\leq i \leq n} \xi_i^2} \leq 2 \sigma^2 \log(n) , 
	$$
	$$
	\opn{Pr}\rb{\max_{1\leq i \leq n} \xi_i \geq \sqrt{2 \sigma^2 \log(n)}  + \lambda } \leq e^{-\frac{\lambda^2}{2 \sigma^2} } \qquad \text{for all } \lambda \geq 0 . 
	$$
\end{lemma}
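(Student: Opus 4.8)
The plan is to prove the three claims separately, all by elementary convexity together with the Chernoff (exponential-moment) method combined with a union bound. Throughout I would work with the paper's sub-Gaussian convention, i.e.\ $\Exp{\exp(\lambda^2 \xi_i^2)} \le \exp(\lambda^2 \sigma^2)$ whenever $|\lambda| \le 1/\sigma$; writing $s := \lambda^2$ this reads $\Exp{\exp(s \xi_i^2)} \le \exp(s \sigma^2)$ for every $s \in (0, 1/\sigma^2]$, which is the only analytic input I need.

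First I would dispose of the leftmost inequality $\rb{\Exp{\max_i \xi_i}}^2 \le \Exp{\max_i \xi_i^2}$. Pointwise, if $j^\ast$ is an index attaining the maximum then $\rb{\max_i \xi_i}^2 = \xi_{j^\ast}^2 \le \max_i \xi_i^2$, and Jensen's inequality applied to the convex map $u \mapsto u^2$ gives $\rb{\Exp{\max_i \xi_i}}^2 \le \Exp{\rb{\max_i \xi_i}^2} \le \Exp{\max_i \xi_i^2}$. This step needs nothing beyond monotonicity and convexity.

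The central bound $\Exp{\max_i \xi_i^2} \le 2\sigma^2 \log n$ is the crux. Since $\exp(s\,\cdot)$ is increasing for $s>0$, I would write $\exp\rb{s \max_i \xi_i^2} = \max_i \exp(s \xi_i^2) \le \sum_{i=1}^n \exp(s\xi_i^2)$, take expectations, and apply the sub-Gaussian moment bound termwise to obtain $\Exp{\exp\rb{s \max_i \xi_i^2}} \le n \exp(s\sigma^2)$. Applying Jensen to $\exp$ on the left gives $\exp\rb{s\,\Exp{\max_i \xi_i^2}} \le n \exp(s\sigma^2)$, hence $\Exp{\max_i \xi_i^2} \le \sigma^2 + (\log n)/s$. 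Optimizing over the admissible range by taking the largest allowed value $s = 1/\sigma^2$ yields $\Exp{\max_i \xi_i^2} \le \sigma^2(1 + \log n) \le 2\sigma^2 \log n$ for $n \ge 3$, which is the regime relevant to the applications (where $n = 2d$ with $d$ the ambient dimension). For the tail bound I would first establish the per-coordinate estimate $\opn{Pr}(\xi_i \ge u) \le e^{-u^2/(2\sigma^2)}$ for $u \ge 0$ directly from the sub-Gaussian property via Markov applied to $e^{s \xi_i^2}$, then union bound: $\opn{Pr}\rb{\max_i \xi_i \ge u} \le \sum_{i=1}^n \opn{Pr}(\xi_i \ge u) \le n\, e^{-u^2/(2\sigma^2)}$. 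Substituting $u = \sqrt{2\sigma^2 \log n} + \lambda$ and using $u^2 \ge 2\sigma^2 \log n + \lambda^2$ (the cross term being nonnegative) collapses the prefactor: $n\, e^{-u^2/(2\sigma^2)} \le n \cdot \tfrac1n\, e^{-\lambda^2/(2\sigma^2)} = e^{-\lambda^2/(2\sigma^2)}$, as claimed.

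The main obstacle is purely the bookkeeping of constants rather than any conceptual difficulty: the target constant $2\sigma^2 \log n$ and the $e^{-\lambda^2/(2\sigma^2)}$ tail are tied to the precise normalization of the sub-Gaussian parameter, so the free variable $s$ (equivalently $\lambda$) must be chosen to respect the admissibility constraint $s \le 1/\sigma^2$ from the paper's definition while still extracting the sharp logarithmic factor. Once the normalization is fixed, everything reduces to a routine combination of monotonicity of $\exp$, the union bound, and Jensen's inequality, exactly along the lines of Lemmas 5.1--5.2 in \citep{van2014probability}.
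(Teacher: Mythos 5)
First, a caveat on the comparison itself: the paper does not prove this lemma --- it is imported verbatim from \citet{van2014probability} --- so there is no in-paper proof to measure you against, and your attempt must be judged on its own terms. Your first two steps are sound. The pointwise-plus-Jensen argument for $\rb{\Exp{\max_i \xi_i}}^2 \leq \Exp{\max_i \xi_i^2}$ is correct, and the Chernoff--Jensen chain $\Exp{\max_i \xi_i^2} \leq \sigma^2 + \log(n)/s$ with $s = 1/\sigma^2$ gives $\sigma^2(1+\log n) \leq 2\sigma^2 \log n$ precisely when $n \geq 3$. Your disclosure of this restriction is not a cosmetic caveat but necessary: under the paper's squared-exponential convention the stated bound is genuinely false at $n=1$, since the deterministic variable $\xi \equiv \sigma$ satisfies $\Exp{\exp(\lambda^2\xi^2)} = \exp(\lambda^2\sigma^2)$ (so it is $\sigma$-sub-Gaussian in the paper's sense) yet $\Exp{\xi^2} = \sigma^2 > 0 = 2\sigma^2\log 1$. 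Since the lemma is only invoked with $n = 2d$, the restriction is harmless.

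The genuine gap is in your tail bound. The per-coordinate estimate $\Pr(\xi_i \geq u) \leq e^{-u^2/(2\sigma^2)}$ \emph{for all} $u \geq 0$ is false under the paper's definition and cannot be obtained, as you claim, ``directly via Markov applied to $e^{s\xi_i^2}$.'' The same example $\xi \equiv \sigma$ has $\Pr(\xi \geq u) = 1 > e^{-u^2/(2\sigma^2)}$ for all $0 < u \leq \sigma$. Indeed, Markov on $e^{s\xi^2}$ only yields $\Pr(\xi \geq u) \leq e^{s\sigma^2 - s u^2}$ for $s \in (0, 1/\sigma^2]$; the optimal choice $s = 1/\sigma^2$ gives $e^{1 - u^2/\sigma^2}$, which is below $e^{-u^2/(2\sigma^2)}$ only when $u \geq \sqrt{2}\,\sigma$. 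The constant-one Chernoff tail you invoke is a theorem for \emph{centered} variables under the MGF convention $\Exp{e^{\lambda X}} \leq e^{\lambda^2\sigma^2/2}$ (which is van Handel's setting), whereas the paper's convention admits non-centered variables, for which it fails near the origin. Fortunately your argument is repairable without leaving your framework, because the lemma only evaluates the tail at $u = \sqrt{2\sigma^2\log n} + \lambda \geq \sqrt{2\sigma^2\log n} \geq \sqrt{2}\,\sigma$ once $n \geq 3$: keep the honest union bound $\Pr\rb{\max_i \xi_i \geq u} \leq n\, e^{1 - u^2/\sigma^2}$, use $u^2 \geq 2\sigma^2\log n + \lambda^2$, and conclude $n\, e^{1 - 2\log n - \lambda^2/\sigma^2} = (e/n)\, e^{-\lambda^2/\sigma^2} \leq e^{-\lambda^2/(2\sigma^2)}$ for $n \geq 3$. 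You should either make this repair explicit or state the lemma under the centered MGF convention; as written, the per-coordinate step fails.
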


\end{document}